\documentclass[
final
, nomarks
]{dmtcs-episciences}


\usepackage[utf8]{inputenc}
\usepackage{subfigure}

%

\usepackage[round]{natbib}

\usepackage{environ}
\usepackage{tabularx}
\usepackage{amsthm}
\usepackage{thm-restate}
\usepackage{amsmath}

\newcommand{\hide}[1]{}

\newcommand{\R}{\mathbf{r}}
\newcommand{\B}{\mathbf{b}}

\newcommand{\LL}{\mathcal{L}}
\newcommand{\HH}{\mathcal{H}}
\newcommand{\KK}{\mathcal{K}}
\newcommand{\OO}{\mathcal{O}}

\newcommand{\VB}{\textsc{VB}}

\newcommand{\E}{\mathbb{E}}
\newcommand{\depth}{\textsc{depth}}
\newcommand{\Spl}{\textrm{Spl}}

\newcommand{\cp}{\ensuremath{\textrm{cap}}}

\newtheorem{theorem}{Theorem}

\newtheorem{lemma}[theorem]{Lemma}

\newtheorem{definition}[theorem]{Definition}
\newtheorem{observation}[theorem]{Observation}
\newtheorem{proposition}[theorem]{Proposition}

\makeatletter
\newcommand{\problemtitle}[1]{\gdef\@problemtitle{#1}}
\newcommand{\probleminput}[1]{\gdef\@probleminput{#1}}
\newcommand{\problemquestion}[1]{\gdef\@problemquestion{#1}}
\NewEnviron{problem}{
  \problemtitle{}\probleminput{}\problemquestion{}
  \BODY
  \par\addvspace{.5\baselineskip}
  \noindent
  \begin{tabularx}{\textwidth}{@{\hspace{\parindent}} l X c}
    \multicolumn{2}{@{\hspace{\parindent}}l}{\@problemtitle} \\
    \textbf{Input:} & \@probleminput \\
    \textbf{Question:} & \@problemquestion
  \end{tabularx}
  \par\addvspace{.5\baselineskip}
}
\makeatother

\newtheoremstyle{case}{}{}{}{}{}{:}{ }{}
\theoremstyle{case}

\author[R. Raman and K. Singh]{Rajiv Raman
  \and Karamjeet Singh
  }
\title{On Supports for Graphs of Bounded Genus\thanks{A preliminary version of these results appeared in EUROCOMB'23~\cite{raman2023hypergraph}.}}
\affiliation{
  Indraprastha Institute of Information Technology, Delhi, India.}
\keywords{hypergraph supports, bounded genus, packing and covering, PTAS}
\begin{document}



\publicationdata{vol. 28:2}{2026}{28}{10.46298/dmtcs.16086}{2025-07-22; 2025-07-22; 2026-03-13}{2026-05-02}

\maketitle
\renewcommand{\thefootnote}{\arabic{footnote}}
\begin{abstract}
 Let $(X,\mathcal{E})$ be a hypergraph. A support is a graph $Q$ on $X$ such that for each
$E\in\mathcal{E}$, the subgraph of $Q$ induced on the elements in $E$ is connected. 
We consider hypergraphs defined by connected subgraphs of a host graph.
For a graph $G=(V,E)$, let $\B(V)\subseteq V$ denote a set of \emph{terminals}.
Given a collection $\mathcal{H}$ of connected subgraphs of $G$, 
we define a hypergraph on $\B(V)$, where each $H\in\mathcal{H}$ defines
a hyperedge $V(H)\cap\B(V)$. Our goal is to construct a graph $Q$ on $\B(V)$
so that for each $H\in\mathcal{H}$, $V(H)\cap\B(V)$ induces a connected
subgraph of $Q$.

We also consider the problem of constructing a support for the \emph{dual hypergraph} - a hypergraph on $\mathcal{H}$ where each 
$v\in \B(V)$ defines
a hyperedge consisting of the subgraphs in $\mathcal{H}$ containing $v$.
In fact, we construct supports for a common generalization
of the primal and dual settings called the 
\emph{intersection hypergraph}.

As our main result, we show that if the host graph $G$ 
has genus $g$ and the subgraphs in $\mathcal{H}$ satisfy a 
condition of being \emph{cross-free}, then there exists a support
that also has genus at most $g$.

Our results are a generalization of the results of Raman and Ray (Rajiv Raman, Saurabh Ray:
Constructing Planar Support for Non-Piercing Regions. Discret. Comput. Geom. 64(3): 1098-1122 (2020)) and our techniques extend their results from
the planar setting to graphs on surfaces. In particular,
our techniques imply a unified analysis for packing and covering problems
for hypergraphs defined on surfaces of bounded genus. We also describe
applications of our results for hypergraph colorings.
\end{abstract}

\section{Introduction}\label{sec:intro}
A hypergraph $(X,\mathcal{E})$ consists of a set $X$ and a collection $\mathcal{E}$ of subsets of $X$ called \emph{hyperedges}.
Hypergraphs generalize graphs (where $\mathcal{E}$ consists only of 2-element subsets of $X$). 
We are interested in questions of representing hypergraphs by \emph{sparse} graphs that capture some of its structure. 
The notion of structure we want to preserve is that of connectedness of the hyperedges.

Consider a hypergraph $(X,\mathcal{E})$. A graph $Q$ on $X$ is a \emph{support} for the hypergraph if
for each $E\in\mathcal{E}$, the induced graph $Q[E]$ of $Q$ is connected. 
If we ignore the sparsity, we can trivially construct a support by taking a complete graph on $X$. The problems become
non-trivial if we impose sparsity. For example, if we require the support graph to be planar, then even
deciding if a hypergraph admits such a support is NP-hard due to a result of~\cite{Johnson1987Hypergraph}.

The notion of a support was introduced by~\cite{voloshina1984planarity} in the context of VLSI circuits (see~\cite{Feinberg1997} for an English version), and
has since been rediscovered by several communities. For example, the existence and construction of sparse supports arises naturally
in the problem of visualizing hypergraphs, see~\cite{bereg2015colored,bereg2011red,brandes2010blocks,brandes2012path,buchin2011planar,havet2022overlaying,hurtado2018colored},
and in the design of networks, see~\cite{chen2015polynomial,hosoda2012approximability,korach2003clustering,onus2011minimum}.

Since it is already NP-hard to decide if a hypergraph admits a support that is a planar graph, it is natural to consider restricted settings that ensure the existence of a sparse support.~\cite{RR18} considered hypergraphs defined by
\emph{non-piercing regions}\footnote{A slightly imprecise definition is the following: a family $\mathcal{R}$ of path connected regions in the plane or on an oriented surface such that for any two regions $R, R'\in\mathcal{R}$, the regions
$R\setminus R'$ and $R'\setminus R$ are also connected are called non-piercing regions. 
They include disks, pseudodisks, unit height-axis parallel rectangles, etc. See Def. \ref{def:npregions}.} in the plane (see Section~\ref{sec:regionsonsurfaces}
for a definition of these hypergraphs). The authors showed the existence
of planar \emph{primal, dual} and \emph{intersection} supports 
for these hypergraphs. 

In~\cite{RR18}, the authors were motivated by the analysis of algorithms
for Packing and Covering problems for hypergraphs defined by geometric
regions in the plane. Several such problems admit PTAS via algorithms from a basic local-search framework.
To show that the algorithms in this framework are a PTAS requires showing the existence
of a \emph{local-search graph}.  The existence of a support in many cases
directly implies the existence of such a local-search graph (though
in some cases, some extra work is required).

We introduce the notion of hypergraphs defined by connected subgraphs
of a host graph. This yields a cleaner combinatorial model to study
the geometric hypergraphs considered in~\cite{RR18}.
To see this, consider an arrangement of disks in the plane.
The \emph{dual arrangement graph} $G$ corresponding to the disks is a plane graph that has a vertex for each \emph{cell} of the arrangement, 
and two vertices in $G$ are adjacent if and only if the corresponding cells are separated by the boundary of a disk.
Each disk then defines a connected subgraph of $G$ - the subgraph induced on the cells contained in that disk. 
The subgraphs defined by any pair of disks are \emph{non-piercing} in the
graph-theoretic sense.
That is, if $G$ is the dual arrangement graph of a set of disks, and $A$ and $B$ are two subgraphs of $G$ corresponding to disks $R_A$ and $R_B$ respectively, 
then the
induced subgraphs on $V(A)\setminus V(B)$ and $V(B)\setminus V(A)$
are connected subgraphs of $G$.
If we are given a set $P$ of points and a set $D$ of disks, this defines a hypergraph $(P,\mathcal{D})$ on $P$ where each $D\in\mathcal{D}$ defines a hyperedge $D\cap P$.
For each cell containing a point of $P$,
we choose a representative point of $P$ from this cell as its representative vertex, and assign it a color $blue$.
Otherwise, the vertex corresponding to the cell is colored \emph{red}. 
A planar support for the hypergraph defined by blue vertices and disks then easily yields a planar support for the hypergraph $(P,\mathcal{D})$. See Fig.~\ref{fig:disksandpoints} for the construction of
a graph and a support.
This approach enables the study of geometric hypergraphs entirely within a graph-theoretic framework.

\begin{figure}[ht!]
\centering
\begin{minipage}[t]{0.3\textwidth}
\centering
\includegraphics[scale=.5]{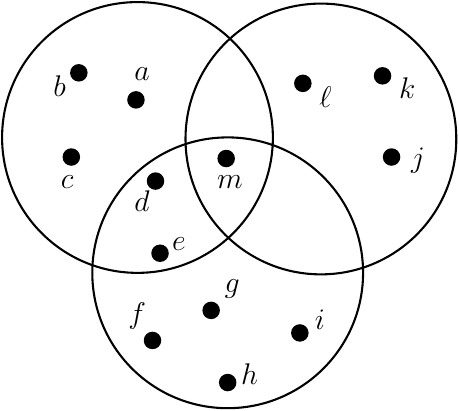}
 \\
 (a) Disks and Points
\label{fig:disksAndPoints}
\end{minipage}
\hfill
\begin{minipage}[t]{0.4\textwidth}
\centering
\includegraphics[scale=.5]{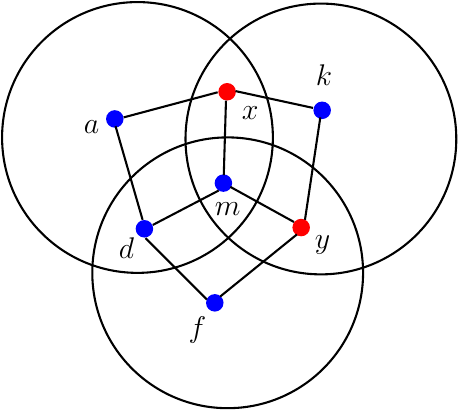}
\\
(b) Dual arrangement graph
\label{fig:DualArrangemenGraph}
 \end{minipage}\\
 \vspace{1cm}
 \begin{minipage}[t]{0.35\textwidth}
 \centering
\includegraphics[scale=.56]{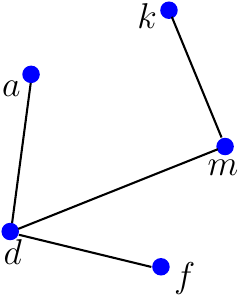}
\\
(c) Support on blue points for hypergraph in (b)
\label{fig:SupportOnBlue}
 \end{minipage}
 \hfill
 \begin{minipage}[t]{0.35\textwidth}
 \centering
\includegraphics[scale=.55]{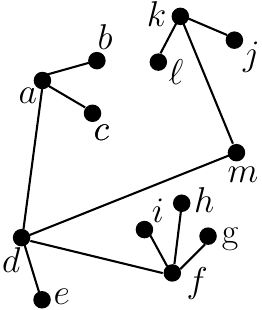}
\\
(d) Support for the hypergraph defined in (a)
\label{fig:SupportForHypergraph}
 \end{minipage}
 \caption{Support for hypergraph defined by disks and points in the plane.}\label{fig:disksandpoints}
 \end{figure}

To extend the results from the plane to other surfaces, we introduce the notion of \emph{cross-free} subgraphs of a graph
embedded on a surface of genus $g$.
We show that if the subgraphs of a host
graph are cross-free, then the \emph{primal, dual} and \emph{intersection}
hypergraphs admit a support of genus at most $g$. 

Our results on the existence of supports of bounded genus implies, via the
local-search framework, 
PTAS for the problems studied by~\cite{RR18} generalized to surfaces of bounded genus.
We also show that the non-piercing condition is not sufficient to guarantee
PTAS for hypergraphs on surfaces of bounded genus by showing an 
APX-hardness result for a covering problem defined by non-piercing regions on a torus. 

Besides the applications mentioned above, the existence of supports
are also useful for some coloring problems on hypergraphs. 
Finally, one can view the existence of a sparse support as a notion of sparsity for hypergraphs.
Thus, we could say that a given hypergraph is \emph{planar} if it admits a planar support, and
has bounded genus if it admits a support of bounded genus. While it may be hard to characterize
exactly the class of hypergraphs that are \emph{planar} in our sense, it is still interesting
to identify a large class of naturally defined hypergraphs that are indeed \emph{planar} or have
\emph{bounded genus}.
We can then view our result as identifying a class of naturally defined \emph{bounded genus}
hypergraphs.

\subsection{Problems studied}
\label{sec:problems}
The general class of problems we study are the following.
Let $G=(V,E)$ be a graph of genus $g$, with a set $\B(V)\subseteq V$ of \emph{terminals}. The remaining vertices, denoted $\R(V)$ are
the \emph{non-terminals}.
Let $\mathcal{H}$ be a collection of connected subgraphs of $G$. 
We denote by $(\B(V),\HH)$, the hypergraph on $\B(V)$ defined by $\HH$,
where each $H\in\HH$ defines a hyperedge $\B(V)\cap V(H)$. 
The question we study is whether the hypergraph $(\B(V),\HH)$ admits a support $Q$ on $\B(V)$
so that $Q$ also has genus at most $g$. 
We call this problem the \emph{primal support}
problem to distinguish it from the other notions of support we study.

\begin{problem}
  \problemtitle{Primal Support}
  \probleminput{A graph $G=(V,E)$ of genus $g$, $c:V\to\{\R,\B\}$, and a collection $\mathcal{H}$ of connected subgraphs of $G$.}
  \problemquestion{Is there a support $Q$ on $\B(V)$ of genus $g$, i.e., for each $H\in\mathcal{H}$,
  $H\cap\B(V)$ induces a connected subgraph in $Q$.}
\end{problem}

We also study the question of construction of a support for the
\emph{dual hypergraph} defined on the same input.
The dual hypergraph $(\mathcal{H},\{\mathcal{H}_b\}_{b\in \B(V)})$
is the hypergraph on $\mathcal{H}$, where each $b\in \B(V)$ defines a hyperedge $\mathcal{H}_b=\{H\in\mathcal{H}: b\in V(H)\}$
consisting of all $H\in\mathcal{H}$ containing $b$. A support
for the dual hypergraph is called a \emph{dual support}.
For the dual support, observe that we can assume $\B(V)=V$, as such a support
is also a support when $\B(V)$ is a proper subset of $V$.

\begin{problem}
      \problemtitle{Dual Support}
  \probleminput{A graph $G=(V,E)$ of genus $g$, and a collection $\HH$ of connected subgraphs of $G$.}
  \problemquestion{Is there a dual support $Q^*$ of genus $g$ on $\mathcal{H}$, i.e., for each $v\in V(G)$,
  $\HH_v=\{H\in\mathcal{H}: v\in H\}$ induces a connected subgraph of $Q^*$.}
\end{problem}

We now define \emph{intersection hypergraphs} that are a common generalization 
of primal and dual hypergraphs.
Let $G=(V,E)$ be a graph and let $\mathcal{H}$ and $\mathcal{K}$
be two families of connected subgraphs of $G$. 
An \emph{intersection hypergraph} is the hypergraph 
$(\mathcal{H},\{\mathcal{H}_K\}_{K\in\mathcal{K}})$, i.e., a 
hypergraph on $\mathcal{H}$ such that each $K\in\mathcal{K}$ defines
a hyperedge $\HH_K$ consisting of the subgraphs in $\mathcal{H}$ intersecting a
vertex of $K$. 
As in the dual setting,
choosing a subset of $\mathcal{H}$ as \emph{terminals} does not
add any additional complication, and hence we define the
intersection hypergraph on all of $\mathcal{H}$.
Setting either $\mathcal{H}$ or
$\mathcal{K}$ as singleton vertices yields respectively, the
primal and dual hypergraphs.
An \emph{intersection support} is a
support for the intersection hypergraph. 

\begin{problem}
      \problemtitle{Intersection Support}
  \probleminput{A graph $G=(V,E)$ of genus $g$, and two collections $\mathcal{H}$ and $\mathcal{K}$ of connected subgraphs of $G$.}
  \problemquestion{Is there an intersection support $\tilde{Q}$ on $\mathcal{H}$ of genus $g$, 
  i.e., for each $K\in\mathcal{K}$, the set $\HH_K=\{H\in\mathcal{H}: V(H)\cap V(K)\neq\emptyset\}$ induces a connected subgraph of $\tilde{Q}$.}
\end{problem}

Our main result is the following: We show that if the host graph has genus $g$ and the subgraphs in $\mathcal{H}$ and $\mathcal{K}$ are \emph{cross-free}, 
then there is an intersection support of genus $g$. 
This implies an equivalent result for the primal and dual hypergraphs. Fig.~\ref{fig:primaldualint} shows examples
of primal, dual and intersection supports.

\begin{figure}[ht!]
\centering
\begin{minipage}{0.3\textwidth}
\begin{center}
\includegraphics[scale=.5]{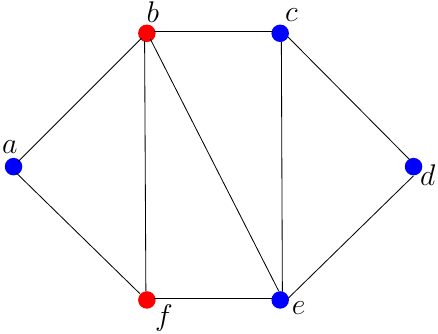}
\vspace{1mm}\\
(a) Primal hypergraph
 \label{fig:primalhyp}
\end{center}
\end{minipage}
 \begin{minipage}{0.3\textwidth}
 \begin{center}
\includegraphics[scale=.5]{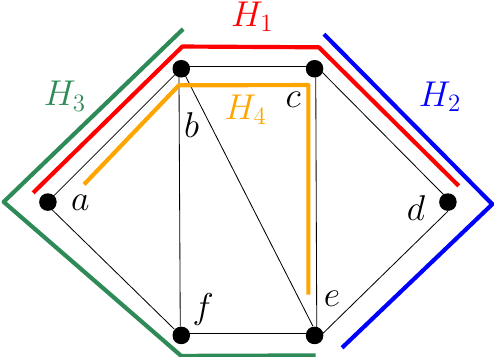}\\
(b) Dual hypergraph
\label{fig:dualhyp}
\end{center}
 \end{minipage}
 \begin{minipage}{0.35\textwidth}
 \begin{center}
\includegraphics[scale=.5]{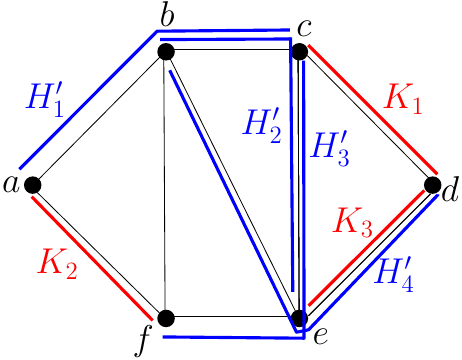}\\
(c) Intersection hypergraph
\label{fig:inthyp}
\end{center}
 \end{minipage}
 \vspace{5mm}\\
\begin{minipage}{0.27\textwidth}
\begin{center}
\includegraphics[scale=.55]{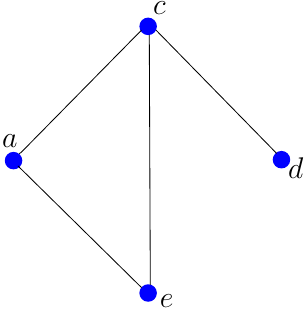}
\vspace{-1mm}\\
(d) Primal support
\label{fig:primalsup}
\end{center}
 \end{minipage}
 \begin{minipage}{0.33\textwidth}
 \begin{center}
\includegraphics[scale=.55]{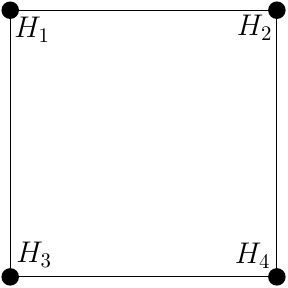}\\
(e) Dual support
\label{fig:dualsup}
\end{center}
 \end{minipage}
 \begin{minipage}{0.33\textwidth}
 \begin{center}
\includegraphics[scale=.55]{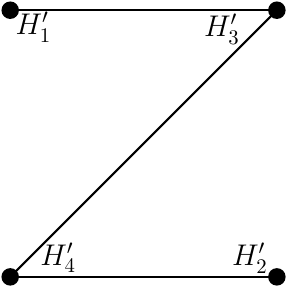}\\
(f) Intersection support
\label{fig:intsup}
\end{center}
 \end{minipage} 
 \caption{(a) and (b): Primal and Dual hypergraphs on the graph system $(G,\HH)$ where $G$ is the graph on vertices $\{a,b\ldots,f\}$, and $\HH=\{H_1,\ldots H_4\}$ with $H_1=\{a,b,c,d\}$, $H_2=\{c,d,e\}$, $H_3=\{a,b,f,e\}$, $H_4=\{a,b,c,e\}$; (c): Intersection hypergraph for $(G,\HH',\KK)$ with $\HH'=\{H'_1,\ldots,H'_4\}$ and $\KK=\{K_1,\ldots,K_3\}$, where $H'_1=\{a,b,c\}$, $H'_2=\{b,c,e\}$, $H'_3=\{c,e,f\}$, $H'_4=\{b,e,d\}$, and $K_1=\{c,d\}$, $K_2=\{a,f\}$, $K_3=\{d,e\}$. Figures (d)-(f) show the respective support graphs. Note: in (f), the pair of subgraphs $H'_3,H'_4$ is not \emph{cross-free} at vertex $e$ (see Definition \ref{def:cross_free}).}
 \label{fig:primaldualint}
 \end{figure}

\subsection{Related Work}
The notion of a planar analogue of a hypergraph was first suggested by~\cite{zykov}, who defined a hypergraph to be planar if there is a plane graph on the vertices of the hypergraph such that for each hyperedge, there
is a bounded face of the embedding containing only the elements of this hyperedge. Equivalently, a hypergraph is \emph{Zykov-planar} iff its incidence bipartite graph is planar.~\cite{voloshina1984planarity} introduced the notion of hypergraph planarity in the context of planarizing VLSI circuits (see the monograph~\cite{feinberg2012vlsi} and
references therein).
The notion is now called \emph{a planar support}.~\cite{Johnson1987Hypergraph} showed that the problem of deciding if a hypergraph has a planar support is NP-hard.

Since then, several authors have studied the question of deciding if a hypergraph admits a support from a restricted family of graphs.~\cite{tarjan1984simple} showed that we can decide in linear time if a hypergraph admits a tree support.~\cite{buchin2011planar} showed polynomial time algorithms to decide if a hypergraph admits a support that is a path, a cycle, or a tree with bounded maximum degree.
Further, the authors sharpen the result of~\cite{Johnson1987Hypergraph} by showing that deciding if a hypergraph admits a support that is a 2-outerplanar\footnote{A graph is 2-outerplanar if the graph can be embedded in the plane such that the vertices are on two concentric circles and removing all vertices of outer face results in an outerplanar graph.} graph is NP-hard.
The problems of constructing a support with the fewest number of edges, or with minimum maximum degree have also been studied in~\cite{baldoni2007tera,onus2011minimum}.

As stated earlier, the existence of supports and related notions have been used in the analysis of algorithms under a local-search framework
for packing and covering problems on geometric hypergraphs.~\cite{ChanH12} gave a PTAS for the Independent
Set problem for an arrangement of pseudodisks in the plane.~\cite{mustafa2010improved} gave a PTAS for the Hitting Set problem for a set of points and pseudodisks in the plane
(both results apply more generally to \emph{k-admissible regions} \footnote{A set of connected bounded regions $\mathcal{R}$ in the plane, each of whose boundary is a simple Jordan curve is $k$-admissible (for even $k$) if for any $R,R'\in\mathcal{R}$, $R\setminus R'$ and $R'\setminus R$ are connected, their boundaries are in general position, and 
intersect each other at most $k$ times. If $k=2$, the regions are called pseudodisks.} \cite{whitesides1990k}). 
The results of~\cite{ChanH12,mustafa2010improved} were extended by~\cite{BasuRoy2018} who 
used the same framework to design a PTAS for the Set Cover and Dominating Set problems for the intersection graph of pseudodisks.~\cite{RR18} showed the existence of a planar support for intersection hypergraphs of non-piercing regions
(see Section~\ref{sec:applications} for the definition), and their result gave a unified analysis for all packing and covering problems described
above. 
The local-search framework has also found applications in variants of the art gallery/terrain guarding problem,
where the geometric objects are defined implicitly as the visibility region of a guard, and the regions to be guarded are continuous regions, see for example~\cite{krohn2014guarding, DBLP:conf/wads/BandyapadhyayR17}.

~\cite{DBLP:journals/dcg/RamanR22}, showed that  
LP-rounding can be combined with a local-search framework to obtain 
$(2+\epsilon)$-approximations for the problem of Hitting Set and Set Cover with demands for hypergraphs defined on
points and pseudodisks. 

~\cite{CabelloG14} showed that Independent Set and Vertex Cover admit a PTAS on graphs excluding a fixed minor, and~\cite{DBLP:conf/walcom/AschnerKMY13} studied some packing and covering problems involving geometric non-planar graphs. 
Unlike their results, for the problems we study, we require additional work to show the existence of an appropriate local-search graph and this,
in some cases requires the construction of a support graph.

Besides packing and covering  problems, we also consider coloring problems on hypergraphs.~\cite{KellerS18} considered intersection hypergraphs of two families of disks, and showed that
such hypergraphs admit a \emph{conflict-free coloring}\footnote{In a conflict-free coloring, we want to color the vertices of the hypergraph
such that each hyperedge has a uniquely colored vertex.} with $O(\log n)$ colors.~\cite{Keszegh20} generalized their result to intersection hypergraphs of two families of pseudodisks and
showed that these hypergraphs admit a conflict-free coloring with $O(\log n)$ colors. The result holds for any hereditary family of hypergraphs if each hypergraph in the family admits a proper coloring using a constant number of colors, see~\cite{even_conflictfree}.
Therefore, for certain geometric hypergraphs defined on a surface of constant genus, our result in Section \ref{sec:color_geom} generalize the previous work of~\cite{even_conflictfree,KellerS18,Keszegh20,RR18}.

\section{Preliminaries}
\label{sec:preliminaries}

A graph $G=(V,E)$ and a collection of subgraphs $\mathcal{H}$ of $G$ naturally defines a hypergraph 
$(V(G),\{V(H): H\in\mathcal{H}\})$, where $V(H) = \{v\in V: v\in H\}$. We call the tuple $(G,\mathcal{H})$ a \emph{graph system} which we use while considering primal and dual hypergraphs.
Similarly, given two collections $\mathcal{H}$ and $\mathcal{K}$ of connected subgraphs of $G$, we call $(G,\mathcal{H},\mathcal{K})$
an \emph{intersection system}, and we use it while considering an intersection hypergraph. We implicitly make the assumption that $G$ is a connected graph.
We assume throughout that the subgraphs of
the host graph are connected, induced subgraphs.
Note that assuming that the subgraphs are induced is without loss of generality
as the hypergraphs are defined on the vertices of $G$.
We also assume that the graph $G$ is given along with a \emph{2-cell embedding} in an oriented surface. Below, we define these notions.

\begin{definition}[Embedding of a graph]
A graph $G$ is said to be \emph{embedded} on a surface $\Sigma$ if the vertices of $G$ are distinct points on $\Sigma$ and each edge of $G$ is a simple arc lying on $\Sigma$ whose endpoints are the vertices of the edge, and such that its interior is disjoint from other edges and vertices.
A 2-cell embedding is an embedding of a graph on a surface, where each face is homeomorphic to a disk in the plane.
\end{definition}

We say that a graph $G$ has an \emph{embedding} on a surface $\Sigma$ if there is a graph $G'$ embedded on $\Sigma$ such that $G'$ is isomorphic to $G$.
An orientable surface has genus $g$ if it is obtained from a sphere by adding $g$ \emph{handles} (See~\cite{Mohar2001GraphsOS}, Chapter 3).

\begin{definition}[Genus] The \emph{genus} $g$ of a graph $G$ is the minimum genus of an oriented surface $\Sigma$ so that $G$ has an embedding on $\Sigma$.

\end{definition}
We say that a graph has bounded genus if it can be embedded on a surface whose genus is bounded. 
It should be noted that contracting any edge of a graph does not increase the genus of the resulting graph and we will use this fact throughout the paper.

For a graph $G=(V,E)$ a set $S\subset V$ is a \emph{separator} if $G\setminus S$ consists of two disjoint components $A$ and $B$.
A separator is said to have \emph{sub-linear size} if $|S|=O(n^{\delta})$ for a constant $\delta<1$. A separator is said to be \emph{balanced}
if $|A|, |B|\le\alpha|V|$ for some constant $\alpha < 1$. We also need the notion of a \emph{weighted separator}. Let $w:V\to\mathbb{R}_{\ge 0}$
be a weight function on the vertices. A separator $S$ is said to be balanced if $w(A), w(B)\le \alpha w(V)$, where $w(X)=\sum_{v\in X} w(v)$.
For the separator $S$, its size is still its cardinality.
The key property of graphs of bounded genus we use is that they have sub-linear sized balanced separators. In the rest of the paper, since we only
deal with sub-linear balanced separators, we will just use the term separator to mean a sub-linear sized, balanced separator.

\begin{theorem}[\cite{gilbert1984separator}]
\label{thm:gilbert}
Let $G=(V,E)$ be an $n$-vertex graph of genus $g$. Let $w:V\to\mathbb{R}_{\ge 0}$ be a weight function.
Then, $G$ has a balanced separator of size $O(\sqrt{gn})$ such that $w(A), w(B) \le \frac{1}{2}w(V)$.
\end{theorem}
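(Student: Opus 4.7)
The plan is to reduce the problem to the Lipton--Tarjan planar separator theorem by first cutting $G$ into a planar graph along a small number of short cycles. Fix a 2-cell embedding of $G$ in an oriented surface $\Sigma_g$ of genus $g$. Each non-contractible simple cycle removed from the surface drops the total genus (summed over the resulting components) by at least one, so deleting the vertices of $g$ suitably chosen non-contractible cycles yields a residual graph of genus zero, i.e.\ a planar graph. My first task is therefore to realize such a family of cuts using only $O(\sqrt{gn})$ vertices in total, and my second task is to apply planar separators to what remains.

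The standard way to obtain short non-contractible cycles is via BFS. Run BFS from an arbitrary vertex to build a spanning tree $T$. Any non-tree edge $uw$ together with the tree path between its endpoints yields a fundamental cycle of length at most $2\cdot\mathrm{depth}(T)+1$, and whenever the current surface still has positive genus at least one non-tree edge must close a non-contractible such cycle (otherwise the embedding would be planar). To push the BFS depth down to $O(\sqrt{n/g})$ I would invoke the Lipton--Tarjan layering trick: locate two BFS levels $L_i,L_j$ with $j-i\le\sqrt{n/g}$ and $|L_i|+|L_j|=O(\sqrt{n/g})$; either $L_i\cup L_j$ is already a balanced separator of the claimed size, or a single one of the three pieces (above $L_i$, between $L_i$ and $L_j$, below $L_j$) carries most of the weight and has effective radius $O(\sqrt{n/g})$, and I rerun BFS inside that piece from a centrally located root. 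A fundamental non-contractible cycle then has at most $O(\sqrt{n/g})$ vertices.

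Iterating this extraction $g$ times, once per genus reduction, deletes $O(g\cdot\sqrt{n/g})=O(\sqrt{gn})$ vertices and leaves a planar residual graph $G'$. Applying the weighted Lipton--Tarjan theorem to $G'$, with weight $0$ assigned to the deleted vertices and $w$ preserved on the rest, produces a balanced separator $S'\subseteq V(G')$ of size $O(\sqrt{n})$ satisfying $w(A),w(B)\le\tfrac{1}{2}w(V)$. The union of $S'$ with the vertices of the $g$ removed cycles is a balanced separator of $G$ of total size $O(\sqrt{gn})+O(\sqrt{n})=O(\sqrt{gn})$, giving the claim. The main obstacle is certifying that each fundamental cycle extracted from the BFS tree is genuinely non-contractible in the current residual surface, so that the genus actually drops at every step; the standard fix is to maintain a partial basis of $H_1(\Sigma_g;\mathbb{Z}/2)$ obtained from the rotation system of the embedding and greedily accept only those fundamental cycles that are independent modulo the previously removed ones, a test that is purely combinatorial once the embedding is fixed.
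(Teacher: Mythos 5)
You should first note that the paper contains no proof of this statement: Theorem~\ref{thm:gilbert} is imported directly from Gilbert, Hutchinson and Tarjan \cite{gilbert1984separator}, so there is no in-paper argument to compare against. Your sketch is an attempt to reprove the cited result, and it does follow the broad strategy of the known proofs (planarize by cutting short topologically nontrivial cycles, then invoke the weighted planar separator theorem), but the quantitative heart of the argument has a genuine gap.

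The gap is in the layering step. You claim two BFS levels $L_i,L_j$ with $j-i\le\sqrt{n/g}$ and $|L_i|+|L_j|=O(\sqrt{n/g})$. Pigeonhole only guarantees, inside a window of $t$ consecutive levels, some level of size at most $n/t$; with $t=\sqrt{n/g}$ this is $\sqrt{gn}$, not $\sqrt{n/g}$, and indeed if every level has size $\sqrt{n}$ while $g$ is polynomially large there is no level of size $O(\sqrt{n/g})$ at all. You cannot afford levels of size $\sqrt{gn}$ in each of the $g$ genus-reducing iterations, and after deleting a cycle and rerunning BFS inside a piece there is no reason the new radius stays $O(\sqrt{n/g})$ without paying for fresh levels again, so as written the bound degrades to roughly $g\sqrt{n}$ or worse. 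The standard repair is structurally different: delete \emph{one} residue class of BFS levels modulo $t\approx\sqrt{n/g}$ (a single payment of $n/t\approx\sqrt{gn}$), which simultaneously confines every remaining chunk to fewer than $t$ consecutive levels; then planarize each chunk in one shot using the at most $2g_i$ fundamental cycles of a tree--cotree decomposition of its (radius-$O(t)$) BFS tree, with $\sum_i g_i\le g$ by additivity of genus over vertex-disjoint subgraphs, and only then apply the weighted planar separator theorem. Two further inaccuracies should be fixed: cutting along a noncontractible but \emph{separating} cycle does not decrease total genus (the two sides have genera summing to $g$), so you must insist on nonseparating, i.e.\ $\mathbb{Z}/2$-homologically nontrivial, cycles from the outset rather than patching this in at the end (after deletions the residual embedding need not be $2$-cell, and the existence of a short independent fundamental cycle is exactly what must be proved); and the weighted Lipton--Tarjan theorem yields parts of weight at most $\tfrac{2}{3}w(V)$, not $\tfrac{1}{2}$, so the stated balance requires the usual additional balancing step.
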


For a vertex $v\in V(G)$, we use $N_G(v)$ to denote the neighbors of $v$ in $G$ 
(or just $N(v)$ if $G$ is clear from context). 
We use $u\sim v$ or $e\sim v$ to denote respectively, if $u$ and $v$ are adjacent in $G$, 
or an edge $e$ incident to vertex $v$. For $S \subseteq V$, we use $G[S]$ to denote the subgraph of $G$ induced on $S$.
We use $\mathcal{H}_v = \{H\in\mathcal{H}: v\in H\}$.
Similarly, let $\mathcal{H}_e=\{H\in\mathcal{H}: e\in H\}$.
We let $\depth(v)=|\{H\in\mathcal{H}: v\in H\}|$. Similarly, let $\depth(e)=|\{H\in \mathcal{H}: e\in H\}|$.
We also use the notations $H\cap K$, $H\setminus K$, and $H\subseteq K$ to mean
$V(H)\cap V(K)$, $V(H)\setminus V(K)$, and $V(H)\subseteq V(K)$ respectively for any two subgraphs $H,K$ of a graph $G$.

Our goal is to consider restrictions on the subgraphs so that the support is guaranteed to have bounded genus. 
To that end, we introduce a notion of cross-free hypergraphs and non-piercing hypergraphs.

\begin{definition}[Reduced graph]\label{def:reducedgraph}
Let $(G,\mathcal{H})$ be a graph system. 
For two subgraphs $H,H'\in\mathcal{H}$, the \emph{reduced graph} $R_G(H,H')$ (or just $R(H, H')$ if $G$ is clear from context) is the graph obtained from $G$ by contracting all edges, both of whose end-points are in $H\cap H'$. 
\end{definition}

Note that if $G$ is embedded on a surface $\Sigma$, then this induces an embedding of $R_G(H,H')$ on $\Sigma$.

\begin{definition}[Cross-free at $v$]\label{def:cross_free}
Let $(G,\mathcal{H})$ be an embedded graph system. 
Two subgraphs $H,H'\in\mathcal{H}$ are said to be cross-free at a vertex $v\in V(H)\cap V(H')$ 
if the following holds: Consider the induced embedding of the reduced graph $R(H,H')$.
Let $\tilde{v}$ be the image of $v$ in $R(H,H')$. There are no 4 edges 
$e_i=\{\tilde{v},v_i\}$ in $R(H,H')$, $i=1,\ldots, 4$ incident to $\tilde{v}$ in cyclic order around $\tilde{v}$,
such that $v_1, v_3\in H\setminus H'$, and $v_2, v_4\in H'\setminus H$.
\end{definition}

For an embedded graph system $(G,\mathcal{H})$, if every pair $H, H'\in\mathcal{H}_v$ is cross-free at $v$, then
$(G,\mathcal{H})$ is said to be cross-free at $v$. For two subgraphs $H, H'$ if $v$ is not contained in both $H$ and $H'$,
then $H$ and $H'$ are assumed to be cross-free at $v$.  
The embedded graph system $(G,\mathcal{H})$ is cross-free if it is cross-free at all $v\in V(G)$.
Finally, a graph system $(G,\mathcal{H})$ is cross-free if there exists an embedding of $G$ 
such that the embedded graph system $(G,\mathcal{H})$ is cross-free with respect to $\mathcal{H}$.
If there exist $H, H'\in \mathcal{H}$ such that $H$ and $H'$ are not cross-free at $v$, we say that $H$ and $H'$ are \emph{crossing} at $v$.
Throughout the paper, 
when we say that $G$ is embedded on a surface, we assume without loss of generality that the embedding is cross-free with respect to $\mathcal{H}$.

An intersection hypergraph $(G,\mathcal{H},\mathcal{K})$ is cross-free
if there is an embedding of $G$ such that the embedded graph systems $(G,\mathcal{H})$ and $(G,\mathcal{K})$ are 
simultaneously cross-free.
Note that we can have $H\in\mathcal{H}$, $K\in\mathcal{K}$ that are crossing.
Finally, we use the term $(G,\mathcal{H})$ is a cross-free system of genus $g$ or the term
$(G,\mathcal{H},\mathcal{K})$ is a cross-free intersection system of genus $g$ to mean that the host
graph $G$ has genus $g$.

\begin{definition}[Non-piercing subgraphs]
\label{defn:nonpiercing}
A graph system $(G,\mathcal{H})$ with $\mathcal{H}$ a collection of subgraphs of $G$
is non-piercing if each $H\in\mathcal{H}$ is connected and for any two 
subgraphs $H, H'\in\mathcal{H}$, $H\setminus H'$ induces a connected
subgraph of $G$.
\end{definition}

Note that non-piercing is a purely combinatorial notion, 
and unlike the cross-free property above, it does not require an embedding of the graph. 
If $\exists\; H, H'\in\mathcal{H}$ such that either the induced subgraph $H\setminus H'$ or the induced subgraph $H'\setminus H$ is not connected, then we say that $H$ and $H'$ are \emph{piercing}.

\smallskip
\noindent

\smallskip
\noindent

\section{Contribution}
\label{sec:contribution}
The main result we show in this paper is that 
if $(G,\mathcal{H},\mathcal{K})$ is a cross-free intersection system of genus $g$, then there
is an intersection support $\tilde{Q}$ of genus at most $g$. 
In order to show that $\tilde{Q}$ exists, we show that primal and dual supports of genus at most $g$ exist for
cross-free graph systems of genus $g$. These results are a generalization of the results of~\cite{RR18}, 
who showed similar results for non-piercing regions in the plane. While some of the techniques in the two papers are
similar, several new ideas are required for the results to go through. Besides generalization to surfaces, 
our results have the advantage that they use basic graph operations and do not require delicate topological arguments
as in~\cite{RR18}. We also give applications of our results to several packing, covering problems and coloring problems.

\subsection{Existence of Supports}

We obtain the following results for the construction of primal, dual, and intersection supports for cross-free systems.

\begin{restatable}{thm}{primalsupport}
\label{thm:primalsupport}
Let $(G,\mathcal{H})$ be a cross-free graph system of genus $g$.
For any coloring $c:V(G)\to\{\R,\B\}$, there exists a primal support $Q$ of genus at most $g$ on $\B(V)$.
\end{restatable}

\begin{restatable}{thm}{dualembedded}
\label{thm:dualembedded}
Let $(G,\mathcal{H})$ be a cross-free graph system of genus $g$. Then, there exists a dual support $Q^*$ on $\mathcal{H}$
of genus at most $g$.
\end{restatable}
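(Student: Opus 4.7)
The plan is to construct the dual support $Q^*$ directly using the cross-free embedding of $G$ on $\Sigma$, rather than reducing to Theorem~\ref{thm:primalsupport} via an auxiliary graph. A naive reduction --- taking $G'$ to be the bipartite incidence graph on $V(G) \sqcup \mathcal{H}$ with edges $\{(v, v_H) : v \in V(H)\}$, colouring $\mathcal{H}$ blue and $V(G)$ red, and using the stars $\{v\} \cup \mathcal{H}_v$ as hyperedges --- would make a primal support on the blue side behave exactly like a dual support. However, the genus of $G'$ can strictly exceed $g$ (for instance, $G = K_4$ and $\mathcal{H} = \{G\}$ yields $G' = K_5$, of genus $1$), so this route does not preserve the genus bound and one must work on $\Sigma$ itself.

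I would fix a cross-free embedding of $G$ on $\Sigma$, take $V(Q^*) = \mathcal{H}$, and add edges as follows. For each $v \in V(G)$, the cross-free condition says that for every $H, H' \in \mathcal{H}_v$, the edges of $R_G(H,H')$ incident to the image $\tilde v$ that lie in $H \setminus H'$ occupy cyclically contiguous arcs, separated from the arcs of $H' \setminus H$. Pairwise non-interleaving for all of $\mathcal{H}_v$ induces a cyclic ordering $H^{v}_1, \ldots, H^{v}_{k_v}$ of $\mathcal{H}_v$ around $v$. Add an edge of $Q^*$ between each consecutive pair $\{H^{v}_i, H^{v}_{i+1 \bmod k_v}\}$; this already guarantees that $Q^*[\mathcal{H}_v]$ contains a spanning cycle and is therefore connected for every $v$.

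To embed $Q^*$ on $\Sigma$, I would place a representative point $p_H \in \Sigma$ for each $H \in \mathcal{H}$ --- say, by subdividing an interior edge of a spanning tree $T_H$ of $G[V(H)]$ --- and draw each edge $\{H^{v}_i, H^{v}_{i+1}\}$ as a curve that starts at $p_{H^v_i}$, runs along a thin finger following $T_{H^v_i}$ to $v$, traverses a short arc inside a small disk around $v$ respecting the local cyclic order, and then follows a finger of $T_{H^v_{i+1}}$ back to $p_{H^v_{i+1}}$. Cross-freeness at $v$ is precisely the property that lets the fingers of distinct $H \in \mathcal{H}_v$ exit $v$ in disjoint arcs of the local disk, so arcs added at $v$ do not cross each other, and since arcs added near distinct vertices live in disjoint disk neighbourhoods, no global crossings arise.

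The main obstacle is formalising the routing so that fingers of different $H$'s, running along their spanning trees through shared vertices, do not collide globally --- the local non-interleaving at a single vertex must be upgraded to a compatible global choice of fingers. A natural way to carry this out is by induction on $|\mathcal{H}|$, in the same spirit as the iterative construction behind Theorem~\ref{thm:primalsupport}: process the subgraphs one at a time, and for the newly added $H$ insert $p_H$ and its fingers along a spanning tree using cross-freeness with each previously processed $H'$ at every shared vertex to choose a free slot in the cyclic order. An invariant that each finger stays inside an arbitrarily thin tubular neighbourhood of $T_H$ keeps the inserted curves from conflicting with edges of $Q^*$ already present and shows that the genus remains at most $g$ throughout, yielding the dual support claimed.
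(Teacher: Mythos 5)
Your construction is not the paper's (the paper never draws $Q^*$ on $\Sigma$ directly; it obtains $Q^*$ from $G$ by a lexicographic induction on (maximum depth, number of maximum-depth vertices), using Proposition~\ref{prop:nocontainment} to remove containments, \VB$(v)$ together with Lemmas~\ref{lem:keylem}, \ref{lem:remcrossfree} and \ref{lem:bypassdecrease} to reduce the depth, a strengthened inductive invariant --- the \emph{special edge property} --- to reconnect $\mathcal{H}_v$ after $v$ is bypassed, and Lemma~\ref{lem:dualembeddedlem} (edge contractions) in the depth-one base case; genus is preserved automatically because every step is a subdivision, a chord added inside the disk vacated by $v$, or a contraction). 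More importantly, your argument has a genuine gap at exactly the point you flag as "the main obstacle", and the local justification you offer for it is not valid. Cross-freeness at $v$ is a condition on the \emph{reduced} graph $R_G(H,H')$: it only forbids an alternation, around the contracted image of $v$, of edges leading to $H\setminus H'$ and to $H'\setminus H$. It says nothing about edges from $v$ into $H\cap H'$, and these are precisely the edges your spanning-tree fingers must use. For example, if the rotation at a shared vertex $x$ reads $a, c_1, b, c_2$ with $a\in H\setminus H'$, $b\in H'\setminus H$ and $c_1,c_2\in H\cap H'$, the pair is cross-free at $x$, yet if $T_H$ uses $\{x,a\},\{x,c_2\}$ and $T_{H'}$ uses $\{x,b\},\{x,c_1\}$ the two fingers are forced to cross inside the disk around $x$. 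So the claim that "cross-freeness at $v$ is precisely the property that lets the fingers exit $v$ in disjoint arcs" is false as stated; avoiding such collisions requires a coordinated global choice of the trees $T_H$ and of the routing at every shared vertex simultaneously, which your inductive "insert the new $H$ into a free slot" sketch does not establish (a slot free with respect to each previously drawn $H'$ individually need not exist, since the constraints at different vertices interact along the whole tree).

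A second, smaller unproved step is the assertion that pairwise cross-freeness at $v$ "induces a cyclic ordering $H^v_1,\ldots,H^v_{k_v}$ of $\mathcal{H}_v$ around $v$". Subgraphs in $\mathcal{H}_v$ may share edges at $v$, or one may use a subset of the edges another uses (containments of their edge sets at $v$), so no canonical cyclic order of the subgraphs exists in general; for connectivity of $Q^*[\mathcal{H}_v]$ any order would do, but your embedding argument depends on this order being compatible with the finger routing, and that compatibility is exactly what is missing. If you want to salvage a direct-drawing proof you would essentially be redoing the delicate topological arguments of Raman--Ray on a higher-genus surface, which is what the paper's combinatorial route (vertex bypassing plus the non-blocking-chord Lemma~\ref{lem:nblock} and the special edge property) is designed to avoid.
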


\begin{restatable}{thm}{intsupport}
\label{thm:intsupport}
Let $(G,\mathcal{H},\mathcal{K})$ be a cross-free intersection system of genus $g$. 
Then, there exists an intersection support $\tilde{Q}$ on $\mathcal{H}$ of genus at most $g$.
\end{restatable}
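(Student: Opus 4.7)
My strategy is to reduce the intersection support problem to the dual support problem established in Theorem~\ref{thm:dualembedded}. Given a cross-free intersection system $(G,\mathcal{H},\mathcal{K})$ of genus $g$ with a fixed cross-free embedding of $G$ on an orientable surface $\Sigma$ of genus $g$, I will build an auxiliary embedded graph system $(G',\mathcal{H}')$ that is cross-free and of genus at most $g$, equipped with a distinguished ``proxy'' vertex $v_K\in V(G')$ for each $K\in\mathcal{K}$, such that $\mathcal{H}'_{v_K}$ is in natural bijection with $\mathcal{H}_K$. Applying Theorem~\ref{thm:dualembedded} to $(G',\mathcal{H}')$ then yields a dual support $Q^*$ on $\mathcal{H}'\cong\mathcal{H}$ of genus at most $g$, and the dual-connectivity property of $Q^*$ at the proxy $v_K$ is exactly the intersection-connectivity of $\mathcal{H}_K$ in $\tilde{Q}:=Q^*$.

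The natural construction of $(G',\mathcal{H}')$ is, for each $K\in\mathcal{K}$, to pick a spanning tree $T_K$ of $K$ in $G$ and contract its edges in the induced embedding on $\Sigma$, producing a single proxy vertex $v_K$. Edge contraction does not increase the genus, and the image $H'$ of each $H\in\mathcal{H}$ remains connected in $G'$ and contains $v_K$ precisely when $V(H)\cap V(K)\neq\emptyset$. Setting $\mathcal{H}':=\{H':H\in\mathcal{H}\}$ gives the desired correspondence $\mathcal{H}'_{v_K}\leftrightarrow\mathcal{H}_K$, so that connectivity of $Q^*[\mathcal{H}'_{v_K}]$ (guaranteed by the dual support theorem) is exactly connectivity of $Q^*[\mathcal{H}_K]$ under the identification.

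The two subtle points I have to verify are (a) the proxies $v_K$ are distinct across different $K$, and (b) the resulting $(G',\mathcal{H}')$ is cross-free at every vertex of $G'$, in particular at each proxy. Issue (a) is a real one when $K,K'\in\mathcal{K}$ share a vertex, since a simultaneous contraction would merge $v_K$ and $v_{K'}$; this can be handled by subdividing shared vertices and edges in the embedding so that distinct copies appear inside $K$ and $K'$, keeping the genus bounded, or by an incremental construction processing one $K$ at a time. Issue (b) I expect to be the main obstacle: cross-freeness at $v_K$ is a condition on all pairs $H_1,H_2\in\mathcal{H}_K$, a set strictly larger than any single $\mathcal{H}_v$ for $v\in V(K)$. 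My plan is to deduce it from cross-freeness of $(G,\mathcal{H})$ at every $v\in V(K)$ using the topology of the embedding: contracting $T_K$ glues the local cyclic orderings at the vertices of $V(K)$ into one cyclic ordering at $v_K$, and an alternating $4$-edge pattern at $v_K$ in the reduced graph would, by tracing back along the tree $T_K$ on $\Sigma$, force an alternating pattern at some $v\in V(K)$, contradicting cross-freeness of $(G,\mathcal{H})$ there. If this direct argument proves too delicate, a fallback is an incremental construction over $|\mathcal{K}|$ using Theorem~\ref{thm:dualembedded} on a system grown one $K$ at a time with local surface edits re-establishing cross-freeness at each step.
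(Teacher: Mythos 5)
Your reduction breaks at exactly the point you flagged as the main obstacle, point (b): cross-freeness of $\mathcal{H}$ is \emph{not} preserved when you contract a spanning tree of $K$ to a proxy vertex $v_K$, and the trace-back argument you sketch is false. The reason is that the cross-free condition only constrains pairs $H_1,H_2$ at their \emph{common} vertices, whereas contracting $T_K$ splices together the rotation systems of all vertices of $K$, which may be covered by different members of $\mathcal{H}$. Concretely, take $G$ planar with a path $K=(a,c,b)$; let $H_2$ be induced on $\{c,y_2,y_4\}$ where $y_2$ is attached to $c$ above the path and $y_4$ below it, and let $H_1$ be induced on $\{a,b,p_1,p_2\}$ where $a\hbox{-}p_1\hbox{-}p_2\hbox{-}b$ is a path drawn below everything. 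Then $V(H_1)\cap V(H_2)=\emptyset$, so $(G,\{H_1,H_2\},\{K\})$ is a cross-free intersection system of genus $0$, and $K$ meets both $H_1$ and $H_2$. Contracting $K$ produces a vertex $v_K\in H_1'\cap H_2'$ whose cyclic order of incident edges is $y_2,p_2,y_4,p_1$, i.e.\ an alternating $H_2',H_1',H_2',H_1'$ pattern with four distinct neighbours and nothing left to contract in the reduced graph. So $(G',\mathcal{H}')$ is crossing at $v_K$ even though $(G,\mathcal{H})$ was vacuously cross-free at every vertex of $K$, and Theorem~\ref{thm:dualembedded} cannot be invoked. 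Your issue (a) (merging of proxies for overlapping $K,K'$) is also not resolved by the suggested subdivision, but (b) alone is fatal as stated; the fallback of ``local surface edits re-establishing cross-freeness'' is not an argument, and any such edit would have to be done without raising the genus, which is precisely the difficulty.

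For contrast, the paper avoids ever merging rotation systems of distinct vertices of $K$. It strengthens the dual support theorem to produce a dual support with a \emph{special edge property} (for every edge $\{u,v\}$ with $\mathcal{H}_u,\mathcal{H}_v\neq\emptyset$ but $\mathcal{H}_{\{u,v\}}=\emptyset$, some $H\in\mathcal{H}_u$ is adjacent in the support to some $H'\in\mathcal{H}_v$). Then connectivity of $\mathcal{H}_K$ is certified edge-by-edge along a path inside $K$ (Lemma~\ref{lem:conn}), provided no vertex of $K$ is uncovered by $\mathcal{H}$; uncovered ($\mathcal{K}$-)vertices are handled by vertex bypassing so that none is maximal, adding singleton dummy subgraphs $\mathcal{F}$ at them, applying the dual-support argument to $\mathcal{H}\cup\mathcal{F}$, and finally eliminating the dummy vertices with the primal-support contraction lemma (Lemma~\ref{lem:easypsupport2}). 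If you want to salvage your reduction, you would need either a substitute for Theorem~\ref{thm:dualembedded} that tolerates crossings created at the proxies, or a contraction scheme that provably cannot interleave edges of different subgraphs around $v_K$; the example above shows the latter cannot be achieved by contracting $K$ wholesale.
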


While the three results above are algorithmic, we are unable at this point to prove that the algorithms to construct the supports run in polynomial time, and we leave this as an open problem.

\subsection{Algorithms for Packing and Covering Problems}
As stated earlier, our motivation for constructing supports comes from the analysis of local-search
algorithms for a family of packing and covering problems. The fact that the graph we construct is a support, satisfies one of the requirements for the analysis of the algorithms.
The second requirement is that the graph should belong to a hereditary family with sublinear sized separators.
Since graphs of bounded genus have sublinear sized separators as shown by~\cite{alon1990separator, gilbert1984separator}, our results above, combined with the framework
in~\cite{ChanH12, mustafa2010improved, RR18} directly imply the following results
generalizing the results of~\cite{RR18} in the plane to higher genus surfaces.

In the \emph{generalized capacitated packing problem}, the input is an intersection system $(G,\mathcal{H},\mathcal{K})$ with 
a capacity function $\cp:\mathcal{K}\to\mathbb{N}$. The goal is to find a largest sub-collection $\mathcal{H}'\subseteq\mathcal{H}$ so that
$|\mathcal{H}'_K|\le\cp(K)$ for all $K\in\mathcal{K}$.
We prove the following.

\begin{restatable}{thm}{crossfreePTAS}
\label{thm:crossfreePTAS}
Let $(G,\mathcal{H},\mathcal{K})$ be a cross-free intersection system of genus $g$ for some constant $g$, and 
let $\cp:\mathcal{K}\to\mathbb{N}$ be a capacity function.
Let $\Delta>0$ be an absolute constant. 
If $cap(K)\le\Delta$ for all $K\in\mathcal{K}$,
then the {\bf Generalized Capacitated Packing Problem} admits a PTAS via the local-search framework.
\end{restatable}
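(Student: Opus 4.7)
The plan is to invoke the local-search framework of Chan--Har-Peled~\cite{ChanH12} and Mustafa--Ray~\cite{mustafa2010improved}, in the capacitated form used by Aschner et al.~\cite{DBLP:conf/walcom/AschnerKMY13} and Raman--Ray~\cite{RR18}, with the intersection support from Theorem~\ref{thm:intsupport} playing the role of the local-search graph. Fix a constant $k=k(\epsilon,g,\Delta)=\Theta(g\Delta^{2}/\epsilon^{2})$. The algorithm begins with any feasible packing $\LL\subseteq\HH$ (say $\LL=\emptyset$) and repeatedly performs a $(k,k{+}1)$-swap: it removes at most $k$ elements of $\LL$ and inserts at most $k{+}1$ elements of $\HH\setminus\LL$ as long as the resulting family is a strictly larger feasible packing. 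For fixed $k$ each iteration takes time $n^{O(k)}$, and since $|\LL|$ grows strictly the procedure halts after at most $|\HH|$ iterations, so the total running time is polynomial for every fixed $\epsilon,g,\Delta$.

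To analyze the approximation ratio, let $\loc$ be the returned local optimum and $\opt$ an optimum; set $A=\loc\setminus\opt$ and $B=\opt\setminus\loc$, so it suffices to show $|B|\le(1+\epsilon)|A|$. The sub-system $(G,A\cup B,\KK)$ is still cross-free of genus $g$ (it inherits the embedding and the restriction is cross-free vertex-by-vertex), so Theorem~\ref{thm:intsupport} supplies an intersection support $\tilde{Q}$ on $A\cup B$ of genus at most $g$ with $\tilde{Q}[(A\cup B)_K]$ connected for every $K\in\KK$. Feasibility of both $\loc$ and $\opt$ together with $\cp(K)\le\Delta$ imply $|A_K|,|B_K|\le\Delta$, so each hyperedge touches at most $2\Delta$ vertices of $\tilde{Q}$. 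By Theorem~\ref{thm:gilbert}, $\tilde{Q}$ has balanced sub-linear separators, and iterating the separator theorem in the standard way partitions $\tilde{Q}$ into regions of size at most $k$ whose boundaries have total size $O\!\bigl(\sqrt{g/k}\cdot|A\cup B|\bigr)$.

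The key exchange property I would establish is that for each region $R$, the family $(\loc\setminus R)\cup(\opt\cap R)$ is feasible modulo a charge of $O(\Delta)$ against the vertices of $R$ lying on the region boundary: if $(A\cup B)_K$ is contained entirely in $R$ then feasibility on $K$ is immediate from feasibility of $\opt$, while if $(A\cup B)_K$ meets the boundary then connectedness of $\tilde{Q}[(A\cup B)_K]$ forces at least one of its vertices to be in the separator, and the capacity excess is absorbed by that separator vertex via a charging scheme in which every boundary vertex absorbs at most $\Delta$ violations. With $k=\Theta(g\Delta^{2}/\epsilon^{2})$ the total boundary charge is at most $\epsilon|A|$, so if $|B|>(1+\epsilon)|A|$ then a standard averaging argument over regions locates one with strictly more $\opt$- than $\loc$-elements even after the boundary correction; the associated swap is a valid $(k,k{+}1)$-improvement, contradicting local optimality. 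The main obstacle I anticipate is making the capacitated exchange lemma tight: the uncapacitated analyses~\cite{ChanH12,mustafa2010improved,RR18} only need to preserve a single representative per hyperedge, whereas here one must simultaneously track up to $\Delta$ members per hyperedge and charge boundary-crossing hyperedges consistently across all $K\in\KK$, which is precisely where the constant upper bound on $\cp(K)$ must be used together with the support connectedness.
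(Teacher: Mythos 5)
Your algorithm and running-time discussion are fine, and using Theorem~\ref{thm:intsupport} on the subfamily $A\cup B$ is legitimate, but the heart of your analysis --- the region-by-region exchange on the support $\tilde{Q}$ with the claim that ``every boundary vertex absorbs at most $\Delta$ violations'' --- has a genuine gap. A boundary vertex of a region is an element $H\in A\cup B$, i.e.\ a subgraph of $G$, and nothing bounds the number of hyperedges $K\in\KK$ containing it: $|\KK_H|$ can be arbitrarily large even when $\cp(K)\le\Delta$ for every $K$. Concretely, let $H\in\loc$ be a single boundary vertex of a region $R$ that meets hyperedges $K_1,\dots,K_m$ with $\cp(K_j)=1$, where each $K_j$ also meets a distinct $O_j\in\opt\cap R$ in the interior of $R$. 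The swap $(\loc\setminus R)\cup(\opt\cap R)$ then violates all $m$ hyperedges, and restoring feasibility forces you to discard all $m$ elements $O_1,\dots,O_m$, a charge of $m\gg\Delta$ against the single boundary vertex $H$. Since $m$ can be comparable to $|\opt\cap R|$, the total boundary correction is not $O(\Delta)\cdot|\partial R|$ and the averaging step collapses. The underlying reason is that for the capacitated problem the local-search graph must satisfy the exact exchange property of Definition~\ref{def:lsp}: one needs an edge between \emph{every} $L\in\LL_K$ and \emph{every} $O\in\OO_K$, for every $K$ (a complete bipartite graph per hyperedge), so that removing $N(\OO')$ automatically clears each touched $K$ down to at most $\cp(K)$ elements. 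Connectivity of $\tilde{Q}[(A\cup B)_K]$ is far weaker than this, and your charging scheme cannot bridge the difference.

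The paper's proof takes a different route precisely to obtain that per-hyperedge biclique property while retaining sublinear separators. It first applies a Clarkson--Shor argument (Lemma~\ref{lem:DeltaK}, which itself uses Theorem~\ref{thm:intsupport}) to extract a witness family $\mathcal{K}'\subseteq\KK$ of size $O(\Delta\cdot(g+|\LL\cup\OO|))$ such that every pair $L,O$ co-occurring in some $K$ also co-occurs in some $K'\in\mathcal{K}'$; it then builds an intersection support for the \emph{role-reversed} system $(G,\mathcal{K}',\LL\cup\OO)$, and defines the actual local-search graph $\Xi$ on $\LL\cup\OO$ with $L\sim O$ iff some $K'\in\mathcal{K}'$ meets both. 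Feasibility of $(\LL\cup\OO')\setminus N(\OO')$ is then immediate, and sublinear balanced separators for $\Xi$ (which need not have bounded genus) are obtained by taking a weighted separator of the reversed support (Theorem~\ref{thm:gilbert}) and lifting it through the connected traces of the $H$'s, using the bound $|\LL_{K'}\cup\OO_{K'}|\le 2\Delta$ per vertex. If you try to repair your charging argument you will find yourself needing exactly these two ingredients --- a near-linear witness set $\mathcal{K}'$ and the biclique-per-hyperedge conflict graph --- so the fix essentially reproduces the paper's proof rather than giving an alternative one.
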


For a graph system $(G,\mathcal{H})$, let $\cp:V(G)\to\mathbb{N}$ be a capacity function on $V(G)$.
The {\bf Capacitated} $\mathbf{\mathcal{H}}${\bf-packing} problem is to find a maximum cardinality collection $\mathcal{H}'\subseteq\mathcal{H}$
such that for each $v\in V(G)$,
$|\mathcal{H}'_v|\le\cp(v)$.

Let $\cp:\mathcal{H}\to\mathbb{N}$ be a capacity function
on $\mathcal{H}$. Then the {\bf Capacitated Vertex Packing} problem is to find
the largest cardinality subset $V'\subseteq V(G)$ s.t. 
$|H\cap V'|\le\cp(H)$ for all $H\in\mathcal{H}$.

\begin{restatable}{cor}{packpack}
\label{cor:packpack}
Let $(G,\mathcal{H})$ be a cross-free graph system of genus $g$ for some constant $g$,
and let $\Delta>0$ be an absolute constant.
\begin{enumerate}
\item Given a capacity
function $\cp:V(G)\to\mathbb{N}$ such that 
$\cp(v)\le\Delta$
for all $v\in V$.
Then, there is a PTAS for the 
{\bf Capacitated $\mathcal{H}$-Packing}
problem via the local-search framework.
\item Given a capacity
function $\cp:\mathcal{H}\to\mathbb{N}$ such that $\cp(H)\le\Delta$ for all $H\in\mathcal{H}$. 
Then there is a PTAS for the {\bf Capacitated Vertex Packing} 
problem via the local-search framework.
\end{enumerate}
\end{restatable}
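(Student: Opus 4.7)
The plan is to derive both parts of Corollary~\ref{cor:packpack} directly from Theorem~\ref{thm:crossfreePTAS} by exhibiting each problem as a special case of Generalized Capacitated Packing on a suitably augmented intersection system. The unifying observation is that adjoining singleton vertex subgraphs to either side of an intersection system preserves cross-freeness: two distinct one-vertex subgraphs share no common vertex at which cross-freeness has to be checked, and a one-vertex subgraph $K_v$ with $V(K_v)=\{v\}$ contributes no edges at $\tilde v$ in any reduced graph, so the forbidden four-edge alternation between $H\setminus H'$ and $H'\setminus H$ simply cannot arise. Hence enlarging a cross-free family by singletons is a ``free'' operation, and all that is needed is to match up the constraints.

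For part~(1), I would construct the intersection system $(G,\mathcal{H},\mathcal{K})$ where $\mathcal{K}=\{K_v:v\in V(G)\}$ is the family of all one-vertex subgraphs, and define $\cp'(K_v):=\cp(v)$. Fix an embedding of $G$ witnessing cross-freeness of $(G,\mathcal{H})$; since $(G,\mathcal{K})$ is cross-free in every embedding, $(G,\mathcal{H},\mathcal{K})$ is a cross-free intersection system of genus $g$. For any sub-collection $\mathcal{H}'\subseteq\mathcal{H}$, one has $\mathcal{H}'_{K_v}=\{H\in\mathcal{H}':v\in V(H)\}=\mathcal{H}'_v$, so the generalized packing constraint $|\mathcal{H}'_{K_v}|\le\cp'(K_v)$ is precisely the Capacitated $\mathcal{H}$-Packing constraint $|\mathcal{H}'_v|\le\cp(v)$. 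Since $\cp'(K_v)\le\Delta$ for every $v$, Theorem~\ref{thm:crossfreePTAS} applies and yields a PTAS via the local search framework.

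For part~(2), I would symmetrically swap the roles of the two families: set $\mathcal{H}^{\star}:=\{K_v:v\in V(G)\}$, $\mathcal{K}^{\star}:=\mathcal{H}$, and $\cp^{\star}(H):=\cp(H)$. Identifying any subset $V'\subseteq V(G)$ with the corresponding sub-collection $\{K_v:v\in V'\}\subseteq\mathcal{H}^{\star}$, we obtain $|(\mathcal{H}^{\star})'_{H}|=|V'\cap V(H)|$, so the generalized packing constraint on $(G,\mathcal{H}^{\star},\mathcal{K}^{\star})$ coincides with $|V'\cap V(H)|\le\cp(H)$, the Capacitated Vertex Packing constraint. Cross-freeness of $(G,\mathcal{H}^{\star},\mathcal{K}^{\star})$ again follows because $\mathcal{H}^{\star}$ consists of singletons and $(G,\mathcal{K}^{\star})=(G,\mathcal{H})$ is cross-free by hypothesis, while the uniform bound $\cp^{\star}(H)\le\Delta$ is inherited. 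Theorem~\ref{thm:crossfreePTAS} then delivers the PTAS.

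The only non-routine step is the cross-freeness check for the augmented systems, and as sketched above it is essentially trivial: the alternation pattern forbidden by the cross-free definition is vacuous whenever one of the two subgraphs involved has no incident edges at the shared vertex, which is the case for singletons. No separate argument is needed for the bounded genus, the uniform capacity bound, the existence of an intersection support (Theorem~\ref{thm:intsupport}), or the local search machinery itself, since these are all packaged inside Theorem~\ref{thm:crossfreePTAS}. I therefore expect no genuine obstacle in turning this outline into a formal proof beyond writing the two reductions and the accompanying cross-freeness verifications cleanly.
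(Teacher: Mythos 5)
Your proposal is correct and matches the paper's own proof: both parts are reduced to the Generalized Capacitated Packing problem of Theorem~\ref{thm:crossfreePTAS} by adjoining the family of singleton vertex subgraphs as $\mathcal{K}$ (part~1) or as the family being packed (part~2), noting that singletons are trivially cross-free so the augmented intersection system remains cross-free of genus $g$. The only difference is that you spell out the constraint-matching and cross-freeness verification in more detail than the paper does, which is harmless.
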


\begin{restatable}{thm}{intcover}
\label{thm:intcover}
Let $(G,\mathcal{H},\mathcal{K})$ be a cross-free intersection system
of genus $g$ where $g$ is bounded above by a constant. Then, there is a PTAS for the 
{\bf Generalized Covering Problem} via the local-search framework.
\end{restatable}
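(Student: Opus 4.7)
The plan is to adapt the local-search framework of Mustafa--Ray~\cite{mustafa2010improved} and Raman--Ray~\cite{RR18}, using the intersection support from Theorem~\ref{thm:intsupport} in place of their planar support. First I would define the algorithm: starting from an arbitrary feasible cover $\HH' \subseteq \HH$ (for example $\HH$ itself) and fixing a swap size $b = b(\epsilon, g)$, we repeatedly search over all pairs $(S, T)$ with $S \subseteq \HH'$, $T \subseteq \HH \setminus \HH'$, $|S|, |T| \le b$, and $|T| < |S|$, replacing $\HH'$ by $(\HH' \setminus S) \cup T$ whenever the new set is still feasible. Since $b$ is a constant, each iteration runs in polynomial time, and since the objective strictly decreases, the algorithm terminates after polynomially many iterations. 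Let $\LL$ be the resulting locally optimal cover and $\OO$ a globally optimal cover.

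For the approximation analysis, consider the sub-system $(G, \LL \cup \OO, \KK)$. Since the cross-free property is preserved on restricting to any subfamily of $\HH$, this is a cross-free intersection system of genus $g$, and Theorem~\ref{thm:intsupport} supplies an intersection support $\tilde{Q}$ on $\LL \cup \OO$ of genus at most $g$. Concretely, for every $K \in \KK$, the set $(\LL \cup \OO)_K = \{H \in \LL \cup \OO : V(H) \cap V(K) \ne \emptyset\}$ induces a connected subgraph of $\tilde{Q}$. This $\tilde{Q}$ will play the role of the local-search graph, and the connectedness condition is exactly the graph-theoretic ``locality'' property required by the Mustafa--Ray framework.

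Next I would invoke Theorem~\ref{thm:gilbert}: since $\tilde{Q}$ has genus at most $g$, it admits balanced vertex separators of size $O(\sqrt{g \cdot |\LL \cup \OO|})$, which is sub-linear because $g$ is constant. Combined with the locality property, the standard divide-and-conquer analysis converts the absence of a beneficial $b$-swap into the approximation guarantee $|\LL| \le (1 + \epsilon) |\OO|$ for $b = \Theta(g/\epsilon^2)$, which is exactly a PTAS.

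The main obstacle lies in verifying the exchange/locality condition in the form demanded by the covering version (as opposed to the packing version of Theorem~\ref{thm:crossfreePTAS}): one must show that for every $K \in \KK$ not already covered by $\LL \cap \OO$, the subgraph $\tilde{Q}[(\LL \cup \OO)_K]$ contains an edge between $\LL_K \setminus \OO$ and $\OO_K \setminus \LL$, and that this extends consistently through the recursive separator decomposition. Both parts follow from the connectedness of $\tilde{Q}[(\LL \cup \OO)_K]$ together with the feasibility of $\LL$ and $\OO$, in direct analogy with~\cite{RR18}; the only substantive change is that their planarity-based separator step is replaced by Theorem~\ref{thm:gilbert}, which introduces an extra $\sqrt{g}$ factor that is absorbed into the choice of $b$.
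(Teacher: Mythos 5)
Your proposal is correct and follows essentially the same route as the paper: restrict to $\LL\cup\OO$ (after discarding $\LL\cap\OO$), apply Theorem~\ref{thm:intsupport} to get an intersection support $\tilde{Q}$ of genus at most $g$, derive the exchange property for each $K\in\KK$ from the connectedness of $\tilde{Q}[(\LL\cup\OO)_K]$ together with feasibility of both solutions, and conclude via the sub-linear separators of Theorem~\ref{thm:gilbert} and the standard local-search analysis. The only cosmetic difference is that the paper extracts the bipartite graph $\Xi$ on $\LL\cup\OO$ from $\tilde{Q}$ before verifying the Local-Search Property, whereas you work with $\tilde{Q}$ directly; this does not change the argument.
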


For a graph system $(G,\mathcal{H})$,
an \emph{intersection graph} is the graph 
$D=(\mathcal{H},E)$, where 
$\{H,H'\}\in E(D)$ if and only if $V(H)$ and $V(H')$
intersect at a vertex of $G$. 

\begin{restatable}{cor}{dominatingcrossfree}
\label{cor:dominating}
Let $(G,\mathcal{H})$ be a cross-free graph system of genus $g$ for some constant $g$, and $\mathcal{D}$ denote the intersection graph.
Then, the following problems on $\mathcal{D}$ admit a PTAS:
$(a)$ Independent Set, $(b)$ Vertex Cover, and $(c)$ Dominating Set.
\end{restatable}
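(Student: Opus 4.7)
The plan is to derive each of the three problems from the packing/covering results already established in the paper, using the dual support $Q^*$ of Theorem~\ref{thm:dualembedded} as the sparse genus-$g$ auxiliary graph required by the local-search framework.

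For \textbf{Independent Set} on $\mathcal{D}$, a set $\mathcal{H}'\subseteq\mathcal{H}$ is independent in $\mathcal{D}$ iff its members are pairwise vertex-disjoint in $G$, i.e.\ $|\mathcal{H}_v\cap\mathcal{H}'|\le 1$ for every $v\in V(G)$. This is exactly the \textbf{Capacitated $\mathcal{H}$-Packing} problem with uniform capacity $\cp\equiv 1$, so Corollary~\ref{cor:packpack}(1) applied with $\Delta=1$ immediately yields a PTAS.

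For \textbf{Dominating Set} on $\mathcal{D}$, I would set $\mathcal{K}=\mathcal{H}$. Since $(G,\mathcal{H})$ is cross-free of genus $g$, the intersection system $(G,\mathcal{H},\mathcal{H})$ is a cross-free intersection system of genus $g$. A subset $\mathcal{H}'$ is a dominating set of $\mathcal{D}$ precisely when for every $K\in\mathcal{K}$ some $H\in\mathcal{H}'$ satisfies $V(H)\cap V(K)\ne\emptyset$ (noting that $K$ dominates itself whenever $K\in\mathcal{H}'$), so this is exactly the \textbf{Generalized Covering Problem} on $(G,\mathcal{H},\mathcal{K})$ and Theorem~\ref{thm:intcover} supplies the PTAS.

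For \textbf{Vertex Cover} on $\mathcal{D}$, a vertex cover is a set $\mathcal{H}'$ with $|\mathcal{H}_v\setminus\mathcal{H}'|\le 1$ for every $v$. I would run $k$-local search directly on $\mathcal{D}$ and use the dual support $Q^*$---which has balanced separators of size $O(\sqrt{g|\mathcal{H}|})$ by Theorem~\ref{thm:gilbert}---as the witness graph for the locality-gap analysis. Given a local optimum $L$ and a global optimum $O$, the connectedness of $Q^*[\mathcal{H}_v]$ for each $v$ lets me replace every conflict pair $\{H,H'\}\in E(\mathcal{D})$ with $H\in L\setminus O$, $H'\in O\setminus L$ by an edge of $Q^*$ joining a vertex of $L$ to a vertex of $O$, producing a bounded-genus exchange graph on $L\cup O$; the standard separator-based calculation then bounds $|L|/|O|$ by $1+\epsilon$.

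The main obstacle is the Vertex Cover case: unlike the other two parts, it does not specialize directly to one of the packing/covering theorems, and $\mathcal{D}$ itself can contain arbitrarily large cliques, so its genus need not be bounded. The delicate step is to leverage the fact that $Q^*[\mathcal{H}_v]$ is \emph{connected}, not merely non-empty, so that every exchange needed by the locality-gap argument can be routed through $Q^*$ without blowing up the genus, allowing the $O(\sqrt{g\,|L\cup O|})$ separator bound to drive the PTAS as in~\cite{ChanH12,mustafa2010improved,BasuRoy2018,RR18}.
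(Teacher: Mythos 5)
Your treatment of $(a)$ and $(c)$ matches the paper: Independent Set is exactly Capacitated $\mathcal{H}$-Packing with $\cp\equiv 1$ (Corollary~\ref{cor:packpack}), and Dominating Set is the Generalized Covering problem on the cross-free intersection system $(G,\mathcal{H},\mathcal{H})$, handled by the intersection support / Theorem~\ref{thm:intcover}. Those two parts are fine.

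The Vertex Cover part, however, has a genuine gap, and it is exactly the step you flagged as delicate. In the local-search framework for a minimization problem, the exchange graph on $\mathcal{L}\cup\mathcal{O}$ must satisfy: for every $\mathcal{L}'\subseteq\mathcal{L}$, $(\mathcal{L}\setminus\mathcal{L}')\cup N(\mathcal{L}')$ is feasible. For Vertex Cover on $\mathcal{D}$, each ``element'' to be covered is an edge $\{H,H'\}$ of $\mathcal{D}$, and it is covered by exactly two objects, namely $H$ and $H'$. So if $H\in\mathcal{L}\setminus\mathcal{O}$, $H'\in\mathcal{O}\setminus\mathcal{L}$ and you take $\mathcal{L}'=\{H\}$, feasibility forces $H'\in N(H)$: the exchange graph must literally contain every edge of $\mathcal{D}$ between $\mathcal{L}\setminus\mathcal{O}$ and $\mathcal{O}\setminus\mathcal{L}$. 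Connectivity of $Q^*[\mathcal{H}_v]$ does not help here, because you cannot substitute some other optimum vertex reachable in $Q^*$ for $H'$ --- no other vertex covers that particular edge. Since a cross-free system can make $\mathcal{D}$ contain arbitrarily large cliques (e.g.\ many subgraphs through a common vertex, nested so as to remain cross-free), the required bipartite piece of $\mathcal{D}$ can be a large complete bipartite graph, which has no sublinear separators; the separator-based locality-gap calculation therefore cannot be run, and ``routing through $Q^*$'' does not produce a valid bounded-genus exchange graph.

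The paper avoids local search for Vertex Cover entirely. It uses the Nemhauser--Trotter half-integrality of the standard LP relaxation: discard the weight-$0$ vertices, keep the weight-$1$ vertices, and observe that on the set $\mathcal{H}'$ of weight-$1/2$ vertices any vertex cover has size at least $|\mathcal{H}'|/2$. It then computes a $(1-\epsilon)$-approximate Independent Set on the (still cross-free) system induced by $\mathcal{H}'$ using part $(a)$ and takes its complement, which yields a vertex cover of size at most $(1+\epsilon)$ times optimal. Replacing your local-search argument for $(b)$ with this LP-based reduction closes the gap.
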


We show a natural class of geometric graphs and hypergraphs that can be modeled as 
cross-free graph systems. Hence, the results on Packing and Covering problems above hold 
for such hypergraphs.

A set $\mathcal{D}$ of connected regions in the plane is said to be \emph{weakly non-piercing} if $D\setminus D'$ or
$D'\setminus D$ is connected for all $D,D'\in\mathcal{D}$. This is weaker than the \emph{non-piercing} condition considered in~\cite{RR18}, 
which requires that both $D\setminus D'$ and $D'\setminus D$ are connected.

\begin{restatable}{thm}{touchcross}
\label{thm:touchcross}
Let $\mathcal{H}$ and $\mathcal{K}$ be two families of simply connected weakly non-piercing regions on an oriented surface $\Sigma$ of genus $g$.
Then, we can define an embedded cross-free intersection system $(G,\mathcal{H}',\mathcal{K}')$ of genus $g$ and a bijection between
$\mathcal{H}$ and $\mathcal{H}'$, and between $\mathcal{K}$ and $\mathcal{K}'$ so that the intersection hypergraph defined by $\mathcal{H}$ and $\mathcal{K}$
is isomorphic to that defined by $\mathcal{H}'$ and $\mathcal{K}'$.
\end{restatable}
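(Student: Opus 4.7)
The plan is to realize the geometric intersection hypergraph combinatorially via the boundary arrangement. After a small perturbation that enforces general position (transverse crossings only, no tangencies or triple points), the union of boundary curves $\{\partial R : R \in \mathcal{H} \cup \mathcal{K}\}$ cuts the oriented surface $\Sigma$ into a cell complex. Let $G$ be the dual graph of this arrangement: one vertex per face, and an edge between two faces that share a $1$-dimensional boundary arc. Placing each vertex inside its face and routing edges across the shared arcs gives a $2$-cell embedding of $G$ on $\Sigma$, so the genus of $G$ is at most $g$. For each region $R \in \mathcal{H} \cup \mathcal{K}$, define $R'$ to be the subgraph of $G$ induced by the vertices corresponding to faces contained in $R$; since $R$ is simply connected, the faces inside $R$ are connected via shared arcs in $R$, so $R'$ is a connected subgraph. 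Let $\mathcal{H}' = \{H' : H \in \mathcal{H}\}$ and $\mathcal{K}' = \{K' : K \in \mathcal{K}\}$ with the natural bijections $H \leftrightarrow H'$ and $K \leftrightarrow K'$.

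For the hypergraph isomorphism, general position guarantees that two regions intersect iff their geometric intersection has nonempty interior iff they share at least one face of the arrangement, so $H \cap K \neq \emptyset$ iff $V(H') \cap V(K') \neq \emptyset$, hence $\mathcal{H}_K$ equals $\mathcal{H}'_{K'}$ under the bijections.

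The main content is verifying that $(G, \mathcal{H}')$ and $(G, \mathcal{K}')$ are each cross-free; recall that the definition of a cross-free intersection system permits $H \in \mathcal{H}$ and $K \in \mathcal{K}$ to cross, so no joint condition across the families is required. Fix two subgraphs $H_1', H_2' \in \mathcal{H}'$ and a face $f$ lying in both. The connected component of $G[V(H_1') \cap V(H_2')]$ containing $f$ corresponds to a connected component $C$ of $H_1 \cap H_2$. In the reduced graph, the contracted vertex $\tilde f$ has one edge per boundary arc of $\partial C$ lying on $\partial H_1 \cup \partial H_2$: edges across $\partial H_2$-arcs lead to faces in $H_1 \setminus H_2$, and edges across $\partial H_1$-arcs lead to faces in $H_2 \setminus H_1$. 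The cyclic order of these edges around $\tilde f$ matches the cyclic order of the corresponding arcs along $\partial C$ in the embedding.

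The hard step, which I expect to be the main obstacle, is to rule out the forbidden alternating pattern via the weakly non-piercing hypothesis. Suppose for contradiction that $\partial C$ has four arcs $a_1, a_2, a_3, a_4$ in cyclic order with $a_1, a_3 \subset \partial H_2$ and $a_2, a_4 \subset \partial H_1$, and let $f_1, f_3$ be the fingers of $H_1 \setminus H_2$ across $a_1, a_3$ while $f_2, f_4$ are the fingers of $H_2 \setminus H_1$ across $a_2, a_4$. I would show that $f_1, f_3$ must lie in distinct components of $H_1 \setminus H_2$: any hypothetical connecting path $\gamma$ in $H_1 \setminus H_2$ together with a path through $C$ from $a_1$ to $a_3$ and small arcs across $a_1, a_3$ forms a simple closed curve $\alpha$ lying entirely in the simply-connected region $H_1$. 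Since $H_1$ is a topological disk embedded in $\Sigma$, $\alpha$ is nullhomotopic and bounds an embedded disk inside $H_1$; tracing which side of $\alpha$ contains each of $a_2$ and $a_4$ shows that the two $\partial H_1$-arcs lie on opposite sides of $\alpha$, hence the corresponding fingers $f_2, f_4$ (which lie outside $H_1$) are separated into distinct components of $H_2 \setminus H_1$. The symmetric argument then disconnects $H_1 \setminus H_2$, contradicting weakly non-piercing. The delicate point is ensuring $\alpha$ is nullhomotopic in $\Sigma$: this uses simple-connectedness of the \emph{regions} $H_i$, not merely of the ambient surface, and is what makes the argument robust to the genus of $\Sigma$.
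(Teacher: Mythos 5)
Your overall construction (dual graph of the boundary arrangement, regions inducing connected subgraphs, and a null-homotopy argument inside the simply connected region to kill the alternating pattern) follows the same route as the paper, which builds the dual arrangement graph (refined by a fine grid so that each region is surrounded by a cycle $C_R$ of cells) and derives the contradiction from a cycle lying inside the disk-like region. However, your concluding step has a genuine logical gap. Weakly non-piercing is a \emph{disjunction}: for the pair $H_1,H_2$ it only asserts that at least one of $H_1\setminus H_2$, $H_2\setminus H_1$ is connected; to contradict it you must show \emph{both} are disconnected. Your argument establishes only the implication ``if $f_1,f_3$ are joined by a path $\gamma$ in $H_1\setminus H_2$, then $f_2,f_4$ lie in distinct components of $H_2\setminus H_1$,'' and the ``symmetric argument'' you invoke needs a connecting path in $H_2\setminus H_1$ between $f_2$ and $f_4$ --- which, by what you just proved, does not exist. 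So the two implications together are perfectly consistent with the scenario ``$H_1\setminus H_2$ connected, $H_2\setminus H_1$ disconnected,'' which does \emph{not} violate weak non-piercing, and no contradiction is reached. The fix is the stronger conclusion you narrowly miss (and which is the paper's punchline): after using weak non-piercing to assume WLOG that $H_1\setminus H_2$ is connected, the curve $\alpha$ lies in the disk $H_1$ and hence bounds a disk \emph{contained in} $H_1$; by the alternation at the contracted vertex, one of the arcs $a_2,a_4$ together with its finger of $H_2\setminus H_1$ lies inside that disk, i.e.\ inside $H_1$ --- an outright contradiction, since that finger is disjoint from $H_1$. With this replacement the topological core of your argument goes through.

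A secondary concern is your opening perturbation to general position. The weakly non-piercing setting was introduced precisely to allow regions that touch at points or share boundary arcs; a perturbation must then be shown (i) to preserve the intersection hypergraph (two regions meeting only in a measure-zero set must neither be pulled apart nor have their hyperedges changed) and (ii) to preserve the weakly non-piercing property, neither of which is automatic. The paper avoids this by not perturbing at all: it keeps the degenerate contacts and defines the dual arrangement graph with adjacencies across touching points (plus a fine grid to refine the cells), so the combinatorics of the original arrangement is represented directly. You should either supply the perturbation argument or adopt the unperturbed arrangement as the paper does.
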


We also show that covering problems involving non-piercing
subgraphs of a bounded genus graph are APX-hard. For non-piercing regions in the plane, these problems admit a PTAS.

\begin{restatable}{thm}{apxhard}
\label{thm:apxhard}
There exist crossing non-piercing graph systems $(G,\mathcal{H})$ with $G$ embedded on the torus such that the Set Cover problem is APX-hard. Similarly, the Hitting Set problem on such a set system is APX-hard.
\end{restatable}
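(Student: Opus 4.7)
The approach is a gap-preserving ($L$-)reduction from Vertex Cover on cubic graphs --- a classical APX-hard problem --- to Set Cover on non-piercing graph systems $(G,\mathcal{H})$ with $G$ embedded on the torus. Vertex Cover on a cubic graph $H=(V_H,E_H)$ is equivalent to the Set Cover instance with universe $E_H$ and sets $S_v = \{e\in E_H : v \in e\}$ for $v\in V_H$; here each set has size exactly three and each element appears in exactly two sets. An $\alpha$-approximation for Set Cover on the constructed torus instance would immediately yield an $\alpha$-approximation for Vertex Cover on $H$, so APX-hardness of the former follows from APX-hardness of the latter.

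For the construction, I introduce a vertex $x_e$ of $G$ for each edge $e\in E_H$ and embed these marked vertices together with auxiliary vertices and edges into a sufficiently fine toroidal grid graph $G$. For each $v\in V_H$, I realize the corresponding subgraph $H_v\in\mathcal{H}$ as a thin \emph{ribbon} on the torus that contains precisely the three marked vertices $\{x_e : v\in e\}$, together with interior auxiliary vertices ensuring connectivity. The ribbons are allowed to wind non-contractibly around the torus, and, crucially, different ribbons may be assigned different winding directions (meridian versus longitude); two transversely winding ribbons can intersect in a single topological disk, which is precisely the flexibility the torus provides but the plane does not.

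The main verification is that $(G,\mathcal{H})$ is non-piercing. For any pair $H_u,H_v$, both $H_u\setminus H_v$ and $H_v\setminus H_u$ must induce connected subgraphs of $G$. If two ribbons share no marked vertex and wind in the same direction, they are kept disjoint. If they share a marked vertex $x_e$ with $e=\{u,w\}$, a local junction gadget around $x_e$ is designed so that deleting one ribbon's vertices from the other leaves a path that exits the junction and returns through the remainder of the ribbon's winding loop. If two ribbons wind transversely and cross in an auxiliary region, their intersection is a single disk whose removal leaves a connected ``slit annulus'' in each ribbon. At such transverse crossings the subgraphs \emph{are} crossing in the sense of the definition in Section~\ref{sec:preliminaries}, so the system is not cross-free --- which is necessary, since otherwise Theorem~\ref{thm:intsupport} would supply a bounded-genus support and Theorem~\ref{thm:intcover} would yield a PTAS, contradicting APX-hardness. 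Hitting Set hardness follows from the symmetric construction for the dual hypergraph, which can be realized on the torus by the same scheme with the roles of points and subgraphs interchanged.

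The hard part will be the global routing: arranging all $|V_H|$ ribbons simultaneously on a torus while maintaining the non-piercing property for every pair, given that the cubic graph $H$ may itself be far from toroidal. This requires partitioning the ribbons into a bounded number of winding classes and designing the junction gadgets so that the three ribbons meeting at each $x_e$ are pairwise non-piercing there without introducing piercing pairs anywhere else on the torus. Once this topological construction is in place, the reduction is of linear size and the two optima agree on the nose, giving an $L$-reduction and hence APX-hardness for both Set Cover and Hitting Set on non-piercing graph systems embedded on the torus.
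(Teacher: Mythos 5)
Your proposal is a plan, not a proof: the step you yourself flag as ``the hard part'' --- simultaneously routing all $|V_H|$ ribbons of an arbitrary cubic graph on the torus, with junction gadgets at each marked vertex $x_e$, so that \emph{every} pair of ribbons is non-piercing --- is exactly the content that would have to be constructed and verified, and it is left entirely open. The difficulty is not cosmetic. With only two winding classes available on the torus, adjacent vertices of $H$ that land in the same class must be joined by local detours/protrusions to share $x_e$; such a protrusion of $H_u$ that crosses an intermediate parallel ribbon $H_w$ threatens to disconnect $H_u\setminus H_w$ (the tip beyond the crossing is cut off from the annular body of $H_u$), and a pair of ribbons that is forced to meet or cross in two separate regions can make one of the differences disconnected outright. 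Since $H$ is an arbitrary cubic graph, nothing in your sketch controls how many times a given pair of ribbons must interact, so the non-piercing property for all pairs is not established, and it is not even clear the construction exists as described. Without it there is no instance, hence no $L$-reduction.

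For comparison, the paper avoids this routing problem altogether: it starts from the Chan--Grant APX-hardness of Set Cover and Hitting Set for points versus horizontal and vertical slabs in the plane, embeds that construction on the torus, and replaces each horizontal slab by a band bounded by two non-separating curves parallel to the hole and each vertical slab by a band perpendicular to the hole. Bands of the same class are pairwise disjoint, and a horizontal band meets a vertical band in a single disk, so every pairwise difference is (an annulus minus a disk, hence) connected --- non-piercing is immediate, while the pairs are crossing, as they must be. Passing to the dual arrangement graph with a representative vertex per non-empty cell gives the crossing non-piercing graph system, and APX-hardness is inherited directly. If you want to salvage your approach, the cleanest fix is to reduce from a hardness result whose set system already has this ``two families of parallel slabs'' structure (as the paper does), rather than from cubic Vertex Cover, whose incidence structure forces the unresolved global routing.
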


\subsection{Hypergraph Coloring}
We obtain the following results for the hypergraph coloring problem, generalizing the results of~\cite{KellerS18},~\cite{Keszegh20} and~\cite{RR18}. Our results in combination with previous results also imply a conflict-free coloring
with $O(\log n)$ colors.

\begin{restatable}{thm}{colorhypergraph}
\label{thm:colorhypergraph}
Let $\mathcal{H}$ and $\mathcal{K}$ be two families of simply connected weakly non-piercing regions on an oriented surface of genus $g$.
Then the intersection hypergraph has a proper coloring with at most $\frac{7 + \sqrt{1+24g}}{2}$ colors i.e.,
$\mathcal{H}$ can be colored with at most $\frac{7 + \sqrt{1+24g}}{2}$ colors such that for any $K\in\mathcal{K}$,
no hyperedge ${\mathcal{H}_K}$ is monochromatic. 
\end{restatable}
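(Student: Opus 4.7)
The plan is to reduce the geometric problem to a graph-coloring problem on the support graph by chaining the results already stated in the paper. First I would invoke Theorem~\ref{thm:touchcross} to replace the geometric weakly non-piercing families $\mathcal{H}$ and $\mathcal{K}$ by an embedded cross-free intersection system $(G,\mathcal{H}',\mathcal{K}')$ of genus $g$, together with bijections that make the intersection hypergraph on $(\mathcal{H}',\mathcal{K}')$ isomorphic to the one on $(\mathcal{H},\mathcal{K})$. So it suffices to produce a proper coloring of the intersection hypergraph of $(G,\mathcal{H}',\mathcal{K}')$ using the claimed number of colors, and transport it back.

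Next I would apply Theorem~\ref{thm:intsupport} to $(G,\mathcal{H}',\mathcal{K}')$ to obtain an intersection support $\tilde{Q}$ on the vertex set $\mathcal{H}'$ whose genus is at most $g$. By the defining property of an intersection support, for every $K\in\mathcal{K}'$ the induced subgraph $\tilde{Q}[\mathcal{H}'_K]$ is connected; in particular, whenever $|\mathcal{H}'_K|\ge 2$ it contains at least one edge. Thus any proper vertex-coloring of $\tilde{Q}$ (adjacent vertices receive distinct colors) automatically assigns at least two different colors to every non-trivial hyperedge $\mathcal{H}'_K$, so no such hyperedge is monochromatic.

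It therefore remains to properly color $\tilde{Q}$ with at most $\frac{7+\sqrt{1+24g}}{2}$ colors. Since $\tilde{Q}$ has genus at most $g$, I would appeal to the Heawood-type upper bound on the chromatic number of graphs embeddable on an oriented surface of genus $g$, which gives precisely a bound of this form; for $g=0$ it reduces to the four-color theorem. Combining this colouring of $\tilde{Q}$ with the hypergraph-isomorphism from the first step yields the desired proper colouring of the original intersection hypergraph of $\mathcal{H}$ and $\mathcal{K}$.

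The only delicate point is the edge case of singleton hyperedges $\mathcal{H}'_K$ with $|\mathcal{H}'_K|\le 1$, since a one-element hyperedge is unavoidably monochromatic; as is standard in this line of work I would either restrict attention to hyperedges of size at least two (the natural setting for a proper hypergraph colouring) or remark that such degenerate hyperedges are excluded by assumption on the input. Apart from that, the argument is a clean concatenation of Theorem~\ref{thm:touchcross}, Theorem~\ref{thm:intsupport}, and the Heawood bound, and no new combinatorial machinery is required.
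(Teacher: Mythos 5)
Your proposal is correct and follows essentially the same route as the paper: apply Theorem~\ref{thm:touchcross} to pass to a cross-free intersection system, Theorem~\ref{thm:intsupport} to get an intersection support $\tilde{Q}$ of genus at most $g$, and then the Heawood bound to properly color $\tilde{Q}$, so that connectivity of each $\tilde{Q}[\mathcal{H}'_K]$ forces an edge and hence two colors in every hyperedge of size at least two. Your explicit remark about singleton hyperedges is a minor point the paper leaves implicit.
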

 
\smallskip\noindent
{\bf A guide to the reader:}
In Section~\ref{sec:nonpiercingimpliescrossfree}, we contrast non-piercing graph
systems with cross-free graph systems. In Section~\ref{sec:vertexbypassing}, we introduce the notion of \emph{Vertex Bypassing}, a basic tool used
in the construction of supports. In Section~\ref{sec:constructiongenus}, we show how the primal, dual and intersection supports can be constructed.
We describe applications of our results on the existence and construction of supports in Section~\ref{sec:applications}.
We conclude in Section~\ref{sec:conclusion} with open questions.

\section{Non-piercing and Cross-free Systems}
\label{sec:nonpiercingimpliescrossfree}
In this section, we relate the geometric notion of non-piercing regions to the graph-theoretic notion of being cross-free.
We show that the non-piercing condition implies the cross-free condition in the plane, but they are incomparable in higher genus surfaces.
Below, we define non-piercing regions and the dual arrangement graph of regions.

\begin{definition}[Non-piercing regions]\label{def:npregions}
Let $\mathcal{R}$ be a set of compact, path connected regions in general position in $\mathbb{R}^2$ such that boundary of each 
$R\in\mathcal{R}$ consists of a finite set of disjoint simple Jordan curves.
Then $\mathcal{R}$ is said to be non-piercing if for all $R ,R'\in\mathcal{R}$, the regions $R\setminus R'$ and $R'\setminus R$ are connected.
\end{definition}

By general position above, we mean that the boundaries of any two regions intersect at only a finite number of points where they cross, 
and that there are no three regions whose boundaries contain a common point. See Fig.~\ref{fig:pseudodisks, piercing and non-piercing} for an example.\\
Let $\mathcal{R}$ be an arrangement of connected and bounded regions in the plane, and $\partial(R)$ defines the boundary of $R\in\mathcal{R}$.
Then $\mathcal{R}\setminus\bigcup_{R\in\mathcal{R}}\partial(R)$ splits the plane into a set $\mathcal{C}$ of connected components, each called a \emph{cell} in the arrangement $\mathcal{R}$.
Then the dual arrangement graph $G=(\mathcal{C},E)$ of $\mathcal{R}$ is a plane graph with vertex set $\mathcal{C}$ where two vertices are adjacent if and only if their corresponding cells are separated by boundary of some region in $\mathcal{R}$.

Each region $R\in\mathcal{R}$ induces a connected subgraph $G_R$ of $G$ - consisting of vertices corresponding to cells in $R$.
Moreover, it is easy to see that if $\mathcal{R}$
is non-piercing, and $\mathcal{R}_G$ is the collection of induced subgraphs of $G$ each corresponding to a region in $\mathcal{R}$,
then the subgraphs in $\mathcal{R}_G$ are non-piercing subgraphs of $G$.

We start with the following result, which shows that if a graph system is non-piercing in the plane, it is cross-free.

\begin{theorem}
\label{thm:planenp}
Let $(G,\mathcal{H})$ be a planar non-piercing system, 
then, $(G,\mathcal{H})$ is cross-free.
\end{theorem}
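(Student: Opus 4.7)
The plan is to argue by contradiction, converting a hypothetical alternating 4-pattern at a vertex into two vertex-disjoint paths that would have to cross each other in a planar embedding of the reduced graph. Suppose, for contradiction, that there exist $H, H'\in\mathcal{H}$ and a vertex $v\in V(H)\cap V(H')$ at which $H$ and $H'$ are not cross-free. Then in the reduced graph $R(H,H')$ (embedded in the plane via the inherited embedding of $G$), there are four edges $e_i=\{\tilde{v},v_i\}$, $i=1,\ldots,4$, appearing in this cyclic order around $\tilde{v}$, with $v_1,v_3\in H\setminus H'$ and $v_2,v_4\in H'\setminus H$.

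Next, I would exploit non-piercing to produce two connecting paths. Since $(G,\mathcal{H})$ is non-piercing, the induced subgraph $G[V(H)\setminus V(H')]$ is connected, so there is a path $P$ from $v_1$ to $v_3$ lying entirely in $V(H)\setminus V(H')$; because only edges internal to $V(H)\cap V(H')$ are contracted in forming $R(H,H')$, the path $P$ survives in $R(H,H')$. Symmetrically, there is a path $P'$ from $v_2$ to $v_4$ lying in $V(H')\setminus V(H)$, which also survives in $R(H,H')$. Crucially, $P$ and $P'$ are vertex-disjoint (since $V(H)\setminus V(H')$ and $V(H')\setminus V(H)$ are disjoint), and neither contains $\tilde{v}$ (which lies in $V(H)\cap V(H')$).

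I would then use the planar embedding to reach a contradiction. Consider the simple closed curve $C$ formed in the plane by concatenating $e_1$, the arc of $P$ from $v_1$ to $v_3$, and $e_3$ in reverse. This curve separates the plane into two open regions. Because $e_2$ and $e_4$ leave $\tilde{v}$ in the two sectors delimited by $e_1$ and $e_3$ in the cyclic order around $\tilde{v}$, one of $v_2,v_4$ lies in the interior of $C$ and the other in its exterior. Hence the path $P'$ from $v_2$ to $v_4$ in the planar embedding must meet $C$. However, $P'$ shares no vertex with $C$ (the interior vertices of $P$ are in $V(H)\setminus V(H')$, while the vertices of $P'$ are in $V(H')\setminus V(H)$, and $\tilde{v}\notin P'$), and in a planar embedding two vertex-disjoint arcs cannot cross. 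This is the desired contradiction, so $(G,\mathcal{H})$ must be cross-free at every vertex.

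The main subtlety, and what I expect to be the only real obstacle, is verifying that $P$ and $P'$ genuinely survive in $R(H,H')$ and remain vertex-disjoint from the rest of $C$; this rests on the fact that contraction affects only edges with both endpoints in $V(H)\cap V(H')$, so edges internal to $V(H)\setminus V(H')$ and $V(H')\setminus V(H)$ are preserved intact. Once this is in hand, the Jordan-curve step is standard.
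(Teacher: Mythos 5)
Your proposal is correct and follows essentially the same route as the paper's proof: use non-piercing to get a path $P$ in $H\setminus H'$ joining $v_1,v_3$ and a path $P'$ in $H'\setminus H$ joining $v_2,v_4$ in the reduced graph, then apply the Jordan curve formed by $P$ together with the two edges at $\tilde{v}$ to separate $v_2$ from $v_4$ and contradict planarity. The only cosmetic difference is that the paper explicitly notes the degenerate possibility $v_1=v_3$ or $v_2=v_4$ before invoking the Jordan curve, while your phrasing handles it only implicitly (if $v_2=v_4$ the separation argument already yields the contradiction), so the two arguments are essentially identical.
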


\begin{proof}
We show that if $(G,\mathcal{H})$ is crossing, then it cannot be non-piercing.
Consider an embedding of $G$ in the plane. Abusing notation, let $G$ also denote the embedding
of $G$ in the plane, which we also refer to as $G$, abusing notation slightly.
If $(G,\mathcal{H})$ is crossing, there are two subgraphs 
$H, H'\in\mathcal{H}$ and a vertex $x\in H\cap H'$ in the reduced graph $R_{G}(H, H')$ that has four neighbors
$x_1, \ldots, x_4$ in cyclic order around $x$ such that $x_1, x_3\in H\setminus H'$ and
$x_2, x_4\in H'\setminus H$. It cannot be that both $x_1=x_3$, and $x_2=x_4$ 
without violating planarity. 
So assume without loss of generality that $x_2\neq x_4$.

Since $\mathcal{H}$ is non-piercing, $H$ is connected and $H\setminus H'$ induces a connected subgraph of $G$. Further,
note that $H$ and $H'$ are non-piercing in $G$, then they remain non-piercing in $R_G(H, H')$.
Therefore, there is an $x_1$-$x_3$ path $P$ in $R_G(H, H')$ that lies in $H\setminus H'$. 
Again, since $\mathcal{H}$ is non-piercing, $H'\setminus H$ induces a connected subgraph of $G$. 
Therefore, there is a path $P'$ between $x_2$ and $x_4$ that lies in $H'\setminus H$.
Observe that $P\cup\{x_1,x\}\cup\{x,x_3\}$
induces a Jordan curve with $x_2$ and $x_4$ on either side of it. 
Thus $P$ and $P'$ intersect at a vertex that lies in $H\cap H'$, which is not possible since $P$ and $P'$ are disjoint. Therefore, there is no path $P'$ between $x_2$ and $x_4$ in $H'\setminus H$
which implies $H'\setminus H$ is not connected
and thus $\mathcal{H}$ is piercing.
\end{proof}

Note that the reverse implication does not hold. It is easy to construct examples of graph systems in the plane that are cross-free, but are not non-piercing. 
Consider the graph system consisting of a graph $K_{1,4}$ embedded in the plane,
with central vertex $v$, and leaves $a,b,c,d$ in cyclic order.
Let $H$ and $H'$ be two subgraphs where $H$ is the graph induced on $\{v,a,b\}$ and $H'$ is the graph induced on $\{v,c,d\}$. Then, $H$ and $H'$ are cross-free, but neither $H\setminus H'$ nor $H'\setminus H$ is connected.

The proof of Theorem~\ref{thm:planenp} relies on the Jordan curve theorem (see Ch. 2 in~\cite{Mohar2001GraphsOS}), and
the corresponding statement need not hold for surfaces of higher genus. For example, let $G$ be the
torus grid graph $T_{n,n} = C_n \Box C_n$, see~\cite{weisstein2016torus}.  
The subgraphs $\mathcal{H}$ are the $n$ non-contractible cycles perpendicular to the hole, 
and the $n$ non-contractible cycles parallel to the hole. Note that the system $(T_{n,n},\mathcal{H})$ is non-piercing
but not cross-free.
Any pair of parallel and perpendicular cycles intersect at a unique vertex, and therefore in the dual support,
the vertices corresponding to these two cycles must be adjacent. Therefore, 
the dual support is
$K_{n,n}$ which is not embeddable on the torus for large enough $n$.

\section{Vertex Bypassing}
\label{sec:vertexbypassing}
The basic tool we use in the construction of 
supports is the Vertex Bypassing (\VB$(.)$) operation.
Given a cross-free embedding of $G$ with respect to
$\mathcal{H}$ and a vertex $v\in V(G)$, $\VB(v)$
removes $v$ and modifies the neighborhood around $v$ 
so that the
resulting system remains cross-free and is in 
some sense \emph{simpler} than the original system.

\begin{definition}[\VB$(v)$]
\label{defn:vertexbypassing}
Let $(G,\mathcal{H})$ be a cross-free system with a cross-free embedding of $G$ with respect to 
$\mathcal{H}$ on an oriented surface $\Sigma$.
Let $N(v)=(v_0,\ldots, v_{k-1}, v_0)$ be the cyclic order of neighbors of $v$ in the embedding. The Vertex Bypassing
operation involves the following sequence of operations:
\begin{enumerate}
\item Subdivide each edge $\{v,v_i\}$ by a vertex $u_i$.
Remove the vertex $v$ and connect consecutive vertices
$u_i, u_{i+1}$ (with indices taken~$\mathrm{mod}~ k$) with a simple arc not intersecting the edges of $G$ to
construct a cycle $C=(u_0,\ldots, u_{k-1}, u_0)$
so that the resulting graph $G''$ remains embedded on $\Sigma$.
\label{step:one}
\item For each $H\in \mathcal{H}_v$, let $H'$ denote the
subgraph of $G''$ induced by $(V(H)\setminus \{v\})\cup \{u_i: \{v, v_i\}\in H\}$. 
Let $\mathcal{H}'_v =\{H': H\in\mathcal{H}_v\}$.
Let $\mathcal{H}' = (\mathcal{H}\setminus\mathcal{H}_v)\cup\mathcal{H}'_v$ (Note that the subgraphs in $\mathcal{H}'_v$ may not be connected).\label{step:two}
\item Add a set $D$ of internally non-intersecting chords in $C$ so that $\forall H\in \mathcal{H}'$,
$H$ induces a connected subgraph in $C\cup D$, and the resulting graph system $(G',\mathcal{H}')$ remains cross-free. 
\label{step:three}
\end{enumerate}
\end{definition}

\begin{figure}[ht!]   
    \centering
    \begin{minipage}{0.32\textwidth}
    \begin{center}
    \includegraphics[scale=.46]{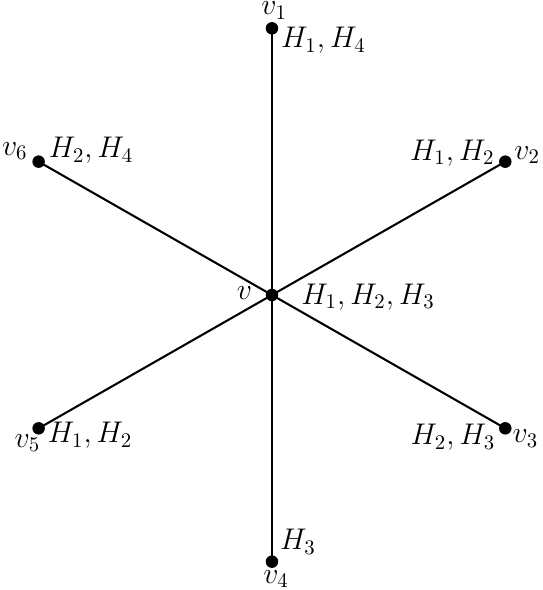}\\
    (a) Vertices in $N(v)$
            \label{vb1}
    \end{center}
    \end{minipage}
    \begin{minipage}{0.32\textwidth}
    \begin{center}
    \includegraphics[scale=.46]{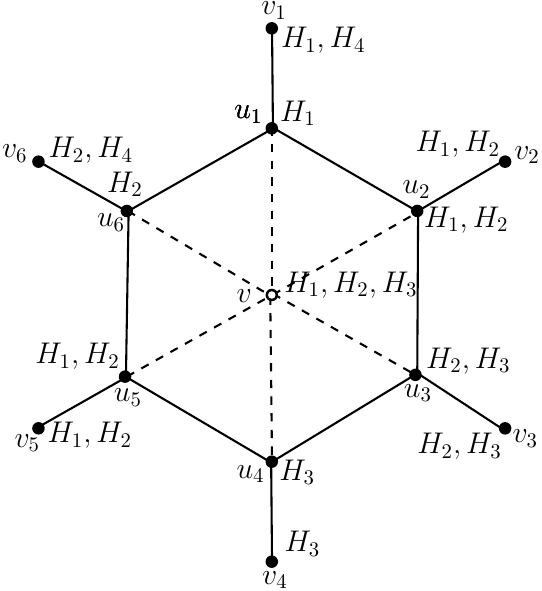}\\
    (b) Cycle $C$ on $u_i$'s
            \label{vb2}
    \end{center}
     \end{minipage}
     \begin{minipage}{0.32\textwidth}
    \begin{center}
    \includegraphics[scale=.46]{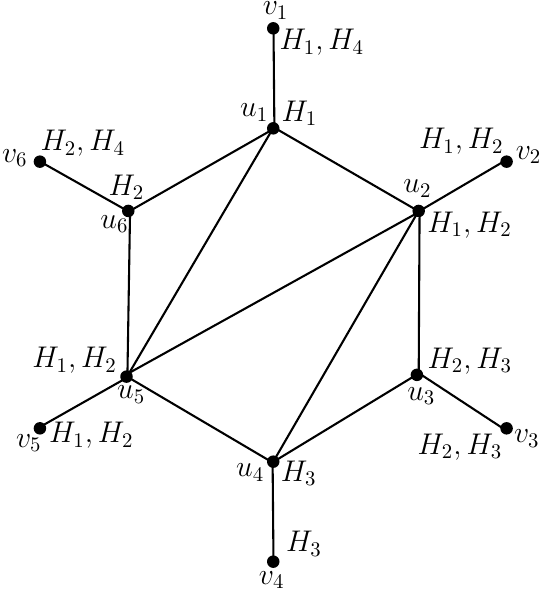}\\
    (c) Chords in $C$
    \label{fig:vb3}
    \end{center}
     \end{minipage}
     \caption{Vertex Bypassing.}\label{fig:vertexbypassing}
     \end{figure}

Fig.~\ref{fig:vertexbypassing} shows vertex bypassing at vertex $v$.
It is easy to see that the graph $G'$ obtained from $G$ 
is also embedded on $\Sigma$ as each operation preserves
the embedding.
Since we remove the vertex $v$, at the end of Step~\ref{step:two}, the subgraphs $\mathcal{H}'_v$ in $G''$ 
may be disconnected. The main challenge is to add additional edges to the graph $G''$ so that subgraphs $\HH'$ of $G'$ remain connected and the graph system $(G',\HH')$ remains cross-free.

In order to do so, we introduce the notion of $abab$-free hypergraphs.
An equivalent notion, namely $ABAB$-free hypergraphs was studied by~\cite{ackerman2020coloring}, where the elements
of the hypergraph are placed in a linear order instead of a cyclic order.

\begin{definition}[$abab$-free]
\label{defn:abab}
An abstract hypergraph $(X,\mathcal{H})$ is said to be $abab$-free if the elements of $X$ can be arranged on a cycle $C$ such that for any $H,H'\in\mathcal{H}$
there does not exist distinct $x_1, x_2, x_3, x_4\in X$ in this cyclic order around $C$ such that $x_1,x_3\in H\setminus H'$,
and $x_2,x_4\in H'\setminus H$.
\end{definition}

The relation between cross-free graph systems and
$abab$-free systems is the following:
If $G$ is embedded so that it is cross-free with respect to
$\mathcal{H}$, then for any vertex $v$, on applying $\VB(v)$,
the system $(C,\mathcal{H}'_v)$ is $abab$-free, where
$C$ is the cycle on the vertices $u_0,\ldots, u_{deg(v)-1}$
subdividing the edges $\{v,v_i\}$.

\begin{proposition}
\label{prop:crosabab}
Let $(G,\mathcal{H})$ be a cross-free graph system with
a cross-free embedding with respect to $\mathcal{H}$.
For any vertex $v\in V(G)$, on applying $\VB(v)$, 
$(C,\mathcal{H}'_v)$ is $abab$-free.
\end{proposition}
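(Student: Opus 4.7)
The plan is to prove the contrapositive: any $abab$-witness in $(C,\mathcal{H}'_v)$ would produce four edges at $v$ in $G$ that certify a crossing of the corresponding pair of subgraphs in the reduced graph, contradicting the cross-free hypothesis. The whole argument is a transport along the natural correspondence set up by Steps~\ref{step:one} and \ref{step:two} of $\VB(v)$.

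First I would fix the bookkeeping from those two steps. Step~\ref{step:one} is engineered so that the cyclic order of $u_0,\ldots,u_{k-1}$ around $C$ is exactly the cyclic order of the neighbors $v_0,\ldots,v_{k-1}$ of $v$ in the given embedding of $G$. Step~\ref{step:two} gives the membership correspondence: for each $H \in \mathcal{H}_v$ and each index $i$, one has $u_i \in V(H')$ iff $\{v,v_i\} \in E(H)$, which (since $H$ is an induced subgraph containing $v$) is the same as $v_i \in V(H)$.

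Now suppose for contradiction that there are $H_1',H_2' \in \mathcal{H}'_v$ and distinct $u_{i_1},u_{i_2},u_{i_3},u_{i_4}$ in this cyclic order on $C$ with $u_{i_1},u_{i_3}\in H_1'\setminus H_2'$ and $u_{i_2},u_{i_4}\in H_2'\setminus H_1'$. The correspondence above immediately gives four distinct neighbors $v_{i_1},v_{i_2},v_{i_3},v_{i_4}$ of $v$, appearing in this same cyclic order around $v$ in the embedding of $G$, with $v_{i_1},v_{i_3}\in H_1\setminus H_2$ and $v_{i_2},v_{i_4}\in H_2\setminus H_1$. To reach a contradiction, I would lift these four edges to the reduced graph $R(H_1,H_2)$ with its induced embedding. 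Since none of the $v_{i_j}$ lies in $H_1\cap H_2$, none is contracted, and the four edges $\{v,v_{i_j}\}$ survive as edges $\{\tilde{v},v_{i_j}\}$ in $R(H_1,H_2)$. If they still appear in the same cyclic order around $\tilde{v}$, then we have exactly the forbidden pattern in the definition of cross-freeness at $v$, contradicting the hypothesis.

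The only subtlety I would handle with care is verifying that the cyclic suborder of the four surviving edges at $\tilde{v}$ really is inherited from the cyclic order at $v$. Contraction of any edge $\{v,v_i\}$ with $v_i\in H_1\cap H_2$ merges the rotations at $v$ and at $v_i$ by splicing them at the two slots occupied by that edge, which only inserts further edges between consecutive survivors in the rotation around $\tilde{v}$ and never reorders the survivors. Iterating over all contracted edges incident (directly or indirectly) to $v$ thus preserves the cyclic suborder on the four edges of interest. This splicing lemma for contractions in $2$-cell embeddings is the one place the proof touches on surface topology rather than pure combinatorics, and it is the main—though still routine—obstacle in the argument.
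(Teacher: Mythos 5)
Your proof is correct and takes essentially the same route as the paper's: the paper's (much terser) argument likewise uses the correspondence $\mathcal{H}_{u_i}=\mathcal{H}_{e}$ for $e=\{v,v_i\}$ and concludes that an $abab$-witness on $C$ forces $H_1$ and $H_2$ to cross at $v$. The details you add—lifting the four edges to $R(H_1,H_2)$ and checking that edge contractions only splice rotations and hence preserve the cyclic suborder around $\tilde{v}$—are precisely the steps the paper leaves implicit.
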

\begin{proof}
Since the subgraphs in $\mathcal{H}$ are induced subgraphs, $\mathcal{H}_e\subseteq\mathcal{H}_v$. 
Let $C=(u_0,\ldots, u_{deg(v)-1})$ be the cycle on the subdivided vertices added
on applying $\VB(v)$. 
By construction $\mathcal{H}_{u_i}=\mathcal{H}_{e}$ where $e=\{v,v_i\}$, and
$\mathcal{H}_e\subseteq\mathcal{H}_v$. If $(C,\mathcal{H}'_v)$ is not $abab$-free,
let $H_1, H_2$ be subgraphs in $\mathcal{H}'_v$ that are not $abab$-free.
This implies that $H_1$ and $H_2$ are crossing at $v$.
\end{proof}

By Proposition~\ref{prop:crosabab} therefore, 
the problem of adding a set of non-intersecting chords $D$ in Step~\ref{step:three} of Vertex Bypassing 
reduces to the following: Given an $abab$-free embedding of an $abab$-free hypergraph, can we add a set of non-intersecting
chords in $C$ such that each subgraph is connected? We show in the following lemma, whose proof is in Section~\ref{sec:nonblocking}
that we can always add such chords.

\begin{restatable}{lem}{nblockchord}
\label{lem:nblock}
Let $C$ be a cycle embedded in the plane, and let $\mathcal{K}$ be a set of $abab$-free subgraphs of $C$.
Then, we can add a set $D$ of non-intersecting chords in $C$ such that each $K\in\mathcal{K}$ 
induces a connected subgraph of $C\cup D$. Further, the set $D$ of non-intersecting chords to add can be computed in time $O(mn^4)$ where $m=|\KK|$, and $n=|C|$.
\end{restatable}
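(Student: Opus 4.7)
The proof is by induction on $n = |V(C)|$, with the base case $n \le 2$ being trivial. The plan for the inductive step is to identify a single chord $(u,v)$ that can be committed to, splitting the disk bounded by $C$ along the chord into two sub-disks and recursing on each. Call $(u,v)$ \emph{admissible} if some $K\in\mathcal{K}$ contains both $u$ and $v$ in distinct arcs on $C$, and, for every $K'\in\mathcal{K}$ whose vertices appear strictly on both sides of $(u,v)$ along $C$ without already being connected inside $C$, both endpoints $u$ and $v$ lie in $K'$. Given an admissible chord I add it to $D$ and split $C$ into the two cycles $C_1,C_2$ sharing the new edge $(u,v)$, then recursively solve the sub-instances $(C_i,\mathcal{K}_i)$ with $\mathcal{K}_i=\{K\cap V(C_i):K\in\mathcal{K}\}$. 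Restriction to a sub-arc preserves the $abab$-free condition, and any chord produced by the recursion on one side lies strictly on that side of $(u,v)$, so the combined chord set is non-crossing and makes every $K$ connected.

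The heart of the argument is producing an admissible chord whenever some $K\in\mathcal{K}$ has more than one arc (otherwise there is nothing to do). The plan is a minimality argument: among all pairs $(K,(u,v))$ with $u,v\in K$ on distinct arcs of $K$, pick one that minimizes the number of vertices strictly between $u$ and $v$ along the shorter arc of $C$; call this shorter arc $G$. The minimization forces $G\cap V(K)=\emptyset$, so $u$ and $v$ are the adjacent boundary vertices of two consecutive arcs of $K$. Suppose, for contradiction, that $(u,v)$ is not admissible: a witness $K'$ has a vertex $x$ strictly inside $G$ and a vertex $y$ on the longer side, with $u\notin K'$ or $v\notin K'$, and $K'$ is not already connected on $C$ through the chord. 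In the cyclic order $u,x,v,y$, the $abab$-freeness of $\{K,K'\}$ immediately forbids $y\notin K$ (else the pattern $K\setminus K',K'\setminus K,K\setminus K',K'\setminus K$ is realized), so $y\in K\cap K'$. A case analysis on whether $v\in K'$ then locates a pair of vertices of $K'$ belonging to distinct arcs of $K'$ and both lying strictly inside $G$ --- a pair with strictly smaller short-arc separation than $(u,v)$, contradicting the minimal choice.

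The hardest step of the plan is exactly this final subcase, where only one of $u,v$ lies in $K'$: here the nested smaller gap is a gap of $K'$ whose short-arc length is not a priori smaller than $|G|$, and I expect to resolve it by strengthening the minimization lexicographically --- first on short-arc length, then on a secondary measure such as the number of $\mathcal{K}$-subgraphs that exhibit a gap of that length --- so that the witness $K'$ always forces a strict decrease. For the algorithmic statement, implementing the chord search by enumerating $O(n^2)$ candidate chords and verifying admissibility against each of the $m$ subgraphs in $O(n)$ time gives $O(mn^3)$ work per recursive call, and the recursion depth is $O(n)$ since each recursive call operates on a strictly smaller cycle, yielding the claimed $O(mn^4)$ bound overall.
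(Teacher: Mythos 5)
Your outer skeleton --- commit to a single chord joining two disjoint runs of some $K\in\mathcal{K}$ whose addition cannot hurt any other subgraph, split the cycle along it, recurse on the two sides, and glue --- is the same as the paper's, and the complexity accounting is fine. The entire content of the lemma, however, is the existence of such a chord, and that is exactly where your proposal has a genuine gap (which you partly flag yourself). Two concrete problems. First, your admissibility condition is stronger than needed: to glue the two recursive solutions it suffices that every subgraph with runs strictly on both sides of the chord contain \emph{at least one} endpoint (the paper's ``non-blocking'' condition); you demand both endpoints, and you never establish that a chord with this stronger property exists. Second, and more seriously, the minimal-gap selection on which your existence argument rests is simply false, not just incomplete in one subcase. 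Take $C$ on vertices $1,\dots,10$ in cyclic order, $K=\{1,3,6,7\}$ and $K'=\{2,6\}$; this family is $abab$-free, since $K'\setminus K=\{2\}$ has only one element. The unique minimum-gap pair is $(u,v)=(1,3)$ for $K$, with $G=\{2\}$, and your lexicographic tie-break cannot alter the choice because the minimizer is unique. Yet the chord $\{1,3\}$ has $K'$ strictly on both sides with \emph{neither} endpoint in $K'$: it is inadmissible in your sense, blocking in the paper's sense, and in fact can never be committed to, since any later chord connecting $2$ to $6$ would have to cross it. Moreover your claimed descent --- producing two vertices of $K'$ in distinct runs of $K'$ lying strictly inside $G$ --- is impossible here, as $|G|=1$. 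So the step you call the hardest is not repairable by the proposed secondary measure; the selection rule itself must change.

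The paper's proof of the key existence statement (Lemma~\ref{lem:keylem2}) uses genuinely more machinery than a one-shot minimality argument: it fixes a subgraph $K_0$ that is \emph{minimal under containment}, separates one run $A$ of $K_0$ from the union $B$ of its other runs, and advances a chord $a_\ell b_j$ between $A$ and $B$ step by step, maintaining a two-part invariant (every subgraph blocked by the current chord is contained in $K_0$ on the near arc and has a vertex outside $K_0$ on the far arc). The $abab$-freeness is then invoked against the \emph{previously} blocked subgraph $K_i$, not only against $K_0$, to show the invariant survives each advance, and the terminal chord $a_rb_s$ is shown to be non-blocking using the minimality of $K_0$. Your proposal contains no substitute for this argument, so the lemma remains unproven as written.
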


With Lemma~\ref{lem:nblock} at hand, we can obtain the desired system $(G',\mathcal{H}')$.

\begin{lemma}
\label{lem:keylem}
Let $(G,\mathcal{H})$ be a cross-free system with a cross-free embedding of $G$ with respect to $\mathcal{H}$. 
Suppose we apply \VB$(v)$ to a vertex $v\in V(G)$. Then, each subgraph $H$ in $(G',\mathcal{H}')$ is connected. Further,
\VB$(v)$ can be done in time $O(|\HH||V(G)|^4)$.
\end{lemma}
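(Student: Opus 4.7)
The plan is to decompose the statement into two independent claims: (i) every subgraph in $\mathcal{H}'$ is connected in $G'$, and (ii) the whole bypassing operation runs within the stated time. For (ii) I would simply observe that Steps~\ref{step:one} and \ref{step:two} of Definition~\ref{defn:vertexbypassing} touch only $O(\deg(v))$ vertices and $O(|\mathcal{H}_v|\deg(v))$ edges of the subgraphs, while the only nontrivial cost is Step~\ref{step:three}, which is an invocation of Lemma~\ref{lem:nblock} on the cycle $C$ of length at most $\deg(v)\le |V(G)|$ with at most $|\mathcal{H}_v|\le |\mathcal{H}|$ hyperedges; this gives $O(|\mathcal{H}||V(G)|^4)$ as required.

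For (i), the subgraphs in $\mathcal{H}\setminus\mathcal{H}_v$ are entirely untouched by Steps~\ref{step:one}--\ref{step:three}, so they remain connected trivially. The interesting case is $H\in\mathcal{H}_v$. I would first set up the decomposition of $H\setminus\{v\}$ into its connected components $C_1,\ldots, C_\ell$ inside $G$. Because $H$ is connected and contains $v$, every $C_j$ must have an edge in $H$ to $v$, i.e.\ it contains some neighbour $v_i$ of $v$ with $\{v,v_i\}\in E(H)$; by construction of $H'$ in Step~\ref{step:two}, the corresponding subdivision vertex $u_i$ is in $H'$ and is joined to $v_i\in C_j$ by a subdivision edge of $G''$. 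Hence each component $C_j$ of $H\setminus\{v\}$ is attached in $H'$ to a nonempty set $U_j\subseteq V(C)\cap V(H')$ of cycle-vertices.

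The remaining task is to verify that all of the cycle-vertices $\bigcup_j U_j = V(C)\cap V(H')$ lie in a single connected component of the induced subgraph of $C\cup D$. This is exactly where Proposition~\ref{prop:crosabab} and Lemma~\ref{lem:nblock} enter: applying $\VB(v)$ places the hypergraph $(C,\mathcal{H}'_v)$ into $abab$-free position, so Lemma~\ref{lem:nblock} guarantees that the set $D$ of non-crossing chords added in Step~\ref{step:three} can be chosen so that $H'\cap (C\cup D)$ is connected for every $H\in\mathcal{H}_v$ simultaneously. Combined with the attachment of each $C_j$ to $U_j$ via subdivision edges, this makes $H'$ itself connected in $G'$. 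The cross-freeness of the resulting embedded system is already asserted inside Step~\ref{step:three}'s definition and is guaranteed by the proof of Lemma~\ref{lem:nblock}, so I do not need to re-derive it here.

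The only real obstacle is verifying that the hypotheses of Lemma~\ref{lem:nblock} truly apply, and this is fully handled by Proposition~\ref{prop:crosabab}; everything else is a routine verification. I expect the write-up to be short: an enumeration of the components $C_j$ of $H\setminus\{v\}$, the observation that each contains a neighbour of $v$ in $E(H)$, an appeal to Lemma~\ref{lem:nblock} through Proposition~\ref{prop:crosabab}, and finally a one-line time analysis dominated by the chord-insertion step.
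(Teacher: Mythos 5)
Your proposal is correct and follows essentially the same route as the paper: reduce Step~3 to Lemma~\ref{lem:nblock} via Proposition~\ref{prop:crosabab}, note that subgraphs in $\mathcal{H}_v$ are altered only at the subdivision vertices (your component-by-component gluing of $H\setminus\{v\}$ just spells out the paper's one-line version of this), and charge the running time to the chord-computation of Lemma~\ref{lem:nblock} with $|C|\le\deg(v)\le|V(G)|$ and $|\mathcal{H}'_v|\le|\mathcal{H}|$.
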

\begin{proof}
Let $C$ be the cycle added on the subdividing vertices around vertex $v$. Since $(G,\mathcal{H})$ is cross-free, 
by Proposition~\ref{prop:crosabab},
the subgraphs $\{H\cap C: H\in\mathcal{H}'_v\}$ 
satisfy the $abab$-free property on $C$. Therefore, by Lemma~\ref{lem:nblock}, there is a collection $D$ of non-intersecting chords such that 
each subgraph in $\mathcal{H}'_v$ induces a connected subgraph of $C\cup D$. Hence, each subgraph $H\in \mathcal{H}'$ is a connected subgraph of $G'$ since each $H\in\mathcal{H}_v$ is modified only in the vertices of subdivision. Since Lemma~\ref{lem:nblock} guarantees that the set $D$ of 
non-intersecting chords to add can be computed in time $O(|\HH||V(G)|^4)$, it follows that \VB$(v)$ can be done in $O(|\HH||V(G)|^4)$ time.
\end{proof}

In the following, we argue that if $(G,\mathcal{H})$ is cross-free, then the resulting system $(G',\mathcal{H}')$ obtained
on applying $\VB(v)$ at a vertex $v\in V(G)$ is cross-free.

\begin{lemma}
\label{lem:remcrossfree}
Let $(G,\mathcal{H})$ be a cross-free system and let
$v\in V(G)$. Let $(G',\mathcal{H}')$ be the system obtained on applying $\VB(v)$. Then, $(G',\mathcal{H}')$ is cross-free.
\end{lemma}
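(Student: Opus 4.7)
The plan is to verify cross-freeness at every vertex $w \in V(G')$ and every pair $H, H' \in \mathcal{H}'$ with $w \in H \cap H'$, by partitioning $V(G')$ into three classes: (i) old vertices $w \in V(G)\setminus(N(v)\cup\{v\})$ whose incident-edge structure is untouched by the bypass, (ii) the neighbors $v_i$ of $v$, whose edge $\{v,v_i\}$ was subdivided by $u_i$, and (iii) the new cycle vertices $u_0,\ldots,u_{k-1}$. For each class I would relate the reduced graph $R_{G'}(H,H')$ locally around the vertex to the reduced graph $R_G(H_1,H_2)$ of the originating pair $H_1,H_2\in\mathcal{H}$.

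For Class (i) cross-freeness is immediate: the edges incident to $w$, their cyclic order in the embedding, and the $H/H'$-membership of their other endpoints are unchanged from $G$, so the edges at $\tilde w$ in $R_{G'}(H,H')$ reproduce exactly those at $\tilde w$ in $R_G(H_1,H_2)$. For Class (ii), I would observe that around $v_i$ the only modification is that the edge $\{v,v_i\}$ has been replaced by $\{u_i,v_i\}$; the cyclic order of edges at $v_i$ is preserved. A short case analysis on whether $v\in H_1\cap H_2$, $v\in (H_1\triangle H_2)$, or $v\notin H_1\cup H_2$ shows that the $H\cap H'$-contraction in $R_{G'}$ behaves at $\{u_i,v_i\}$ exactly as the $H_1\cap H_2$-contraction behaves at $\{v,v_i\}$ in $R_G$, and the membership of the surviving neighbor of $\tilde{v_i}$ in $H,H'$ agrees with its membership in $H_1,H_2$. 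Hence cross-freeness at $v_i$ lifts from $(G,\mathcal{H})$ to $(G',\mathcal{H}')$.

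The genuine work, and the main obstacle, is Class (iii). Any pair $H,H'\in\mathcal{H}'$ with $u_i\in H\cap H'$ necessarily comes from a pair $H_1,H_2\in\mathcal{H}_v$, and by Proposition~\ref{prop:crosabab} the induced hypergraph $(C,\mathcal{H}'_v)$ is $abab$-free. I would argue that any internally non-intersecting chord set $D$ in $C$ that realizes connectivity for $\mathcal{H}'_v$ (as produced by Lemma~\ref{lem:nblock}) automatically forces cross-freeness at each $u_i$. The idea is that the non-crossing family $D$ induces a laminar-like decomposition of the open disk bounded by $C$; if four neighbors $x_1,x_2,x_3,x_4$ of $\tilde{u_i}$ in $R_{G'}(H,H')$ appeared in cyclic order with $x_1,x_3\in H\setminus H'$ and $x_2,x_4\in H'\setminus H$, then two of the underlying chords (or $C$-arcs) would have to interleave inside the disk, which either contradicts the non-intersecting choice of $D$ or projects to an $abab$ witness on $C$, contradicting Proposition~\ref{prop:crosabab}. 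The neighbor $v_i$ of $u_i$ outside the disk is handled separately, using that its $H/H'$ labeling is inherited from the edge $\{v,v_i\}$ and cannot extend an $ab$ pattern to an $abab$ pattern without violating $abab$-freeness on $C$.

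The delicate point I anticipate is handling contractions at $u_i$ when several of $v_i,u_{i-1},u_{i+1}$, or chord-neighbors lie in $H\cap H'$, since then the "four neighbors" in the definition of cross-freeness refer to endpoints of contracted paths rather than to edges of $G'$. To make this rigorous I would phrase the argument in terms of the rotation system of the embedding, showing that any ABAB witness around $\tilde{u_i}$ in $R_{G'}(H,H')$ pulls back, after undoing the contractions, to an $abab$ witness on the cyclic order of $C$, which is ruled out by Proposition~\ref{prop:crosabab}. This reduction, together with Classes (i)--(ii), establishes that $(G',\mathcal{H}')$ is cross-free at every vertex, completing the proof.
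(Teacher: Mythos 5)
Your overall strategy -- checking cross-freeness vertex by vertex, comparing rotation systems/cyclic patterns in the reduced graphs before and after $\VB(v)$ -- is the same as the paper's, and your Classes (i) and (ii) match what the paper does when $v\notin H_1\cap H_2$. The gap is in Class (iii), precisely in the "delicate point" you flag. When $v\in H_1\cap H_2$ and some neighbor $v_i$ also lies in $H_1\cap H_2$, the vertex $u_i$ lies in $H\cap H'$ together with $v_i$, so the contracted blob of $R_{G'}(H,H')$ containing $u_i$ is not confined to the cycle $C$ and its chords: it escapes the disk through $v_i$ and may swallow a large connected piece of $H_1\cap H_2$. An ABAB witness around this blob can then consist partly or entirely of edges incident to blob vertices far from $C$. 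Such a witness does not "pull back, after undoing the contractions, to an $abab$ witness on the cyclic order of $C$", so Proposition~\ref{prop:crosabab} cannot rule it out -- that proposition only records the $H_1/H_2$ labels of the edges incident to $v$ itself, which is strictly weaker than cross-freeness at $v$, a property of the whole blob of $H_1\cap H_2$ containing $v$ in $R_G(H_1,H_2)$. Your Class (ii) argument does not rescue this case either, because it is stated purely locally (the swap of $\{v,v_i\}$ for $\{u_i,v_i\}$), whereas the old blob automatically contains every $v_j\in H_1\cap H_2$ via $v$, while the new blob may fragment and its boundary contains chord and cycle edges with no single-edge counterpart at $v_i$.

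The fix is the paper's move: for any such reduced vertex $\tilde{y'}$ of $R_{G'}(H,H')$ containing some $u_i\in H\cap H'$, show that its cyclic pattern of $(H\setminus H')/(H'\setminus H)$ labels is a subsequence of the cyclic pattern around $\tilde{v}$ in $R_G(H_1,H_2)$. Here every boundary edge of the new blob is either an untouched edge of $G$ (hence an edge around the old blob, in the same relative position) or a cycle/chord edge to some $u_b\notin H\cap H'$, which corresponds to the old edge $\{v,v_b\}$ with the same label; the non-crossing chords inside the disk guarantee the cyclic order is preserved. Cross-freeness of $H_1,H_2$ at $v$ in the original system -- not merely $abab$-freeness of $(C,\mathcal{H}'_v)$ -- then yields cross-freeness at $\tilde{y'}$. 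Your laminar/interleaving observation about $D$ is the right ingredient for the "inside the disk" portion, but as written your reduction target is too weak, so the argument as proposed would fail exactly in the case where the blob leaves the disk.
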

\begin{proof}
By Lemma \ref{lem:keylem}, the subgraphs in $\HH'$ are connected subgraphs of $G'$.
We argue that $(G',\mathcal{H}')$ is cross-free.
Let the cyclic sequence of the neighbors of $v$ be $(v_0,\ldots, v_{k-1})$.
Similarly, let $(u_0,\ldots, u_{k-1})$ denote the cyclic sequence of the vertices in $G'$
where $u_i$ is the vertex subdividing the edge $\{v,v_i\}$ in $\VB(v)$.

Consider two subgraphs $H_1, H_2\in\mathcal{H}$. We will show that $H'_1$ and $H'_2$ are cross-free,
where $H'_1$ and $H'_2$ are respectively, the subgraphs in $\HH'$ corresponding to $H_1$ and $H_2$ in $\mathcal{H}$.
For a vertex $x\in V(G)$, let $x'$ denote its copy in $V(G')$.
We let $\tilde{x}$ denote its corresponding vertex in the reduced graph $R_G(H_1, H_2)$,
and $\tilde{x'}$, the vertex in $R_{G'}(H'_1, H'_2)$ corresponding to vertex $x'\in V(G')$.

For $\tilde{x}\in V(R_G(H_1, H_2))$, let $(A_0,\ldots, A_{\ell}-1)$ be the \emph{cyclic-pattern} around $\tilde{x}$ where each $A_i$ is a subset of $\{H_1,H_2\}$, i.e.,
if $\tilde{z_0},\ldots, \tilde{z}_{\ell-1}$ are the neighbors of $\tilde{x}$ in cyclic order, then $A_i$ corresponds
to the subset of $\{H_1, H_2\}$ containing $\tilde{z_i}$. We use an identical notation for $R_{G'}(H'_1, H'_2)$.

Consider an $x'\in V(G')$ and its corresponding vertex in $R_{G'}(H'_1, H'_2)$.
First, suppose $v\not\in V(H_1)\cap V(H_2)$. If $x'\not\in\{u_0,\ldots, u_{k-1}\}$, then the cyclic-pattern
at $\tilde{x'}$ in $R_{G'}(H'_1, H'_2)$ 
is isomorphic to the cyclic-pattern at $\tilde{x}$ (replacing $H'_j$ by $H_j$, $j=1,2$), where $x$ is
the vertex in $G$ corresponding to $x'$. 
Since $H_1$ and $H_2$ are cross-free at $x$, so are $H'_1$ and $H'_2$ at $x'$. Since the subgraphs at $u_i$ are a subset
of the subgraphs at $v$, it implies that no $u_i$ is contained in both $H_1$ and $H_2$. Hence, $H_1$ and $H_2$
are cross-free at each $u_i$.

Now, if $v\in V(H_1)\cap V(H_2)$, then for any vertex $y'$ in $G'$ so that its copy $y$ in $G$ is not connected
to $v$ via a path that lies in $V(H_1)\cap V(H_2)$, the cyclic-pattern at $\tilde{y'}$ and $\tilde{y}$ are isomorphic.
If $y$ lies on a path in $V(H_1)\cap V(H_2)$ to $v$, then $\tilde{y}=\tilde{v}$ in $R_G(H_1, H_2)$. In this case, observe that the cyclic pattern at $\tilde{y'}$ is a sub-sequence of the cyclic pattern at $\tilde{y}$ since $\tilde{y'}=\tilde{u_i}$ for some $u_i\in H'_1\cap H'_2$.
Since $H_1$ and $H_2$ are non-crossing at $v$, it implies that $H'_1$ and $H'_2$ are non-crossing at $y'$. 

Since $H'_1$ and $H'_2$ are non-crossing at all vertices of $G'$, and $H_1,H_2\in\HH$ were arbitrary, it implies
$(G',\mathcal{H}')$ is cross-free.
\end{proof}

\subsection{Non-blocking Chords in $abab$-free Hypergraphs}
\label{sec:nonblocking}
In this section, we prove Lemma~\ref{lem:nblock}. Let $C=(x_0,\ldots, x_{k-1},x_0)$ be a cycle embedded in the plane 
with vertices labeled in clockwise order and
let $\mathcal{K}$ be a collection of $abab$-free subgraphs
on $C$. Since the subgraphs in $\mathcal{K}$ may not
be connected, each $K\in\mathcal{K}$ induces a collection
of \emph{runs} on $C$, where a run is a consecutive sequence of vertices on $C$ that are contained in $K$.

For $i,j\in\{0,\ldots, k-1\}$, let $[x_i, x_j]$ denote the vertices on the arc from $i$ to $j$ in clockwise order, 
with indices taken $\mathrm{mod}~k$.
Similarly, we use  $(x_i,x_j)$ to denote the open arc, i.e., consisting of the vertices on the arc from $i$ to $j$
except $x_i$ and $x_j$.
We use $(x_i,x_j]$ to denote the semi-open arc consisting of sequence of vertices from $i$ to $j$, but excluding the vertex $i$.

The addition of a chord $d=\{x_i, x_j\}$ divides $C$ into two open arcs - $(x_i,x_j)$ and $(x_j, x_i)$.
The chord $d$ \emph{blocks} a subgraph $K\in\mathcal{K}$ 
if both open arcs contain a run of $K$, and neither
$x_i$ nor $x_j$ are in $K$. 
Such a chord $d$ is called a \emph{blocking chord}. 
If $d$ does not block any subgraph in $\mathcal{K}$, it is called a \emph{non-blocking} chord. 
We show in Lemma~\ref{lem:keylem2} that there always exists a non-blocking chord $d$ 
that connects two disjoint runs of some subgraph $K\in\mathcal{K}$. 

\begin{lemma}
\label{lem:keylem2}
Let $C$ be a cycle embedded in the plane, and let $\mathcal{K}$ be a collection of $abab$-free subgraphs in the embedding of $C$.
Then, for some disconnected $K\in\mathcal{K}$, there exists a non-blocking chord joining two disjoint runs of $K$. Further, such a chord can
be computed in time $O(mn^3)$ where $m=|\KK|$ and $n=|C|$.
\end{lemma}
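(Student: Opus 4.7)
The plan is to use a minimality argument over \emph{candidate pairs} $(K, A)$, where $K \in \mathcal{K}$ is disconnected and $A = (x_i, x_j)$ is a \emph{gap} of $K$---an open arc of $C$ whose endpoints lie in distinct runs of $K$ and whose interior is disjoint from $V(K)$. I pick such a pair minimizing $|A|$, and claim that the chord $\{x_i, x_j\}$ is non-blocking; this gives the chord required by the lemma.

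Suppose for contradiction the chord blocks some $K' \in \mathcal{K}$, so $x_i, x_j \in K \setminus K'$ and $K'$ meets both open arcs. Since $A$ contains no vertex of $K$, any vertex of $K'$ inside $A$ must lie in $K' \setminus K$. The $abab$-free condition applied to $K, K'$ then rules out any vertex of $K' \setminus K$ in the complementary arc $(x_j, x_i)$: such a vertex, together with a $K' \setminus K$ vertex inside $A$ and with $x_i, x_j \in K \setminus K'$, would realize the forbidden $a,b,a,b$ cyclic pattern. Hence every vertex of $K'$ outside $A$ lies in $K \cap K'$, and in particular $K'$ is disconnected with at least one run in $A$ contained in $K' \setminus K$.

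If $K'$ has two or more runs inside $A$, then the arc between two consecutive ones is a gap $B$ of $K'$ strictly contained in $A$; the candidate pair $(K', B)$ satisfies $|B| < |A|$, contradicting minimality. The remaining case---$K'$ has exactly one run $R' = [x_p, x_q] \subseteq A$---is the main obstacle. To handle it I propose to substitute $(K, A)$ by the candidate pair $(K', B)$ with $B = (x_q, x_s)$, where $x_s$ is the first vertex of $K'$ clockwise past $R'$; by the structure above, $x_s \in K \cap K'$, so the chord $\{x_q, x_s\}$ has an endpoint in $K$ and cannot block $K$. Iterating this substitution removes a subgraph from the set of possible blockers at each round, and finiteness of $\mathcal{K}$ forces the chain to terminate at a non-blocking chord. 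The hard part of the argument is ensuring that this iteration cannot cycle, which requires maintaining an appropriate potential (for instance, tracking the list of subgraphs whose non-blocking has been certified).

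For the running time, enumerating the $O(mn)$ candidate pairs and testing each chord for the blocking condition in $O(mn)$ time, together with an additional $O(n)$ factor from the Case~2 iteration, yields the claimed $O(mn^3)$ bound.
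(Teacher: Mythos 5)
Your opening analysis is sound: choosing a candidate pair $(K,A)$ with $A$ a minimum-length gap, and using the $abab$-free condition to show that any blocker $K'$ of the chord $\{x_i,x_j\}$ has all of its vertices outside $A$ inside $K\cap K'$, is correct, and your Case~1 (two runs of $K'$ inside $A$) does contradict minimality. But the proof has a genuine gap exactly where you flag it: Case~2, a blocker $K'$ with a single run $R'=[x_p,x_q]\subseteq A$, is not resolved. The substitution to the pair $(K',B)$ with $B=(x_q,x_s)$ only certifies that the \emph{current} chord cannot block $K$; it does not remove $K$ from the set of possible blockers of \emph{future} chords, since later chords have different endpoints and $K$ (or any previously ``certified'' subgraph) may block them again. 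Moreover, after the first substitution the new gap $B$ is in general larger than $A$ and is not minimal, so the structural facts you derived from minimality of $|A|$ (every blocker lies in $K\cap K'$ outside the gap, at most one run inside, etc.) no longer apply, and even the Case~1 argument cannot be rerun. Without a concrete potential or invariant the iteration has no proof of progress or termination --- you explicitly defer this, but it is the heart of the lemma, not a finishing detail; the time bound inherits the same problem.

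For comparison, the paper closes this gap differently: it fixes a subgraph $K_0$ that is minimal under \emph{containment} (not a minimal gap), splits its runs into one run $A$ and the union $B$ of the rest, and advances a single chord $a_\ell b_j$ monotonically (clockwise along $A$, counterclockwise along $B$, always jumping to the first vertex of a currently blocked subgraph), so termination is automatic. Correctness is carried by a two-part invariant on every subgraph blocked by the current chord --- its vertices in the near arc $(a_\ell,b_j)$ lie inside $K_0$, and it has a vertex outside $K_0$ in the far arc $(b_j,a_\ell)$ --- which is preserved using the $abab$-free condition (your Case~2 configuration is precisely what the invariant is designed to control), and minimality of $K_0$ under containment plus the invariant forces the terminal chord $a_rb_s$ to be non-blocking. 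If you want to salvage your approach, you would need to supply an analogue of such an invariant and a monotone quantity for your substitution chain; as written, the argument is incomplete.
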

\begin{proof}
Assume wlog that each subgraph $K\in\mathcal{K}$ induces at least two runs in $C$,
and no two subgraphs contain the same subset of vertices of $C$.
Define a partial order $\prec_C$ on $\mathcal{K}$, where for $K, K'\in\mathcal{K}$, 
$K\prec_C K'$ iff $K\subset K'$. 
Let $K_0\in \mathcal{K}$ be a minimal subgraph with respect to the order $\prec_C$.

Let $K_0^0, \ldots, K_0^q$ denote the runs of $K_0$. We let $A$ denote the run $K_0^0$ and
let $B = \cup_{i=1}^q K_0^i$. For ease of exposition, we assume $C$ is drawn such that 
$A$ lies in the lower semi-circle of $C$ and that $B$ lies in the upper semi-circle of $C$, where the runs $K_0^1,\ldots, K_0^q$ appear in counter-clockwise order. Let $a_0,\ldots, a_r$ denote the
vertices of $A$ in clockwise order and let $b_0,\ldots, b_s$ denote the vertices of $B$ in 
counter-clockwise order. See Fig.~\ref{fig:nbchord} (a).

We show that there is a chord $d$ from a vertex in $A$ to a vertex in $B$ that
is non-blocking. In order to do so, we start with the chord $d_0 = a_0b_0$, and construct
a sequence of chords until we either find a non-blocking chord, or we end up with the chord $d_k = a_rb_s$,
which will turn out to be non-blocking. 
Having constructed chords $d_0,\ldots, d_{i-1}$, where $d_{i-1} = a_{\ell}b_{j}$, $d_i$ 
will be either the chord $a_{\ell}b_{j'}$ or
$a_{\ell'}b_{j}$, where $j'>j$ and $\ell' > \ell$. 

Next, we describe the construction of the chords. Each chord $d$ we construct satisfies the following invariant:
If $K$ is a subgraph blocked by a chord $d=a_{\ell}b_{j}$, then 
\begin{enumerate}
\item[$(i)$] The vertices of $K$ are contained in the vertices of $K_0$ in the arc $(a_{\ell},b_{j})$, and
\item[$(ii)$]
There is a vertex $k\in K\setminus K_0$ in the arc $(b_{j}, a_{\ell})$.
\end{enumerate}

Let $d_0$ denote the chord $a_0b_0$. If $d_0$ is non-blocking, we are done. Otherwise,
if any $K_1\in \mathcal{K}$ is blocked by $d_0$, there is a vertex $k\in K_1$ that lies in $(b_0,a_0)$. Since
$(b_0,a_0)$ does not contain a vertex of $K_0$, this implies $k\in K_1\setminus K_0$, and hence $d_0$
satisfies condition $(ii)$ of the invariant.
Since we assumed the subgraphs $\mathcal{K}$ are $abab$-free, 
this implies that any vertex of $K_1$ in arc $(a_0,b_0)$ is contained in $K_0$. 
This ensures that condition $(i)$ of the invariant is satisfied by $d_0$.

Having constructed $d_0=a_0b_0,\ldots, d_{i-1}=a_{\ell}b_j$, each of which satisfy conditions $(i)$ and $(ii)$
of the invariant, 
we construct $d_i$ as follows: We simultaneously scan the vertices of $B$ in counter-clockwise order from $b_j$, 
and the vertices of $A$ in clockwise order from $a_{\ell}$ 
until we find the
first vertex $x$ that belongs to a subgraph blocked by $d_{i-1}$. Let $K_i$ denote this subgraph.
If $x=b_{j'}\in B$, we set
$d_i = a_{\ell}b_{j'}$. Otherwise, $x=a_{\ell'}\in A$, and we set $d_i = a_{\ell'}b_j$. Assume without loss
of generality that $d_i = a_{\ell}b_{j'}$ (the other case is similar).

\begin{figure}[ht!]
\begin{minipage}[t]{0.45\textwidth}
\begin{center}
\includegraphics[scale=.5]{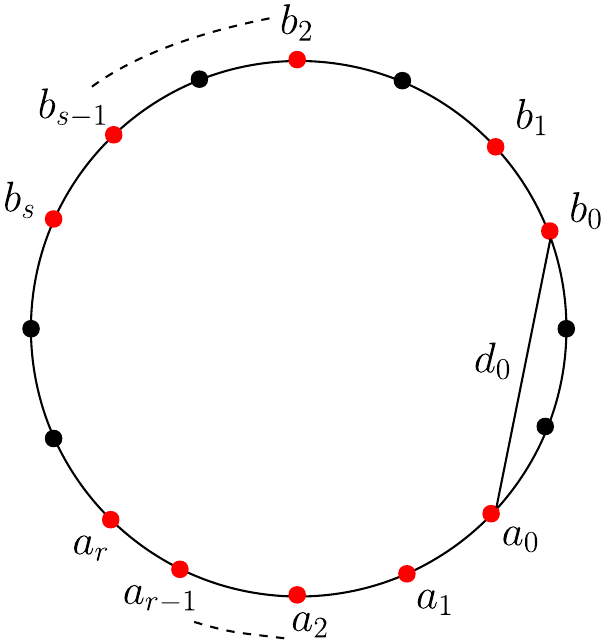}\\
(a) Ordering the vertices of $K_0$ in sets $A$ and $B$.
Here, $A=\{a_0,\ldots,a_r\}$ and $B=\{b_0,\ldots,b_s\}$
\end{center}
\end{minipage}
\hfill
\begin{minipage}[t]{0.45\textwidth}
\begin{center}
\includegraphics[scale=.5]{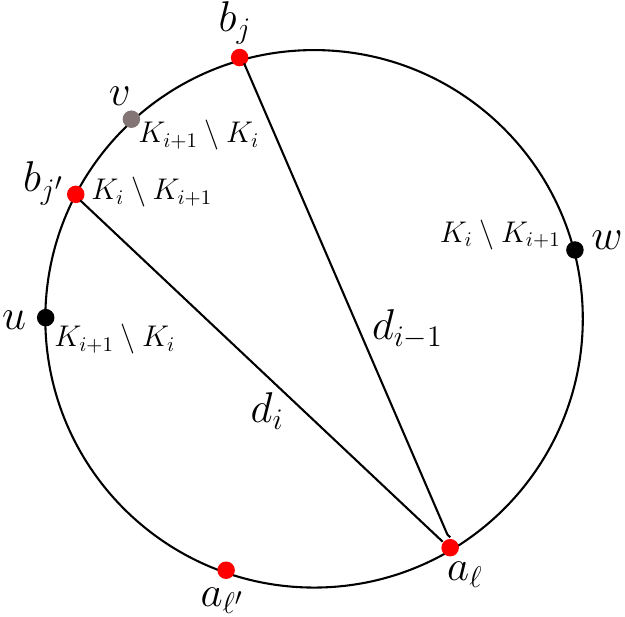}\\
(b) Adding chords between $A$ and $B$.
If $K_{i+1}\setminus K_0\neq \emptyset$ in $(a_\ell,b_{j'})$, vertices $u,b_{j'},v,w$ form $abab$ in $K_{i+1}$ and $K_i$
\end{center}
 \end{minipage}\vspace{0.5cm}
 \caption{Finding a non-blocking chord to join two disjoint runs of $K_0$.}\label{fig:nbchord}
 \end{figure}

If $d_i$ is a non-blocking chord, we are done. Otherwise, let $K_{i+1}$ denote a subgraph blocked by $d_i$. Then,
both the arcs $(a_{\ell}, b_{j'})$ and $(b_{j'},a_{\ell})$ contain a run of $K_{i+1}$, and $a_{\ell},b_{j'}\in K_0\setminus K_{i+1}$.
We now show that $d_i$ satisfies the invariant. Most of the work will go in showing that $d_i$ satisfies condition
$(i)$ of the invariant. We show this by contradiction - If $d_i$ does not satisfy invariant $(i)$, we will
exhibit a pair of subgraphs violating the $abab$-free property.

Suppose $d_i$ does not satisfy condition $(i)$ of the invariant, that is, there is a vertex $u\in K_{i+1}\setminus K_0$ 
that lies in $(a_{\ell}, b_{j'})$. 
Since $d_{i-1}$ satisfies both the conditions of the invariant, the subgraph
$K_i$ blocked by $d_{i-1}$ is contained in $K_0$ in $(a_{\ell}, b_j)$. Since $(a_{\ell}, b_{j'})\subset (a_{\ell}, b_j)$, it implies $u\not\in K_i$, and thus $u\in K_{i+1}\setminus K_i$.
By construction of $d_i$, the vertex $b_{j'}\in K_i$, and since $d_i$ blocks $K_{i+1}$, $b_{j'}\not\in K_{i+1}$. 
Thus, $b_{j'}\in K_i\setminus K_{i+1}$.

Now, we claim that $K_{i+1}$ is not blocked by $d_{i-1}$. To see this, since $d_{i-1}$ satisfies condition $(i)$ of
the invariant, for any subgraph $K'$ blocked by $d_{i-1}$, we have that $K'\subseteq K_0$ in $(a_{\ell},b_j)$.
Since $(a_{\ell}, b_{j'})\subset (a_{\ell}, b_{j})$, combined with the facts that
$u\in K_{i+1}\setminus K_0$ and that $u$ lies in $(a_{\ell}, b_{j'})$ implies that $K_{i+1}$ is not
blocked by $d_{i-1}$.
But $K_{i+1}$ is blocked by $d_i$, it follows that there is a vertex $v$ of $K_{i+1}$ in the
arc $(b_{j'},b_j]$.
Note that $v$ need not lie in $K_0$.
However, no vertex in the arc $(b_{j'},b_j]$ lies in $K_i$, since $b_{j'}$ was the first vertex encountered
that was contained in a subgraph blocked by $d_{i-1}$ when traversing the vertices of $B$ in counter-clockwise order
from $b_j$.
Therefore, $v\in K_{i+1}\setminus K_i$. 

Finally, since $d_{i-1}$ satisfies condition $(ii)$ of the invariant, it implies
that there is a vertex $w\in K_i\setminus K_0$ that lies in $(b_j, a_{\ell})$. 
Note that $u\in K_{i+1}\setminus K_0$, $a_{\ell},b_{j'}\in K_0\setminus K_{i+1}$, and $u$ lies in $(a_{\ell}, b_{j'})$.
There is no vertex $x\in K_{i+1}\setminus K_0$ that lies in $(b_{j'},a_{\ell})$, since otherwise we have an $abab$-pattern among $K_{i+1}$ and $K_0$ given by the cyclic sequence of the vertices $u,b_{j'},x,a_{\ell}$.
So, we have 
$K_{i+1}\subseteq K_0$ in
$(b_{j'}, a_{\ell})$ since the arrangement is $abab$-free. 
However, since $(b_{j}, a_{\ell})\subset (b_{j'}, a_{\ell})$ and $w\in K_i\setminus K_0$ lies in $(b_j,a_{\ell})$, it follows that
$w\not\in K_{i+1}$.
Therefore, $w\in K_i\setminus K_{i+1}$.
See Fig.~\ref{fig:nbchord} (b).

From the above arguments, it follows that the subgraphs $K_{i+1}$ and $K_i$ are not $abab$-free, 
as witnessed by the sequence
of vertices $u, b_{j'}, v$ and $w$, a contradiction.
Thus, $d_i$ satisfies condition $(i)$ of the invariant. 
The fact that $d_i$ satisfies condition $(ii)$ of the invariant follows from the fact that $K_0$ is minimal.
Otherwise, $K_{i+1}\subseteq K_0$ in $(a_{\ell}, b_{j'})$ and in $(b_{j'}, a_{\ell})$, and therefore 
$K_{i+1}\subset K_0$.

Since the set of chords is finite, the sequence of chords constructed either ends in a 
non-blocking chord, or we end up with the chord $d=a_rb_s$. We claim
that $d$ must be a non-blocking chord. Suppose $d$ blocks a subgraph $K$. Then, $(a_r, b_s)$ contains
a vertex $u\in K\cap K_0$, as $d$ satisfies invariant $(i)$ and $(ii)$. However, $(a_r, b_s)$ 
does not contain a vertex in $K_0$. 
Therefore, $d$ must be a non-blocking chord.

For any chord $d_0$ joining two disjoint runs of some $K_0$, it can be tested in $O(mn)$ if $d_0$ is a non-blocking chord since any $K\in\KK$ has at most $n$ vertices.
We scan over at most $\binom{n}{2}$ chords, and the lemma ensures one of these chords must be non-blocking. Hence, a non-blocking chord can be computed in $O(mn^3)$ time.
\end{proof}

We are now ready to prove Lemma~\ref{lem:nblock}. We do this by using Lemma~\ref{lem:keylem2} 
to add a non-blocking chord connecting two disconnected components of a subgraph, and then recursively apply Lemma~\ref{lem:keylem2}
to the two resulting cycles and their induced subgraphs.

\nblockchord*

\begin{proof}
For any subgraph $K\in\mathcal{K}$, let $n_K$ denote the number of disjoint runs of $K$ on $C$.
Let 
\begin{align*}
\mathrm{cost}(C,\mathcal{K}) = \sum_{K\in\mathcal{K}} (n_K-1) 
\end{align*}
If $\mathrm{cost}(C,\mathcal{K})=0$, then every subgraph $K\in\mathcal{K}$ consists of one run, 
and therefore $C\cap K$ is connected for each $K\in\mathcal{K}$, and
$D=\emptyset$ suffices. 

Suppose the lemma holds for all $(C',\mathcal{K}')$ with $\mathrm{cost}(C',\mathcal{K}')<N$.
Now, consider an
instance with an embedded cycle $C$ and a set $\KK$ of $abab$-free subgraphs of $C$ such that
$\mathrm{cost}(C,\mathcal{K})=N$.
By Lemma~\ref{lem:keylem2}, there is a non-blocking chord $d=\{x,y\}$ joining two disjoint runs of some subgraph $K_0\in\mathcal{K}$.

The chord $d=\{x,y\}$ divides the cycle $C$ into two arcs, $[x,y]$, and $[y,x]$. 
We construct two disjoint sub-problems on the cycles $C_{\ell}$ and $C_r$ obtained from $C$, where $C_{\ell}$ is obtained by adding the edge
$\{x,y\}$ to the arc $[y,x]$, and $C_r$ is obtained by adding the edge $\{x,y\}$ to the arc $[x,y]$.
The subgraphs in
$C_{\ell}$ and $C_r$ are respectively those induced by $\mathcal{K}$, namely 
$\mathcal{K}_{\ell} =\{K\cap C_{\ell}: K\in\mathcal{K}\}$, 
and $\mathcal{K}_r = \{K\cap C_r:K\in\mathcal{K}\}$. 
Note that $\mathcal{K}_{\ell}$ and $\mathcal{K}_r$ 
are $abab$-free on $C_{\ell}$ and $C_r$, respectively. 
Let $n^{\ell}_K$ and $n^r_{K}$ denote respectively, the
number of runs of $K\in\mathcal{K}$ in $C_{\ell}$ and in $C_r$.
Clearly, $n^{\ell}_{K_0} < n_{K_0}$ and $n^r_{K_0} < n_{K_0}$.
Also, for all other subgraphs $K'\in\mathcal{K}$, $n^{\ell}_{K'}\le n_{K'}$ and $n^r_{K'}\le n_{K'}$,
it follows that $\mathrm{cost}(C_r,\mathcal{K}_r)<\mathrm{cost}(C,\mathcal{K})$ and $\mathrm{cost}(C_{\ell},\mathcal{K}_{\ell})<\mathrm{cost}(C,\mathcal{K})$.

Hence, by the inductive hypothesis on the pair $(C_{\ell},\mathcal{K}_{\ell})$, there exists a set of non-intersecting
chords $D_{\ell}$ such that each $K\in\mathcal{K}_{\ell}$ induces a connected subgraph of $C_{\ell}\cup D_{\ell}$. Similarly, there exists a set of non-intersecting chords $D_r$ 
such that each $K\in\mathcal{K}_r$, induces a connected subgraph of $C_{r}\cup D_{r}$.
Let $D=D_{\ell}\cup D_r\cup d$.
If there is a subgraph $K\in\KK$ such that $K$ induces connected subgraphs on both $C_{r}\cup D_{r}$ and $C_{\ell}\cup D_{\ell}$ but $K$ is not connected on $C\cup D$, then $K\cap \{x,y\}=\emptyset$.
It follows that $K$ is blocked by $d$; a contradiction that $d$ is a non-blocking chord.
Hence, $D$ is a set of non-intersecting chords such that each $K\in\KK$ induces  
a connected subgraph of $(C\cup D)$.

By Lemma~\ref{lem:keylem2}, a non-blocking chord can be computed in $O(mn^3)$ time. Since $C\cup D$ gives an outerplanar graph, $|D|\le n-3$. Hence, the total running time to compute $D$ is $O(mn^4)$.
\end{proof}

\section{Construction of Supports}
\label{sec:constructiongenus}
In this section, we show that for cross-free systems on a graph of genus $g$, there exist primal, dual
and intersection supports of genus at most $g$. While the existence of an intersection support implies the
existence of the primal and dual supports,
the construction of the intersection support uses the
construction of both the primal and dual supports, and hence, we start with their description first.

If a subgraph $H\in\mathcal{H}$
contains an edge $e$, then it contains
both the end-points of $e$. Hence, $\mathcal{H}_e\subseteq\mathcal{H}_v$ for any 
$v\in V$ and $e\sim v$ where $e\sim v$ denotes that the edge $e$ is incident to $v$.
We say that a vertex $v\in V(G)$ is \emph{maximal} if $\depth(v)>\depth(e)$ for all $e\sim v$.
We need the following basic result that follows from vertex bypassing and will be used for inductive arguments for the construction of a supports.

\begin{lemma}
\label{lem:bypassdecrease}
Let $(G,\mathcal{H})$ be a cross-free graph system and 
let $v\in V(G)$ be maximal. 
Let $(G',\mathcal{H}')$ be the graph
system obtained on applying $\VB(v)$. Then, 
$\depth(u_i)<\depth(v)$ and $u_i$ is not a maximal vertex for all $u_i$ sub-dividing the edge
$\{v,v_i\}$ for $v_i\in N(v)$.
\end{lemma}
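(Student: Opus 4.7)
The plan is to track, for each vertex and edge of $G'$ meeting a subdivision vertex $u_i$, precisely which subgraphs of $\mathcal{H}'$ contain it, using the explicit recipe of $\VB(v)$ in Definition~\ref{defn:vertexbypassing}. The two statements of the lemma will then follow by equating depths with cardinalities of the relevant sub-families and invoking the maximality of $v$.

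First I would establish part $(i)$: $\depth(u_i)<\depth(v)$. By Step~\ref{step:two} of Definition~\ref{defn:vertexbypassing}, for $H\in\mathcal{H}_v$ the modified subgraph $H'$ contains $u_i$ if and only if $\{v,v_i\}\in H$, i.e., if and only if $v_i\in V(H)$ (using that the subgraphs are induced). For $H\in\mathcal{H}\setminus\mathcal{H}_v$ the subgraph is unchanged and does not contain the newly added vertex $u_i$. Hence
\[
\mathcal{H}'_{u_i}=\{H':H\in\mathcal{H}_v\cap\mathcal{H}_{v_i}\}=\{H':H\in\mathcal{H}_{e}\}\qquad\text{where }e=\{v,v_i\},
\]
so $\depth(u_i)=\depth(e)$. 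By the assumption that $v$ is maximal, $\depth(e)<\depth(v)$, giving $\depth(u_i)<\depth(v)$.

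Next I would establish that $u_i$ is not maximal in $(G',\mathcal{H}')$ by exhibiting an edge $e'$ of $G'$ incident to $u_i$ with $\depth(e')=\depth(u_i)$. The natural candidate is $e'=\{u_i,v_i\}$, which is an edge of $G'$ since the original edge $\{v,v_i\}$ is subdivided by $u_i$. Because the subgraphs in $\mathcal{H}'$ are induced, $e'\in H'$ iff both $u_i\in V(H')$ and $v_i\in V(H')$. From the characterization above, $u_i\in V(H')$ forces $H\in\mathcal{H}_e$, which in turn forces $v_i\in V(H)=V(H')$ (since $v_i\notin\{v\}$). Hence $\mathcal{H}'_{e'}=\{H':H\in\mathcal{H}_e\}=\mathcal{H}'_{u_i}$, so $\depth(e')=\depth(u_i)$, and therefore $u_i$ is not maximal.

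I do not anticipate a real obstacle: the argument is a direct bookkeeping check against the steps of $\VB(v)$. The one subtlety worth explicit mention is that the chords $D$ added in Step~\ref{step:three} of Definition~\ref{defn:vertexbypassing} contribute new edges incident to $u_i$ of the form $\{u_i,u_j\}$, and any such chord $d$ satisfies $\mathcal{H}'_d\subseteq\mathcal{H}'_{u_i}$ (its depth is at most $\depth(u_i)$ and cannot inflate the depth of the vertex). Thus the chords do not disturb either of the two depth computations above, and the lemma follows.
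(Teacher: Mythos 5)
Your proof is correct and follows essentially the same route as the paper's: identify $\mathcal{H}'_{u_i}$ with $\mathcal{H}_{e_i}$ for $e_i=\{v,v_i\}$, so $\depth(u_i)=\depth(e_i)<\depth(v)$ by maximality of $v$. The only difference is that you make explicit what the paper leaves implicit — that the edge $\{u_i,v_i\}$ witnesses non-maximality of $u_i$, and that the chords from Step~\ref{step:three} cannot affect vertex depths — which are welcome clarifications rather than a different argument.
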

\begin{proof}
Let $e_i=\{v,v_i\}$ be the edge subdivided by the vertex $u_i$.
Since $v$ is maximal, $\HH_{e_i}\subset \HH_v$ i.e., $\depth(e_i)<\depth(v)$.
Also, note that by construction, $\depth(u_i)=\depth(e_i)$.
It follows that $\depth(u_i)<\depth(v)$ and that $u_i$ is not a maximal vertex.
Since $u_i$ was chosen arbitrarily, we have $\depth(u_i)<\depth(v)$ for all $i=1,2,\ldots,deg(v)$.
\end{proof}

\subsection{Primal Support}
\label{sec:primalsupportgenus}
Let $(G,\mathcal{H})$ be a cross-free graph system
of genus $g$ and let $c:V(G)\to\{\R,\B\}$ be any $2$-coloring of the vertices of $G$. In this section, we show that
we can construct a primal support $Q$ for $(G,\HH)$ such that the genus of $Q$ is at most $g$.

We start by showing that if no red vertex is
maximal and the subgraphs in $\mathcal{H}$ are connected (and not necessarily cross-free), then it is easy to construct a 
support.

Our construction of a support in the general case
follows by repeated application of the Vertex Bypassing operation
to a maximal red vertex. We show that each Vertex Bypassing operation 
decreases the number of maximal red vertices, and hence we
eventually obtain a system with no maximal red vertices. While the lemma below
is stated for bounded genus graphs, it works for any hereditary family closed
under edge-contractions.

Before stating the lemma, we introduce the following terminology. A pair of vertices $u,v\in V(G)$ are said to be \emph{twins} if $\mathcal{H}_u=\mathcal{H}_v$. We say that
$u$ and $v$ are \emph{adjacent twins} if in addition $u$ and $v$ are adjacent vertices in $G$.
If $c:V(G)\to\{\R,\B\}$ is a 2-coloring of $V(G)$ and $u,v\in\R(V(G))$ are twins, we say that $u$ and $v$ are \emph{red twins}.

\begin{lemma}
\label{lem:easypsupport2}
Let $(G,\mathcal{H})$ be a graph system of genus $g$ such that each $H\in\mathcal{H}$ is a connected subgraph of $G$.
Let $c:V(G)\to\{\R,\B\}$ be any 2-coloring of $V(G)$
such that no vertex in $\R(V(G))$ is maximal and there are no adjacent twins $u,v\in\R(V(G))$.
Then there is a primal support of genus at most $g$.

\end{lemma}
\begin{proof}
We prove by induction on $|\R(V(G))|$. If $|\R(V(G))|=0$, then $Q=G$ is the desired support.
Suppose the statement holds for any graph system 
satisfying the conditions in the lemma
with less than $k$ red vertices.

Let $(G,\mathcal{H})$ be a graph system with a 2-coloring $c:V(G)\to\{\R,\B\}$ satisfying the conditions of the lemma such that $|\R(V(G))|=k$.

For each vertex $u\in\R(V(G))$, we do the following:
Since $u$ is not maximal, by definition, 
there is an edge $e=\{u,v\}\in E(G)$ such that 
$\mathcal{H}_e =\mathcal{H}_u$.
We choose one such edge $e$ incident to $u$, breaking ties arbitrarily.
Since $\mathcal{H}_e\subseteq\mathcal{H}_v$, it follows
that $\mathcal{H}_u\subseteq\mathcal{H}_v$. We orient this edge from $u$ to $v$. Further, if $v\in\R(V(G))$, then $\mathcal{H}_u\subset\mathcal{H}_v$ since there are no adjacent twins in $\R(V(G))$.

It follows that the set of oriented edges induces a directed acyclic subgraph. Further, since there are no twins in $\R(V(G))$, this directed acyclic graph is an oriented forest. Since no red vertex is maximal, it implies that each tree in the forest has its edges oriented towards the root, and the root must be a blue vertex.
Choose an arbitrary tree in the forest and choose an edge $\{u,v\}$ within the tree, where $v$ is the root of the tree.
Let $G'=G/\{u,v\}$, i.e., the graph obtained by contracting
the edge $\{u,v\}$ with the new vertex taking the color
$c(v)=\B$, since $v$, the root of the tree is colored $\B$.
Let $\HH'$ be the resulting family of subgraphs, where for each $H\in\HH$, we have a subgraph $H'\in\HH'$ of $G'$ induced on the vertices $V(H)\cap V(G')$.

Note that no red vertex is maximal
in $G'$ since each red vertex in $G'$ has an outgoing edge incident on it. Further, there are no
adjacent twins in $G'$ among the red vertices as the edge contraction did not introduce new adjacencies between the red vertices. 
To see this note that the vertex obtained by
contracting the edge $\{u,v\}$ is colored $\B$, and hence, the adjacency among the red vertices
is unchanged. Since there are no adjacent red twins in $G$ it implies that there are no adjacent
twins in $G'$. Finally, note that $G'$ has genus at most $g$ since contracting an edge in $G$ 
preserves the embedding of the resulting graph on the same surface as that of $G$.

Since $G'$ has $k-1$ red vertices, by the inductive hypothesis, there is a primal support $Q$
on the blue vertices of $G'$.
The graph $Q$ is also a support for $(G,\mathcal{H})$ 
since the set of blue vertices in $H$ remains unchanged in $(G',\mathcal{H}')$ for each $H\in\mathcal{H}$.
\end{proof}

Now we prove that given a cross-free graph system $(G,\HH)$ of genus $g$ and a 2-coloring $c:V(G)\to\{\R,\B\}$, there is a primal support of genus at most $g$.
To establish the result, we repeatedly apply edge contractions for adjacent red twins, and use an induction argument on the maximum depth of a maximal red vertex via the applications of the vertex bypassing operation in combination with Lemma \ref{lem:bypassdecrease}.

\primalsupport*
\begin{proof}
If there are adjacent red twins $u$ and $v$, we contract the edge $\{u,v\}$ to a single vertex and assign it color $\R$.
This preserves the cross-freeness among subgraphs in $\HH$.
Since for any $H\in\HH$, the set of vertices in $\B(V(G))\cap V(H)$ remains unchanged, a primal support for the modified system is also a support for the given system $(G,\HH)$.
So, we assume wlog that there is no adjacent red twin.

We prove the result by induction on the maximum depth of a maximal red vertex.
Let $d$ be the maximum depth of a maximal red vertex.
Note that a non-maximal red vertex can have depth more than $d$.
For $i\in\{1,2,\ldots,d\}$, let $S_i\subseteq\R(V(G))$ be the set of maximal red vertices of depth $i$.
Let $d=1$. It follows from the definition of a maximal vertex that for each $v\in S_d$ the subgraph containing $v$ consists of a single vertex $v$.
We remove all such subgraphs from $\HH$, since they do not contain any blue vertex and thus the conditions of support are trivially satisfied for them.
Let $\HH_1=\HH\setminus S_1$.
Then, the graph system $(G,\HH_1)$ satisfies the conditions of Lemma \ref{lem:easypsupport2}, and thus admits a primal support.

Assume the statement holds for any cross-free graph system of genus $g$ with $d<k$, for some positive integer $k\ge 2$.
Let $(G,\mathcal{H})$ be a cross-free graph
system of genus $g$ and $c:V(G)\to\{\R,\B\}$ a 2-coloring such that $d=k$, i.e., the maximum depth of a 
maximal red vertex is $k$.
We apply the process below to obtain a cross-free system $(G',\mathcal{H}')$ such that the maximum depth of a maximal red vertex is at most $k-1$, and there are no adjacent twins among the red vertices. We do this in two steps.

In the first step, we apply vertex bypassing to all the vertices in $S_k$, and we assign color $\R$ to all the newly added vertices during the vertex bypassing process.
Note that if $v\in S_k$, then by Lemma \ref{lem:bypassdecrease} none of the newly added vertices is maximal.
By Lemma \ref{lem:remcrossfree}, the resulting graph system remains cross-free.
In the second step, we repeatedly contract all the adjacent red twins and color the contracted vertex $\R$.
Clearly, the resulting graph system $(G',\HH')$ remains cross-free and does not contain any adjacent red twin.

In $(G',\HH')$, we have $d\le k-1$.
To see this, note that all the vertices of $G$ in $S_k$ have been bypassed. It is possible that during the edge contractions in the second step above, two non-maximal adjacent red twins $x$ and $y$ may result in a maximal red vertex $z$. However, at least one of $x$ and $y$ must be added during the vertex bypassing operation since we assumed that $G$ does not contain adjacent red twins.
By Lemma \ref{lem:bypassdecrease}, $\depth(z)\le k-1$.

Therefore, $(G',\HH')$ is a cross-free graph system of genus $g$ such that the maximum depth of a maximal red vertex is at most $k-1$, and there are no adjacent red twins.
By the induction hypothesis, $(G',\HH')$ admits a primal support $Q$ of genus at most $g$, which is also a primal support for $(G,\HH)$ since the set of blue vertices remains unchanged for each subgraph in $\HH$.
\end{proof}

\subsection{Dual Support}
\label{sec:dualsupportgenus}
In this section, we show that for a cross-free system on a graph of genus $g$, there is a dual support of genus at most $g$.
First, we show that for constructing a dual support, we can assume that there are no two $H, H'\in\mathcal{H}$ such that $V(H)\subseteq V(H')$.

\begin{proposition}
\label{prop:nocontainment}
Let $(G,\mathcal{H})$ be a graph system.
Let $\mathcal{H}'\subseteq\mathcal{H}$ be maximal such that 
$\forall\; H, H'\in\mathcal{H}'$, $H\setminus H'\neq\emptyset$, and that for each $H\in\mathcal{H}\setminus\mathcal{H}'$, $H\subseteq H'$ for some $H'\in\mathcal{H}'$.
If $Q'$ is a dual support for 
$(G,\mathcal{H}')$ then, there is a dual support $Q^*$ for $(G,\mathcal{H})$ such that if $Q'$ has genus $g$, then $Q^*$ has genus $g$.
\end{proposition}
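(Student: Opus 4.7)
The plan is to construct $Q^*$ directly from $Q'$ by attaching each $H\in\mathcal{H}\setminus\mathcal{H}'$ as a pendant vertex. For each such $H$, the hypothesis gives some $H^*\in\mathcal{H}'$ with $H\subseteq H^*$; pick one such $H^*$ arbitrarily, add a new vertex labeled $H$, and join it to $H^*$ by a single edge. Let $Q^*$ be the resulting graph. Since $Q^*$ is obtained from $Q'$ by a sequence of pendant-vertex attachments, and a degree-one vertex can always be drawn inside a face of the current $2$-cell embedding that is incident to its neighbor without crossing an existing edge, any embedding of $Q'$ on a genus-$g$ surface extends to an embedding of $Q^*$ on the same surface. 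This immediately gives the genus bound.

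To verify the dual support property, I would fix an arbitrary $v\in V(G)$ and decompose $\mathcal{H}_v = \mathcal{H}'_v\cup(\mathcal{H}_v\setminus\mathcal{H}')$, where $\mathcal{H}'_v=\mathcal{H}_v\cap\mathcal{H}'$. The induced subgraph of $Q'$ on $\mathcal{H}'_v$ is connected because $Q'$ is a dual support for $(G,\mathcal{H}')$, and this subgraph is also induced in $Q^*$ since the construction only adds vertices and edges. For any $H\in\mathcal{H}_v\setminus\mathcal{H}'$, the chosen attachment $H^*$ satisfies $v\in H\subseteq H^*$, so $H^*\in\mathcal{H}'_v$; the pendant edge $\{H,H^*\}$ therefore connects $H$ to the already-connected piece on $\mathcal{H}'_v$. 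This same observation shows that $\mathcal{H}'_v\ne\emptyset$ whenever $\mathcal{H}_v\ne\emptyset$, ruling out any degenerate case, and hence $Q^*$ restricted to $\mathcal{H}_v$ is connected.

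The construction is elementary, so I do not anticipate a genuine obstacle. The one point that deserves a moment's care is that several pendants may be attached to the same $H^*\in\mathcal{H}'$; but since pendants can be inserted one at a time into any face of the evolving embedding incident to $H^*$, the genus bound survives without issue. A second mild check is that the choice of $H^*$ is well-defined: by the maximality clause, every $H\in\mathcal{H}\setminus\mathcal{H}'$ sits below some element of $\mathcal{H}'$, so a valid attachment always exists.
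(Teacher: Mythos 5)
Your proof is correct, and it rests on the same mechanism as the paper's: attach each subgraph outside $\mathcal{H}'$ as a pendant (degree-one) vertex to a subgraph containing it, observe that pendant attachment preserves the genus, and get connectivity of $Q^*[\mathcal{H}_v]$ from $v\in H\subseteq H^*$, which forces the attachment point into $\mathcal{H}'_v$. The difference is organizational: the paper proves the proposition by induction on the containment poset $(\mathcal{H},\preceq)$, ordered lexicographically by (longest chain length, number of longest chains), peeling off one minimal element of a longest chain at a time and hanging it off an \emph{immediate successor} in the poset --- which may itself lie outside $\mathcal{H}'$ --- so that the non-maximal subgraphs end up attached along a forest of containment chains. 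You instead do the whole attachment in one shot, hanging every $H\in\mathcal{H}\setminus\mathcal{H}'$ directly off some $H^*\in\mathcal{H}'$ with $H\subseteq H^*$, which is exactly what the hypothesis of the proposition supplies. Your version is slightly more direct and matches the statement (a given dual support $Q'$ for the antichain $\mathcal{H}'$ being extended) more literally, avoiding the paper's inductive bookkeeping; the paper's induction buys nothing essential here beyond handling one new vertex at a time. Your two side remarks --- that several pendants may share an attachment vertex, and that $\mathcal{H}'_v\neq\emptyset$ whenever some $H\in\mathcal{H}_v\setminus\mathcal{H}'$ exists --- are exactly the right points to check, and both are handled correctly.
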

\begin{proof} 
Consider the containment order $\mathcal{P}=(\mathcal{H},\preceq)$ where for $H,H'\in\mathcal{H}$, $H\preceq H'$ iff $V(H)\subseteq V(H')$.
Then $\mathcal{P}$ is a partial ordered set.
Let $\ell(\mathcal{P})$ be the 
maximum length of a chain in $\mathcal{P}$, and $c(\mathcal{P})$ be the number of chains of length $\ell(\mathcal{P})$.
We prove by induction on $(\ell(\mathcal{P}),c(\mathcal{P}))$, 
that a dual support $Q'$ for a lexicographically smaller system can be extended to
a dual support $Q^*$ of a larger system so that $Q^*$ has the same genus as $Q'$.
If we show this, then we can remove all containments,
construct a support, and extend it to obtain a dual support for the original system without
increasing the genus.

If $\ell(\mathcal{P})=1$, then the elements in $\mathcal{H}$ are pairwise incomparable.
Therefore, $Q'=Q^*$ is a dual support for $(G,\mathcal{H})$.

Suppose for any $(G'',\mathcal{H}'')$, and containment order $\mathcal{P}''=(\mathcal{H}'',\preceq)$, such that 
$(\ell(\mathcal{P}''),c(\mathcal{P}''))$ is lexicographically smaller than the corresponding pair for the  containment
order $\mathcal{P} = (\mathcal{H},\preceq)$ on $(G,\mathcal{H})$, the statement of the proposition holds.

Consider a longest chain $C$ in $\mathcal{P}$, and let $H$ be the minimum element in $C$.
Let $\mathcal{H}'=\mathcal{H}\setminus\{H\}$. Then, for containment order $\mathcal{P}'$ on $\mathcal{H}'$,
$(\ell(\mathcal{P}'),c(\mathcal{P}'))$ is lexicographically smaller than $(\ell(\mathcal{P}),c(\mathcal{P}))$.
Therefore, by the inductive hypothesis, the statement of the proposition holds for $(G,\mathcal{H}')$, and let
$Q'$ denote its dual support. To obtain a dual support $Q^*$ for $(G,\mathcal{H})$, we add a vertex corresponding 
to $H$. Let $H'$ be an immediate successor of $H$ in $\mathcal{P}$ arbitrarily chosen. Connect $H$ to $H'$.
Since we added a new vertex of degree 1, it follows that if $Q'$ has genus $g$, then $Q^*$ has genus $g$.

To show that $Q^*$ is the desired dual support, let $v$ be a vertex in $G$. If $v\not\in V(H)$, then the fact that
subgraphs $\mathcal{H}_v$ correspond to a connected subgraph in $Q^*$ follows from the fact that $Q'$ is a dual
support for $(G,\mathcal{H}')$. So, consider a vertex $v\in V(H)$. Since $Q'$ is a dual support
for $(G,\mathcal{H}')$, the subgraphs $\mathcal{H}'_v$ induce a connected subgraph in $Q'$.
Since $H'$ is an immediate successor of $H$ in $\mathcal{P}$, $V(H)\subseteq V(H')$, i.e., $\mathcal{H}_v=\mathcal{H}'_v\cup\{H\}$. 
Since $H$ is adjacent to $H'$ in $Q^*$ it follows that the subgraphs $\mathcal{H}_v$ induce a connected subgraph in $Q^*$. Hence, $Q^*$ is the desired dual support for $(G,\mathcal{H)}$.
\end{proof}

We start with a special case of the problem where it is easy to obtain a dual support, 
and we obtain a support for the general instance by reducing it to this special case by repeatedly
applying Vertex Bypassing to a vertex of maximum depth.

An edge $e=\{u,v\}\in E(G)$ is said to be a \emph{special edge} if $\mathcal{H}_e=\emptyset$, but both
$\mathcal{H}_u\neq\emptyset$ and $\mathcal{H}_v\neq\emptyset$.
Since $\mathcal{H}$ is a collection of induced subgraphs
of $G$, an equivalent condition for an edge $e=\{u,v\}$ to be a special edge is that $\mathcal{H}_u\neq\emptyset$, $\mathcal{H}_v\neq\emptyset$, and  $\mathcal{H}_u\cap\mathcal{H}_v=\emptyset$.
Let $\Spl_{\mathcal{H}}(E)$ be the set of special edges in $E(G)$.
A dual support $Q^*$ for  $(G,\mathcal{H})$ satisfies the \emph{special edge property} if 
for each $e=\{u,v\}\in \Spl_{\mathcal{H}}(E)$, there is an edge in $Q^*$ between some $H\in\mathcal{H}_u$ and $H'\in\mathcal{H}_v$.

\begin{lemma}
\label{lem:dualembeddedlem}
Let $(G,\mathcal{H})$ be a graph system of genus $g$ such that  $\depth(v)\le 1$ for each $v\in V(G)$. Then, there is a dual support $Q^*$ of genus $g$ on $\mathcal{H}$ with the special edge property.
\end{lemma}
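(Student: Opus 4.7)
The plan is to construct $Q^*$ by contracting each subgraph $H \in \mathcal{H}$ of the embedded host graph $G$ to a single vertex, and then cleaning up what remains. The crucial enabling observation is that $\depth(v) \le 1$ for all $v$ forces the subgraphs in $\mathcal{H}$ to be pairwise vertex-disjoint, so these contractions act on disjoint connected pieces and are unambiguous.

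Concretely, I would start from a $2$-cell embedding of $G$ on an orientable surface $\Sigma$ of genus $g$, and for each $H \in \mathcal{H}$ contract a spanning tree of $H$ down to a single vertex $v_H$. Since edge contractions do not increase genus, the resulting graph still embeds on $\Sigma$. I would then delete every vertex that comes from a $w\in V(G)$ with $\mathcal{H}_w = \emptyset$, and finally discard self-loops and parallel edges. The resulting simple graph, identified with $\mathcal{H}$ via $v_H \mapsto H$, is $Q^*$; its genus is at most $g$ because it was obtained from an embedding on $\Sigma$ by vertex and edge deletions.

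The two defining properties then fall out almost for free. Because $|\mathcal{H}_v| \le 1$, every set $\mathcal{H}_v$ is empty or a singleton, so $Q^*[\mathcal{H}_v]$ is trivially connected and $Q^*$ is a dual support. For the special edge property, a special edge $e = \{u,v\}$ has a unique $H\in\mathcal{H}_u$ and a unique $H'\in\mathcal{H}_v$ with $H\neq H'$ (since $\mathcal{H}_u\cap\mathcal{H}_v = \emptyset$); the contraction maps $e$ to an edge joining $v_H$ and $v_{H'}$, which survives in $Q^*$. The one point to check carefully in the write-up is that deleting the vertices with $\mathcal{H}_w=\emptyset$ cannot accidentally remove an edge needed by the special edge property; this is automatic because the endpoints of any special edge already lie in contracted vertices $v_H, v_{H'}$ and so are not among the deleted vertices. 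Beyond this bookkeeping, there is no real obstacle: the depth-$1$ hypothesis is strong enough that the construction is essentially forced.
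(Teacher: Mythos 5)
Your proof is correct and takes essentially the same approach as the paper: the paper also collapses each of the (pairwise disjoint, by the depth-one hypothesis) subgraphs to a single vertex via genus-preserving edge contractions — there by repeatedly contracting edges $\{u,v\}$ with $\mathcal{H}_u\subseteq\mathcal{H}_v$, which absorbs the uncovered vertices instead of deleting them as you do — and then removes multi-edges. The verification that the result is a dual support (trivially, since each $\mathcal{H}_v$ is empty or a singleton) and that special edges survive the contraction is the same check in both arguments.
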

\begin{proof}
Each vertex of $G$ has depth at most 1 and therefore, no
two subgraphs in $\mathcal{H}$ share a vertex. We repeatedly
contract each edge $e=\{u,v\}$ such that $\mathcal{H}_u\subseteq\mathcal{H}_v$ until no such edge remains, breaking ties arbitrarily.
Note that this allow to contract edges incident to vertex that is not contained in any subgraph in $\HH$.
Remove multi edges if any, from the graph thus obtained.
The resulting graph $Q^*$ has genus $g$.
We show that $Q^*$ is a desired support.

For each vertex $v\in V(G)$, the
subgraphs containing $v$ trivially induce a connected subgraph
in $Q^*$ since $\depth(v)\le 1$ for all $v\in V(G)$. Consider a special edge $e=\{u,v\}\in E(G)$.
Since each of the two endpoints
belong to a subgraph in $\mathcal{H}$, and $\mathcal{H}_u\cap\mathcal{H}_v=\emptyset$, it implies that
$\mathcal{H}_u\not\subseteq\mathcal{H}_v$ and $\mathcal{H}_v\not\subseteq\mathcal{H}_u$. In the construction
of $Q^*$ therefore, $e$ is not contracted. 
Further, $u$ and $v$ are contracted to vertices
corresponding to the unique subgraph containing $u$ and $v$,
respectively. Therefore, $Q^*$ satisfies the special edge property.
\end{proof}

Now we are ready to prove the main result of this section.

\dualembedded*
\begin{proof}
By Proposition~\ref{prop:nocontainment}, 
we can assume that there are no containments in $\mathcal{H}$. 
Since $(G,\mathcal{H})$ is cross-free,
there exists a cross-free embedding of $G$ on a surface of genus $g$. Consider such an embedding of $G$. We abuse notation
 and also refer to the embedded graph by $G$.

For the graph system $(G,\mathcal{H})$, let $d$ be the maximum
depth of a vertex, and let $n_d$ denote the number of vertices in $G$ of depth $d$. The tuple $(d,n_d)$ induces a lexicographic
order:
$(d,n_d)\prec(d',n_{d'})$ if $d<d'$ or $d=d'$ and $n_d<n_{d'}$.

We prove by induction on the lexicographic order $\prec$.
However, we require a stronger inductive hypothesis that
there is a dual support with the special edge property.

If $d = 1$, then Lemma~\ref{lem:dualembeddedlem} guarantees a dual support satisfying the special edge property.
So suppose $d > 1$. Let $v$ be a vertex of maximum depth in $G$. 
We can assume that $v$ is maximal, i.e., $\mathcal{H}_e\subset\mathcal{H}_v$ for all $e\sim v$. 
Otherwise, if $\mathcal{H}_e=\mathcal{H}_v$ for some edge $e=\{u,v\}$, since $\mathcal{H}_e\subseteq\mathcal{H}_u$, it follows
that $\mathcal{H}_v\subseteq\mathcal{H}_u$. Since
$v$ has maximum depth, it implies $\mathcal{H}_v=\mathcal{H}_u$.
Contracting $e$, we obtain a lexicographically smaller system $(G/e, \mathcal{H})$. Observe that $(G/e,\mathcal{H})$ remains
cross-free since both end-points of the contracted edge $e$
are contained in the same set of subgraphs of $\mathcal{H}$.
By the inductive hypothesis, there is a dual support $Q^*$ with the special edge property 
for $(G/e, \mathcal{H})$. $Q^*$ is also a dual-support for $(G,\mathcal{H})$ since $\mathcal{H}_u=\mathcal{H}_v$. Moreover, $Q^*$ satisfies the special edge property
since the contracted edge $e$ is not a special edge, and
all special edges in $G$ survive in $G/e$.

Therefore, we can assume that for a maximum depth vertex $v$ and each $e\sim v$, $\mathcal{H}_e\subset \mathcal{H}_v$.
We apply \VB$(v)$ to obtain the system $(G',\mathcal{H}')$. By Lemma
~\ref{lem:bypassdecrease}, the new vertices $u_0,\ldots, u_{deg(v)-1}$ 
in $G'$ obtained on applying Vertex Bypassing to $v$ 
have depth at most $d-1$.
Hence, 
$(d',n_{d'}) \prec (d,n_d)$, where $d'$ and $n_{d'}$ are respectively, the
depth of a maximum depth vertex, and the number of vertices of maximum depth in $(G',\mathcal{H}')$.
We can assume that $\mathcal{H}'_{u_i}\neq\emptyset$ for each $i\in\{0,\ldots, \deg(v)-1\}$. Otherwise,
we can contract the edge $\{u_i, u_{i+1}\}$ (with indices taken~$\mathrm{mod}$ $\deg(v)$) as this does not violate the cross-free 
condition on $\mathcal{H}'$. 
Further, there is an injective correspondence between the special edges in $G$ and the special edges in $G'$:
For $i=0,\ldots, deg(v)-1$, if $\{v,v_i\}$ in $G$
is a special edge, the corresponding edge $\{u_i, v_i\}$ 
is a special edge in $G'$.

By Lemma~\ref{lem:keylem}, each subgraph $H\in\mathcal{H}'$ is connected in $G'$,
and by Lemma~\ref{lem:remcrossfree}, $(G',\mathcal{H}')$ is cross-free.
By the inductive hypothesis, there is a dual support $Q^*$ for $(G',\mathcal{H}')$ satisfying the special edge property. 
We show that $Q^*$ is also a support for $(G,\mathcal{H})$. 

For each $u\neq v\in V(G)$, it follows from the inductive hypothesis that $\mathcal{H}_u$ induces a connected subgraph of $Q^*$.
It remains to show that $\mathcal{H}_v$ induces a connected subgraph of $Q^*$.
Let $C$ denote the cycle $(u_0,\ldots, u_{\deg(v)-1},u_0)$ added in \VB$(v)$.
Since we assumed (by Proposition~\ref{prop:nocontainment}) that $\mathcal{H}$ has no containments, it follows that
$\cup_{i=0}^{\deg(v)-1} \mathcal{H}'_{u_i} = \mathcal{H}_v$ as there is no subgraph containing only the vertex $v$.
If none of the edges of $C$ are in $\Spl_{\mathcal{H}'}(E)$, then $\mathcal{H}_v$ is connected since adjacent vertices of $C$ 
share at least one subgraph and by our assumption on applying $\VB(v)$, $\mathcal{H}'_{u_i}\neq\emptyset$ for any $i=\{0,\ldots, deg(v)-1\}$. 
On the other hand, if an edge $e=\{u_i, u_{i+1}\}$ (where indices are taken $\mathrm{mod} \deg(v)$) of $C$ is in
$\Spl_{\mathcal{H}'}(E)$, by the inductive hypothesis, since
$Q^*$ satisfies the special edge property, at least one subgraph from $\mathcal{H}'_{u_i}$ and one subgraph from $\mathcal{H}'_{u_{i+1}}$ are adjacent in $Q^*$.
Since $\cup_{i=0}^{\deg(v)-1}\mathcal{H}'_{u_i}=\mathcal{H}_v$, and $C$ is a cycle,
it follows that $\mathcal{H}_v$ is connected and thus $Q^*$, is the desired dual support for $(G,\mathcal{H})$ that also satisfies the special edge property.
\end{proof}

By the special edge property of a dual support, we make use of the following observation for the construction of an intersection support in the following section.

\begin{observation}\label{obs:depth1adjacency}
Given a graph system $(G,\HH)$, let $Q^*$ be a dual support with the special edge property.
Let $H_u=\{u\}$ and $H_v=\{v\}$ be subgraphs in $\HH$ for some depth 1 vertices $u,v\in V(G)$.
Then we can assume wlog that $u$ and $v$ are adjacent in $G$ if and only if $H_u$ and $H_v$ are adjacent in $Q^*$.
Indeed, if $u,v$ are adjacent in $G$, then $H_u,H_v$ should be adjacent in $Q^*$ by the special edge property. If $u,v$ are not adjacent in $G$ but $H_u,H_v$ are adjacent in $Q^*$, then removing the edge $\{H_u,H_v\}$ from $Q^*$ does not violate the conditions of a dual support with the special edge property.
\end{observation}

\subsection{Intersection Support}
\label{sec:intsupportgenus}
In this section, we show how we can construct an intersection support for a cross-free intersection system $(G,\mathcal{H},\mathcal{K})$.
A vertex $v\in V(G)$ s.t. $\mathcal{H}_v=\emptyset$, but $\mathcal{K}_v\neq\emptyset$ is called a $\mathcal{K}$-vertex, 
i.e., a vertex of $G$ that is contained in one or more subgraphs in $\mathcal{K}$ but none of the subgraphs in $\mathcal{H}$.

Our proof proceeds in three steps: First, we show that if there are no $\mathcal{K}$-vertices in $G$, then a dual support
for the graph system $(G,\mathcal{H})$ is also an intersection support for $(G,\mathcal{H},\mathcal{K})$.
Secondly, if there are $\mathcal{K}$-vertices in $G$, then we apply vertex bypassing so that none of the $\mathcal{K}$-vertices
are maximal w.r.t. $\KK$. We can then add a set of \emph{dummy subgraphs} $\mathcal{F}$ so that there are no $\mathcal{K}$-vertices
in $\mathcal{H}\cup\mathcal{F}$. Now, a dual support $Q^*$ for $(G,\mathcal{H}\cup\mathcal{F})$ is an intersection support
for $(G,\mathcal{H}\cup\mathcal{F},\mathcal{K})$. Finally, since $Q^*$ is an intersection support, 
each $K\in\mathcal{K}$ induces a connected subgraph $\HH_K$ of $Q^*$ (though not necessarily cross-free). However, since no 
$\mathcal{K}$-vertex was maximal, we can show that no $\mathcal{F}$ vertex in $Q^*$ is maximal. 
Hence, we can color the vertices of $Q^*$: vertices corresponding to $\mathcal{F}$ get colored $\R$, and
those corresponding to $\mathcal{H}$ get colored $\B$. By appealing to Lemma \ref{lem:easypsupport2},
we obtain the desired intersection support $\tilde{Q}$.

In the lemma below, observe that we only require
that the subgraphs in $\mathcal{K}$ are connected and
not necessarily cross-free.

\begin{lemma}
\label{lem:conn}
Let $(G,\mathcal{H})$ be a cross-free system of genus $g$ and let $\mathcal{K}$ be a set of connected subgraphs of $G$.
If $G$ does not contain a $\mathcal{K}$-vertex, 
then a dual support 
$Q^*$ for $(G,\mathcal{H})$ of genus $g$ satisfying the special edge property,
is also an intersection support for $(G,\mathcal{H},\mathcal{K})$.
\end{lemma}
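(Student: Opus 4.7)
The plan is to fix an arbitrary $K \in \mathcal{K}$ and show that the vertex set $\mathcal{H}_K = \{H \in \mathcal{H} : V(H) \cap V(K) \neq \emptyset\}$ induces a connected subgraph of $Q^*$. The genus bound is immediate since we are not modifying $Q^*$; the entire content of the lemma is the connectivity claim. The key observation is the decomposition
\[
\mathcal{H}_K \;=\; \bigcup_{v \in V(K)} \mathcal{H}_v,
\]
which lets us reduce the intersection-support condition for $K$ to the dual-support condition at each vertex of $K$, stitched together along the edges of $K$.

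First I would check the vertex-by-vertex pieces. Pick any $v \in V(K)$. Because $G$ contains no $\mathcal{K}$-vertex and $v$ lies in $K \in \mathcal{K}$, we must have $\mathcal{H}_v \neq \emptyset$. Since $Q^*$ is a dual support for $(G, \mathcal{H})$, the set $\mathcal{H}_v$ induces a connected (and nonempty) subgraph of $Q^*$. So each ``piece'' of $\mathcal{H}_K$ attached to a vertex of $K$ is already connected in $Q^*$.

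Next I would handle the edges of $K$. For $e = \{u, v\} \in E(K)$, consider the two possible cases. If $\mathcal{H}_e \neq \emptyset$, then any $H \in \mathcal{H}_e$ is a common vertex of $\mathcal{H}_u$ and $\mathcal{H}_v$ in $Q^*$, so $\mathcal{H}_u \cup \mathcal{H}_v$ induces a connected subgraph of $Q^*$. If instead $\mathcal{H}_e = \emptyset$, then since $\mathcal{H}_u, \mathcal{H}_v \neq \emptyset$ (both endpoints lie in $K$ and there are no $\mathcal{K}$-vertices), the edge $e$ is a special edge of $G$. The hypothesis that $Q^*$ satisfies the special edge property then gives an edge of $Q^*$ between some $H \in \mathcal{H}_u$ and some $H' \in \mathcal{H}_v$, so again $\mathcal{H}_u \cup \mathcal{H}_v$ is connected in $Q^*$.

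Finally I would chain the pieces along a spanning tree $T$ of $K$ (which exists because $K$ is connected). Processing the edges of $T$ in any order and using the two cases above, an easy induction shows that $\bigcup_{v \in V(T)} \mathcal{H}_v$ is connected in $Q^*$; since $V(T) = V(K)$, this is exactly $\mathcal{H}_K$. As $K \in \mathcal{K}$ was arbitrary, $Q^*$ is an intersection support for $(G, \mathcal{H}, \mathcal{K})$. There is no serious obstacle here: the whole argument is essentially a bookkeeping exercise that makes precise the intuition that the ``no $\mathcal{K}$-vertex'' condition, combined with the special edge property, exactly bridges the gap between a dual support and an intersection support.
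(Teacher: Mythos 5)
Your proposal is correct and follows essentially the same route as the paper: nonemptiness of each $\mathcal{H}_v$ from the no-$\mathcal{K}$-vertex assumption, connectivity of each $\mathcal{H}_v$ from the dual support, and bridging across edges of $K$ either by a common subgraph or by the special edge property. The only cosmetic difference is that the paper walks a path in $K$ between two arbitrary members of $\mathcal{H}_K$ while you glue the pieces along a spanning tree, which amounts to the same argument.
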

\begin{proof}
By Theorem~\ref{thm:dualembedded}, there is a dual support $Q^*$ of genus at most $g$ for the graph system $(G,\mathcal{H})$ with the
special edge property.
That is, for any vertex $v\in V(G)$, the subgraphs $\mathcal{H}_v$ induce a connected subgraph of $Q^*$, and 
for any edge $\{u,v\}\in E(G)$ such that $\mathcal{H}_u\neq\emptyset$, $\mathcal{H}_v\neq\emptyset$, and
$\mathcal{H}_{\{u,v\}}=\emptyset$, there is a subgraph $H\in\mathcal{H}_u$ and a subgraph $H'\in\mathcal{H}_v$
such that $H$ and $H'$ are adjacent in $Q^*$.

For any $K\in\mathcal{K}$, let $H, H'\in\mathcal{H}_K$.
We show that there is a path in $Q^*$ between $H$ and $H'$ such that each vertex of this path corresponds to a subgraph in $\mathcal{H}_{K}$.
Let $u\in H\cap K$ and $v\in H'\cap K$. 

Since $K$ is connected, there is a path $P=(u=u_0, u_1,\ldots, u_k=v)$
that lies in $K$.
By assumption, none of the vertices of $P$ is a $\mathcal{K}$-vertex. Since $Q^*$ is a dual support for $(G,\mathcal{H})$,
for any $i\in\{0,\ldots, k\}$, the subgraphs in $\mathcal{H}_{u_i}$ induce a connected subgraph in
$Q^*$. For any edge $\{u_i, u_{i+1}\}$, $i=0,\ldots, k$, either there is a subgraph $H\in\mathcal{H}$
that contains the edge $\{u_i,u_{i+1}\}$, or by the special edge property, there is a subgraph
in $\mathcal{H}_{u_i}$ that is adjacent to a subgraph in $\mathcal{H}_{u_{i+1}}$ in $Q^*$. 
Since $\mathcal{H}_{u_i}\subseteq \mathcal{H}_K$ for each
$i=0,\ldots, k$, there is a path
in $Q^*$ between $H$ and $H'$ consisting only of subgraphs in $\mathcal{H}_K$. 
Since $K$ was chosen arbitrarily, $Q^*$ is an intersection support for $(G,\mathcal{H},\mathcal{K})$.
\end{proof}

Now, we apply vertex bypassing to obtain a graph system where none of the $\mathcal{K}$-vertices are maximal.
For a $\mathcal{K}$-vertex $v$, if an edge $e\sim v$ is such that $\mathcal{K}_v=\mathcal{K}_e$,
we say that $e$ is \emph{full} for $v$. If a $\mathcal{K}$-vertex does not have a full
edge incident on it, then we say that it is maximal w.r.t. $\KK$ subgraphs. In this case, 
$\mathcal{K}_e \subset \mathcal{K}_v $ for all ${e}\sim v$.
Note that a maximum depth $\mathcal{K}$-vertex need not be maximal.
In the following, we repeatedly apply vertex bypassing to a maximal $\mathcal{K}$-vertex of
maximum depth until no $\mathcal{K}$-vertex is maximal. However, we require an additional 
property like in the construction of the primal support, that there are no adjacent twins. 
We recall the definition of twins and adjacent twins here. 
If two $\KK$-vertices $u,v$ of $G$ are contained in the same set of subgraphs of $\KK$, 
they are said to be \emph{twins}. If, in addition, $u$ and $v$ are adjacent, then they are said to
be adjacent twins.

The proof of the lemma below follows from arguments similar to those of Theorem \ref{thm:primalsupport}.

\begin{lemma}
\label{lem:removeMaximal}
Let $(G,\mathcal{H},\mathcal{K})$ be a cross-free intersection system of genus $g$. 
Then, we can modify the arrangement 
to a cross-free arrangement $(G',\mathcal{H},\mathcal{K}')$ such that (i) $G'$ has genus $g$, (ii) no $\mathcal{K}$-vertex in $(G',\HH,\KK')$ is maximal, (iii) no two $\KK$-vertices are adjacent twins, and (iv)
an intersection support for $(G',\mathcal{H}, \mathcal{K}')$ is also an intersection support for $(G,\mathcal{H}, \mathcal{K})$.
\end{lemma}
\begin{proof}
We assume that a cross-free embedding of $(G,\mathcal{H},\mathcal{K})$ is given, and with a slight abuse of notation, we use $G$ to also refer to the embedded graph.
We will modify the cross-free graph system $(G,\HH,\KK)$ to a cross-free graph system $(G',\HH,\KK')$ such that $G'$ has an embedding on the same surface as $G$ does, and that the intersection hypergraph defined by $(G,\HH,\KK)$ is isomorphic to that defined by $(G',\HH,\KK')$.
This will prove $(i)$ and $(iv)$.
Also, in the process of obtaining $(G',\HH,\KK')$, as long as there are adjacent twins among $\KK$-vertices, we contract them to a single vertex.
Thus, we will be done with condition $(iii)$ as well. So, in the rest of the proof, we claim to establish condition $(ii)$ of the lemma.

Let $d$ denote the maximum depth of a maximal $\KK$-vertex in $G$. We prove the argument by induction on $d$.
For $i=1,2,\ldots, d$, let $S_i$ denote the set of maximal $\KK$-vertices of depth $i$.
Suppose $d=1$, and consider any $v\in S_1$.
By the definition of a maximal vertex w.r.t. $\KK$, the subgraph $K\in\KK$ containing $v$, does not contain any other vertex of $G$. 
Thus $\HH_{K}=\emptyset$.
Let $\KK_1\subseteq\KK$ denote the collection of all subgraphs consisting of single vertices in $S_1$, and let $\KK'=\KK\setminus \KK_1$.
Then $(G',\HH,\KK')$ is the required graph system, where $G'=G$.

Let our claim be true for $d<k$ for some positive integer $k\ge 2$.
Consider a cross-free graph system $(G,\HH,\KK)$ and a 2-coloring $c:V(G)\to\{\R,\B\}$ such that $d=k$.
We apply vertex bypassing to all the vertices in $S_k$.
Note that this does not modify any subgraph in $\HH$.
For any pair of adjacent twins created among the $\KK$-vertices, we contract them to a single $\KK$-vertex until there is no such twin.
Let $(G'',\HH,\KK'')$ denote the resulting graph system.
By Lemma \ref{lem:remcrossfree} and the contraction among the adjacent $\KK$-twins, it follows that $(G'',\KK'')$ remains cross-free.
Since for any vertex $v\in V(G)$ that lies in a subgraph $H\in\HH$, the cyclic order of its neighbours is preserved in $G''$, the graph system $(G'',\HH)$ is cross-free.
Therefore, $(G'',\HH,\KK'')$ is a cross-free intersection system.
By the construction of $G''$, note that a subgraph $H\in\HH$ intersects a subgraph $K\in\KK$ in $G$ if and only if $H$ intersects in $G''$ with the representative $K''$ of $K$.
It follows that the intersection hypergraphs defined by the two intersection systems are isomorphic.

By an argument similar to that in the proof of 
Theorem \ref{thm:primalsupport}, the maximum depth of a maximal $\KK$-vertex in $(G'',\HH,\KK'')$ is at most $k-1$.
By the induction hypothesis, $(G'',\HH,\KK'')$ can be modified to a graph system $(G',\HH,\KK')$
satisfying properties $(i)$-$(iv)$ of the lemma.
$(G',\HH,\KK')$ is the required cross-free intersection system since the intersection hypergraph defined by $(G',\HH,\KK')$ is isomorphic to that defined by $(G'',\HH,\KK'')$, which itself is isomorphic to the intersection hypergraph defined by $(G,\HH,\KK)$.
\end{proof}

By Lemma \ref{lem:removeMaximal}, it is sufficient to prove an intersection support for $(G,\HH,\KK)$ when no $\KK$-vertex is maximal, and there is no adjacent twin among $\KK$-vertices.
At each $\mathcal{K}$-vertex $v$, we add a \emph{dummy subgraph} $F_v$.
Let $\mathcal{F}$ denote the set of dummy subgraphs thus added. For the intersection system $(G,\mathcal{H}\cup\mathcal{F},\mathcal{K})$,
we can obtain an intersection support $Q$ by applying Lemma \ref{lem:conn}.
We now show that we can obtain an intersection support for $(G,\mathcal{H},\mathcal{K})$ from $Q$ using Lemma \ref{lem:easypsupport2} and Observation \ref{obs:depth1adjacency}.

\intsupport*
\begin{proof}
If $G$ does not contain any $\mathcal{K}$-vertex, then by Lemma \ref{lem:conn}, we are done.
So, suppose that there are some $\KK$-vertices in $G$.
By Lemma \ref{lem:removeMaximal}, we can assume without loss of generality that no $\KK$-vertex is maximal, and that there are no
adjacent twins among the $\KK$-vertices.

At each $\mathcal{K}$-vertex $v$ of $G$, we add a dummy subgraph $F_v$. Let $\mathcal{F}$ denote the set of dummy subgraphs
added.
Let $\HH'=\HH\cup\mathcal{F}$ and consider the intersection system $(G,\HH',\KK)$. By Lemma \ref{lem:conn}, there is a dual support $Q$ for $(G,\HH')$ with the special edge property such that $Q$ is an intersection support for $(G,\mathcal{H}',\mathcal{K})$, and $Q$ has genus at most $g$.
Since $Q$ is an intersection support, each $K\in\mathcal{K}$ induces a connected subgraph $\HH'_K$ of $Q$. 
Abusing notation, we use the labels of the subgraphs in $\mathcal{H}\cup\mathcal{F}$ to denote their corresponding vertices in $Q$.
Consider a dummy subgraph
$F_u$ corresponding to a $\mathcal{K}$-vertex $u$. Since $u$ is not maximal in $G$ w.r.t. $\KK$, it has a full edge $e=\{u,v\}$ incident to it,
i.e., $\mathcal{K}_u=\mathcal{K}_{e}$.
But, this implies $\mathcal{K}_u\subseteq\mathcal{K}_v$.
Note that for every edge incident to $u$ in $G$, there is a corresponding edge in the dual support $Q$ as a special edge incident to the dummy subgraph $F_u$, since $F_u$ consists of a single vertex $u$, and $|\HH'_u|=1$.
In particular, let $e'$ be the copy of $e$ incident to $F_u$ in $Q$.
If $X\in\HH\cup\mathcal{F}$ is the other end of $e'$, then $\KK_{F_u}\subseteq\KK_X$ in the graph system $(Q,\KK)$ i.e., $F_u$ is not a maximal vertex in $(Q,\KK)$.
Hence, none of the dummy subgraphs $\mathcal{F}$ corresponds to a maximal vertex in $(Q,\KK)$.

Now we assign a 2-coloring to the vertices of $Q$.
Let $c:V(Q)\to\{\R,\B\}$ be such that $c(F)=\R$ for each $F\in\mathcal{F}$, and $c(H)=\B$ for each $H\in\HH$.
To complete the argument, we need to show that there are no adjacent red
twins in the graph system $(Q,\KK)$.
Consider any two red vertices $F_u$ and $F_v$ of $Q$, corresponding to some pair of vertices $u$ and $v$ of G, respectively.
Since $|\HH_u|=1=|\HH_v|$, by Observation \ref{obs:depth1adjacency}, we can assume that $F_u$ and $F_v$ are adjacent in $Q$ if and only if $u$ and $v$ are adjacent in $G$.
If $F_u$ and $F_v$ are not adjacent, then trivially they are not adjacent red twins.
So, suppose they are adjacent.
Then $u$ and $v$ are also adjacent in $G$. By assumption, $G$ does not contain adjacent $\KK$-vertices that are twins, thus $\KK_u\ne\KK_v$.
It follows that in the graph system $(Q,\KK)$, we have $\KK_{F_u}\ne\KK_{F_v}$, and hence, $F_u$ and $F_v$ are not twins in $Q$.
Therefore, $Q$ satisfies the conditions of Lemma \ref{lem:easypsupport2} that no \emph{red} vertex is maximal and each $K\in\mathcal{K}$
induces a connected subgraph of $Q$. Therefore, the graph system $(Q,\KK)$ admits a support $\tilde{Q}$ on $\mathcal{H}$
of genus at most $g$.
Since each $K\in\mathcal{K}$ induces a connected subgraph in $\tilde{Q}$, we have the desired intersection support for $(G,\HH,\KK)$.
\end{proof}

\section{Applications}
\label{sec:applications}
In this section, we describe some applications of the existence of supports to Packing, Covering and Coloring problems. 
For many of these applications, we only require the underlying graph to satisfy conditions that are weaker than that for
a support. To that end, define weak supports and weak bipartite supports.

\smallskip\noindent
{\bf Weak (Bipartite) Supports.}
For a hypergraph $(X,\mathcal{E})$,  a \emph{weak support} is a graph $Q'$ on $X$ s.t. for each hyperedge $E\in\mathcal{E}$ with at least two vertices,
the induced subgraph of $Q'$ on the elements of $E$ is \emph{non-empty}, i.e., it has an edge between some pair of vertices in $E$.
Given a 2-coloring 
$c:X\to\{\R,\B\}$, a \emph{weak bipartite support} is a graph $Q''$ on $X$ s.t. for each hyperedge
$E\in\mathcal{E}$ with vertices of both colors, there is an edge in $Q''$ between a vertex in $E\cap\B(X)$
and a vertex in $E\cap\R(X)$.

It is easy to see that if $Q$ is a support for a hypergraph, then it is also a weak bipartite support. Indeed, for any edge $E$ with vertices of two colors,
since the induced graph is connected, there is an edge between some $E\cap B(V)$ and $E\cap R(V)$. Further, a weak bipartite support is also a weak support. 
However, it is not difficult to construct instances where a sparse weak bipartite support exists, 
but a sparse support does not.

For applications of Hypergraph coloring, we note that a weak support is sufficient, while for 
 many (though not all) of the Packing and Covering problems we consider, a weak bipartite support is sufficient.

\subsection{Algorithms for Packing and Covering Problems}
\label{sec:packcover}

We start with definitions of the problems we consider. 
The input for the problems below is a set $X$ along with a collection of subsets 
$\mathcal{S}$ of $X$.

\begin{definition}[Set Packing] Pick a \emph{largest} sub-collection $\mathcal{S}'\subseteq\mathcal{S}$
s.t. $|\{S\in\mathcal{S}': S\ni x\}|\le 1$ for all $x\in X$.
\end{definition}

\begin{definition}[Point Packing]
Pick a \emph{largest} set $Y\subseteq X$ s.t. for all $S\in\mathcal{S}$,
$|Y\cap S|\le 1$.
\end{definition}

\begin{definition}[Set Cover]
Assuming $\cup_{S\in\mathcal{S}} S = X$,
select a \emph{smallest} sub-collection $\mathcal{S}'\subseteq\mathcal{S}$ such that $\cup_{S\in\mathcal{S}'} S = X$.
\end{definition}

\begin{definition}[Hitting Set]
Select a \emph{smallest} cardinality set $Y\subseteq X$ s.t. 
$Y\cap S\neq\emptyset$, $\forall S\in\mathcal{S}$.
\end{definition}

We also define a generalized version of packing and covering problems. Let
$X$ be a set,
and let $\mathcal{S}$ and $\mathcal{T}$ be two collections of
subsets of $X$. This defines an \emph{intersection hypergraph}
$(\mathcal{S},\{\mathcal{S}_T\}_{T\in\mathcal{T}})$, where 
for $T\in\mathcal{T}$, $\mathcal{S}_T=\{S\in\mathcal{S}:S\cap T\neq\emptyset\}$.

\begin{definition}[Generalized Packing]
The Generalized Packing problem is the Set/Point Packing problem on the intersection hypergraph $(\mathcal{S},\{\mathcal{S}_T\}_{T\in\mathcal{T}})$.
\end{definition}

\begin{definition}[Generalized Covering]
The Generalized Covering problem is the Set Cover/Hitting Set problem on the intersection hypergraph $(\mathcal{S},\{\mathcal{S}_T\}_{T\in\mathcal{T}})$.
\end{definition}

We also consider capacitated variants of the packing problems.

\begin{definition}[Generalized Capacitated Packing]
Given an intersection system $(G,\mathcal{H},\mathcal{K})$ and a capacity function $\cp:\mathcal{K}\to\mathbb{N}$, the goal is to find a largest
sub-collection $\mathcal{H}'\subseteq\mathcal{H}$ s.t. for each $K\in\mathcal{K}$, $|\mathcal{H}'_K|\le\cp(K)$.
\end{definition}

In the generalized version of the problems, taking either the set $\mathcal{S}$, or the set $\mathcal{T}$ as singleton
sets of $X$, we obtain the Set Cover/Set Packing, or Point Packing/Hitting Set problems. Hence, any result obtained 
for the generalized version of the problem applies immediately to the classic versions of the problems. 

We also study some graph problems on an intersection graph defined by a hypergraph.
Let $(X,\mathcal{S})$ be a hypergraph. The \emph{intersection graph} defined by
$\mathcal{S}$ is the graph $I(\mathcal{S})=(\mathcal{S},F)$, where $\{S,S'\}\in F
\Leftrightarrow S\cap S'\neq\emptyset$.

\begin{definition}[Dominating Set] Let $(X,\mathcal{S})$ be a hypergraph. 
For the intersection graph $I(\mathcal{S})$, a set $\mathcal{D}\subseteq\mathcal{S}$ 
is a Dominating Set if for each $S\in\mathcal{S}$, either $S\in\mathcal{D}$
or $S\cap S'\neq\emptyset$ for some $S'\in\mathcal{D}$. The goal is to find a dominating
set of minimum cardinality.
\end{definition}

\begin{definition}[Independent Set] Let $(X,\mathcal{S})$ be a hypergraph. 
For the intersection graph $I(\mathcal{S})$, a set $\mathcal{K}\subseteq\mathcal{S}$ 
is an Independent Set if the sets in $\mathcal{K}$ are pairwise non-adjacent.
The goal is to find an independent set of maximum cardinality.
\end{definition}

\begin{definition}[Vertex Cover] Let $(X,\mathcal{S})$ be a hypergraph. 
For the intersection graph $I(\mathcal{S})$, a set $\mathcal{C}\subseteq\mathcal{S}$ 
is a Vertex Cover if for each edge in $I(\mathcal{S})$ at least one of its end-points is in 
$\mathcal{C}$. The goal is to find a vertex cover of minimum cardinality.
\end{definition}

In abstract set systems, there is essentially no difference between the dual pairs of problems Point Packing/Set Packing, or Hitting Set/Set Cover.
The
Set Packing problem contains
as a special case, the Independent Set problem on graphs, and due to~\cite{hastad1999clique}, the problem is therefore 
hard to approximate beyond $n^{1-\epsilon}$ for any $\epsilon>0$ unless NP$=$ZPP.
Further, the Set Cover problem admits an $O(\log n)$-approximation, see for example~\cite{chvatal1979greedy,lov1975notdef}, 
and that this is tight due to~\cite{feige1998threshold}.

In geometric settings however, a problem and its dual may be quite different. For example,~\cite{mustafa2015quasi} showed that there exists a 
QPTAS\footnote{A QPTAS is an algorithm whose approximation guarantee is the same as a PTAS, but whose running time is allowed to be a quasi-polynomial in the input, i.e., of the form $2^{polylog(n)}$.} for the problem of covering a set of points
in the plane with disks, but such a result is not known for the Hitting Set variant of the problem.

Our results, in combination with previous work, imply PTAS for a general class of geometric hypergraphs
in the plane, or on an oriented surface of bounded genus.

\subsubsection{PTAS via local-search }
\smallskip\noindent
We show that a simple local-search framework leads to PTAS for the packing and covering problems defined above.
We start with a brief description of this technique.\\
{\bf Local-search Framework:}
For a parameter $k\in\mathbb{N}$, the algorithms in the framework have the following form:
We start with an arbitrary feasible solution. While there is a feasible solution of
better value within a \emph{$k$-neighborhood} of the current solution, replace the current solution with this better solution.
When no such improvement is possible, return the current solution. The algorithms guarantee an approximation factor
of $O(1\pm 1/k)$ and have running time $O(N^{1/k^2})$, where $N$ is the input size. This basic framework is a basis for 
several PTAS for geometric packing and covering problems. 
See the following references~\cite{DBLP:conf/walcom/AschnerKMY13, ChanH12, mustafa2010improved} for concrete algorithms and analysis under this framework.

The key to proving that the local-search framework yields a 
PTAS is to show the existence of a graph satisfying the
\emph{local-search property}. Such a graph is called a
\emph{local-search graph}.

\begin{definition}[Local-Search Property]
\label{def:lsp}
For a Packing/Covering problem, let $\mathcal{L}$ denote a solution
returned by the local-search framework and let $\mathcal{O}$ be
an optimal solution. A graph $G=(\mathcal{L}\cup\mathcal{O},E)$
is said to satisfy the \emph{Local-Search Property} if it satisfies the following
\emph{local} and \emph{global} properties.

\begin{enumerate}
\item {\bf Local Property:} Below $N(.)$ refers to the neighborhood in $G$.
\begin{enumerate}
\item (Maximization) For any $\mathcal{O}'\subseteq\mathcal{O}$,
$(\mathcal{L}\cup\mathcal{O}')\setminus N(\mathcal{O}')$ is
a feasible solution.
\item (Minimization) For any $\mathcal{L}'\subseteq\mathcal{L}$,
($\mathcal{L}\setminus\mathcal{L}')\cup N(\mathcal{L}')$ is
a feasible solution.
\end{enumerate}

\item {\bf Global Property:} $G$ comes from a hereditary family of graphs that has sub-linear separators. 
\end{enumerate}
\end{definition}

The local property captures a subset of the local moves, and the global property is used to bound the approximation factor
guaranteed by the algorithm. The argument is roughly as follows.
Since the local-search graph has sub-linear separators, 
we can recursively remove separators in the graph until each
component has size at most $k$, where $k$ is the local-search
parameter. At this point, since our solution is locally optimal,
within each component, the local solution is \emph{better}
than the optimal solution restricted to this component, and
the total number of elements in the separators is small enough
that we can guarantee a PTAS. See~\cite{DBLP:conf/walcom/AschnerKMY13, ChanH12, mustafa2010improved} for a detailed analysis of the technique.

Consider the Set Packing problem for the cross-free system 
$(G,\mathcal{H})$ of genus $g$.
Let $\mathcal{L}$ denote the solution returned by local-search, and let 
$\mathcal{O}$ denote an optimal solution. Assume for simplicity that $\mathcal{L}\cap\mathcal{O}=\emptyset$.
The local property that the local-search graph on $\mathcal{L}\cup\mathcal{O}$ must satisfy is that
for any $\mathcal{O}'\subseteq\mathcal{O}$, $(\mathcal{L}\cup\mathcal{O}')\setminus N(\mathcal{O}')$
is a feasible solution. That is, for each $v\in V$, there
is an edge between some $L\in\mathcal{L}_v$ and some $O\in\mathcal{O}_v$, in other words, 
a weak bipartite support for the dual hypergraph defined by the graph system $(G,\LL\cup\OO)$ where $\mathcal{L}$ and
$\mathcal{O}$ are the two colors.
Similarly, for the Point Packing, Set Cover or
Hitting Set problems, a weak bipartite support of bounded genus yields is a local-search graph.
For the capacitated Packing/Covering problems however, we require a support in order to construct the
desired local-search graphs.

We now construct a local-search graph for the Generalized Capacitated Packing Problem for a
cross-free intersection system $(G,\mathcal{H},\mathcal{K})$ of genus $g$.
The existence of a PTAS follows the general framework
in~\cite{DBLP:journals/dcg/RamanR22}. However, the existence of a suitable local-search graph requires more work.
Let $\mathcal{L}$ and $\mathcal{O}$ be respectively, the solution returned by the local-search algorithm and
an optimal solution. We can assume that $\mathcal{L}\cap\mathcal{O}=\emptyset$. We require that for any
$\mathcal{O}'\subseteq\mathcal{O}$, $(\mathcal{L}\cup\mathcal{O}')\setminus N(\mathcal{O}')$ is a feasible solution.
The problem is that since each $K\in\mathcal{K}$ is a subgraph spanning many vertices, an $L\in\mathcal{L}_K$ and
an $O\in\mathcal{O}_K$ may not share a vertex. So, if we try to use a dual support for $(G,\mathcal{K})$, then we lose
the information that $L$ and $O$ intersect the same $K\in\mathcal{K}$. We can instead start with an intersection
support $\tilde{Q}$ for $(G,\mathcal{K},\mathcal{H})$. Then, for each $K\in\mathcal{K}$, then for any $K\in\mathcal{K}$,
an $L\in\mathcal{L}_K$ and an $O\in\mathcal{O}_K$ share a vertex in $\tilde{Q}$. We can use $\tilde{Q}$ to construct a local-search
graph $\Xi$ on $\mathcal{L}\cup\mathcal{O}$. To prove that the resulting graph has sub-linear sized separators, we can use the
fact that $\tilde{Q}$ has sub-linear sized separators. However, the size of the separator in $\tilde{Q}$ is a function
of $|\mathcal{K}|$, while for $\Xi$, we require a separator whose size is a function of $|\mathcal{L}\cup\mathcal{O}|$. 
Therefore, we first \emph{sparsify} $\mathcal{K}$ to obtain a subset $\mathcal{K}'$, and then construct an intersection support
for $(G,\mathcal{K}',\mathcal{L}\cup\mathcal{O})$. We finally use this intersection support to construct the required local-search
graph.

In particular, we do the following: first, we show that there is a set $\mathcal{K}'\subseteq\mathcal{K}$ of size
$O(|\mathcal{L}\cup\mathcal{O}|)$ that \emph{witness the intersection} of an $L\in\mathcal{L}$
and an $O\in\mathcal{O}$ with a $K\in\mathcal{K}$. This follows from the fact that there is an intersection support
of bounded genus, combined with the Clarkson-Shor technique of~\cite{clarkson1989applications, sharir2003clarkson}. We next use these witness subgraphs
to define a new intersection system and construct
an intersection support for this system. Notably, in this new intersection system, the roles of $\mathcal{H}$ and $\mathcal{K}$
are reversed. Using the intersection support thus created, we are
finally able to construct the local-search graph on $\mathcal{L}\cup\mathcal{O}$, and hence a PTAS follows via the local-search framework.
We start by showing that there is a small witness set $\mathcal{K}'$ for $\mathcal{L}\cup\mathcal{O}$.
\begin{lemma}
\label{lem:DeltaK}
Let $(G,\mathcal{H},\mathcal{K})$ be a cross-free intersection system of genus $g$ such that for all $K\in\mathcal{K}$, $2\le |\mathcal{H}_K|\le\Theta$
for some $\Theta\ge 2$. Then, there is a set $\mathcal{K}'\subseteq\mathcal{K}$ with $|\mathcal{K}'| = O(\Theta\cdot(g+|\mathcal{H}|))$
and s.t. for any pair $H,H'\in\mathcal{H}_K$ for some $K\in\mathcal{K}$, there is a $K'\in\mathcal{K}'$ s.t. $H,H'\in\mathcal{H}_{K'}$.
\end{lemma}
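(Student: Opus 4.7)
The plan is to combine the sparsity of an intersection support (via Theorem~\ref{thm:intsupport}) with a Clarkson--Shor style selection procedure.

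First, I would apply Theorem~\ref{thm:intsupport} to $(G,\mathcal{H},\mathcal{K})$ to obtain an intersection support $\tilde{Q}$ on $\mathcal{H}$ of genus at most $g$. Since bounded-genus graphs are sparse, Euler's formula gives $|E(\tilde{Q})| \le c(g+|\mathcal{H}|)$ for some absolute constant $c$. For each $K\in\mathcal{K}$, the support property says that $\mathcal{H}_K$ induces a \emph{connected} subgraph of $\tilde{Q}$, and the hypothesis $|\mathcal{H}_K|\le\Theta$ bounds its size; in particular, any two $H,H'\in\mathcal{H}_K$ are joined in $\tilde{Q}[\mathcal{H}_K]$ by a path of length at most $\Theta-1$. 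For each $K$, I would fix a spanning tree $T_K$ of $\tilde{Q}[\mathcal{H}_K]$.

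Next, I would build $\mathcal{K}'$ greedily. Enumerate the pairs $\{H,H'\}\in\binom{\mathcal{H}}{2}$ that are co-contained in some $\mathcal{H}_K$. For each such pair not yet covered by the current $\mathcal{K}'$, pick an arbitrary $K\in\mathcal{K}$ with $\{H,H'\}\subseteq\mathcal{H}_K$ and add it to $\mathcal{K}'$. By construction, $\mathcal{K}'$ covers every requested pair; crucially, each $K$ added simultaneously discharges up to $\binom{\Theta}{2}$ future pair-requests from $\binom{\mathcal{H}_K}{2}$.

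To bound $|\mathcal{K}'|$, I would charge each $K$ added to $\mathcal{K}'$ to a canonical edge of $\tilde{Q}[\mathcal{H}_K]$: the first edge $e$ of the $T_K$-path joining the pair $\{H,H'\}$ that triggered the addition. A careful counting argument would then bound the number of charges per edge by $O(\Theta)$: for a fixed edge $e=\{X,Y\}$ of $\tilde{Q}$, any $K$ charged to $e$ must contain both $X$ and $Y$ together with some ``third'' vertex in $\mathcal{H}_K\setminus\{X,Y\}$, and since every pair is charged at most once, distinct charges to $e$ correspond to distinct triggering pairs. Combined with $|E(\tilde{Q})|=O(g+|\mathcal{H}|)$, this would yield $|\mathcal{K}'|=O(\Theta\cdot|E(\tilde{Q})|)=O(\Theta(g+|\mathcal{H}|))$, as required.

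The hard part will be making the per-edge charging bound tight. A priori, a single edge of $\tilde{Q}$ could sit on the canonical path of many $K$'s whose $\mathcal{H}_K$'s span different ``third'' vertices; it is not immediately obvious that the number of such triggering pairs is bounded by $O(\Theta)$ rather than growing with $|\mathcal{H}|$. I expect the cleanest route is to reformulate the charging via a Clarkson--Shor moment estimate on the intersection hypergraph, where the defining set of each pair consists of its two endpoints in $\mathcal{H}$ (of size $2$) and the conflict list is controlled by $\Theta$; the standard moment bound then produces an $O(\Theta)$ blowup on top of the $O(g+|\mathcal{H}|)$ edge count of $\tilde{Q}$.
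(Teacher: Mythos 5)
Your main route has a genuine gap, and it is not just a matter of ``making the per-edge charging bound tight'': that bound cannot be extracted from the facts your charging uses. Your argument relies only on (i) a single intersection support $\tilde{Q}$ for the full system with $O(g+|\mathcal{H}|)$ edges, (ii) connectivity of each $\tilde{Q}[\mathcal{H}_K]$, and (iii) $|\mathcal{H}_K|\le\Theta$. These three properties alone do not imply the lemma. Consider the abstract configuration in which $\tilde{Q}$ is a star with center $X$ and leaves $H_1,\ldots,H_{n-1}$, and for every pair $i<j$ there is a $K_{ij}$ with $\mathcal{H}_{K_{ij}}=\{X,H_i,H_j\}$; here $\Theta=3$ and every $\mathcal{H}_{K_{ij}}$ is connected in $\tilde{Q}$, yet any covering family $\mathcal{K}'$ must contain, for each pair $\{H_i,H_j\}$, some $K$ containing both, and each $K$ covers at most three pairs, so $|\mathcal{K}'|=\Omega(n^2)$. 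In your scheme this is exactly the failure mode you flag: the edge $\{X,H_i\}$ is the first edge of the canonical $T_{K_{ij}}$-path for the pairs $\{H_i,H_j\}$ for all $j$, i.e.\ it receives $\Omega(|\mathcal{H}|)$ charges, each with a different ``third'' vertex, and $\Theta$ does not bound this. So no charging inside one fixed support can succeed; any correct proof must exploit the fact that the cross-free property, and hence Theorem~\ref{thm:intsupport}, is \emph{hereditary} under passing to subfamilies of $\mathcal{H}$ --- that is precisely what rules out the star configuration as a cross-free system of bounded genus.

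This hereditary use of the support theorem is what the paper's proof does, and what your closing Clarkson--Shor remark would have to make precise to close the gap. The paper takes $\mathcal{K}'\subseteq\mathcal{K}$ \emph{minimal} with the covering property, so each $K'\in\mathcal{K}'$ owns a private pair that appears together in no other member of $\mathcal{K}'$, and then samples $\mathcal{H}'\subseteq\mathcal{H}$ keeping each subgraph independently with probability $p=1/\Theta$. With probability at least $p^2(1-p)^{\Theta-2}\ge 1/(e\Theta)^2$ the surviving part of $\mathcal{H}_{K'}$ is exactly the private pair, and in that event the two subgraphs of the pair must be adjacent in an intersection support for the \emph{sampled} system, which by Theorem~\ref{thm:intsupport} and Euler's formula has at most $3(g+|\mathcal{H}'|)$ edges; distinct private pairs give distinct edges, so taking expectations yields $|\mathcal{K}'|/(e\Theta)^2\le 3(g+|\mathcal{H}|/\Theta)$, i.e.\ the claimed bound. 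Your sketch applies the support theorem only once, to the full system, and then hopes a moment bound finishes; the missing and essential ingredients are the re-application of Theorem~\ref{thm:intsupport} to the random subsystem (the ``level-$0$'' objects are pairs that form the \emph{entire} surviving trace of some $K$, and only a support of the sampled system bounds their number) together with the minimality/private-pair device, or an equivalent injective accounting of sampled events into support edges. Without these, the $O(\Theta)$-per-edge claim is simply false, as the star example shows.
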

\begin{proof}
Let $\mathcal{K}'\subseteq\mathcal{K}$ be a minimal subset such that for any $H, H'\in\mathcal{H}_K$ for some $K\in\mathcal{K}$,
there is a $K'\in\mathcal{K}'$ s.t. $H, H'\in\mathcal{H}_{K'}$. By minimality of $\mathcal{K}'$, for any $K'\in\mathcal{K}'$, there
is a pair $H, H'\in\mathcal{H}$ that appear together only in $\mathcal{H}_{K'}$. 

By Theorem~\ref{thm:intsupport}, there is an intersection support $\tilde{Q}$ of genus at most $g$
for $(G,\mathcal{H},\mathcal{K}')$. The construction in Theorem~\ref{thm:intsupport} also produces an embedding of $\tilde{Q}$ on the
surface. We can assume wlog that $\tilde{Q}$ is simple and is $2$-cell embedded.
Since $\tilde{Q}$ has genus at most $g$, by Euler's formula $|E(\tilde{Q})|\le 3(g+|\mathcal{H}|)$.
If a pair of subgraphs $H, H'\in\mathcal{H}_{K'}$ s.t. $|\mathcal{H}_{K'}|=2$, 
then $H$ and $H'$ are adjacent in $\tilde{Q}$
since $\tilde{Q}$ is a support. Therefore, the number of $K'\in\mathcal{K}'$ s.t. $|\mathcal{H}_{K'}|=2$ is at most $3(g+|\mathcal{H}|)$. 

Let $\mathcal{H}'$ be a random subset of $\mathcal{H}$ where each $H\in\mathcal{H}$ is chosen independently with probability
$p=1/\Theta$. Consider a $K'\in\mathcal{K}'$. Since $|\mathcal{H}_{K'}|\le\Theta$, 
the probability that $H, H'\in\mathcal{H}_{K'}$ are the
only subgraphs of $\mathcal{H}_{K'}$ chosen in $\mathcal{H}'$
$p^2(1-p)^{\Theta-2}$, which is at least $1/(e\Theta)^2$. 
The expected number of subgraphs $K'\in\mathcal{K}'$ for which this happens is
$|\mathcal{K}'|/(e\Theta)^2$. 
Since $\E[|\mathcal{H}'|]=|\mathcal{H}|/\Theta$, and there are at most $3(g+|\mathcal{H}|)$ pairs $H, H'\in \mathcal{H}'$ s.t. $\mathcal{H}_{K'}=\{H, H'\}$
for some $K'\in\mathcal{K}'$, by the minimality of $\mathcal{K}'$ it follows that
$|\mathcal{K}'|/(e\Theta)^2\le 3(g+|\mathcal{H}|)/\Theta$. This implies $|\mathcal{K}'|\le 3e^2\Theta\cdot(g+|\mathcal{H}|)=O(\Theta\cdot(g+|\mathcal{H}|))$.
\end{proof}

\crossfreePTAS*

\begin{proof}
Let $\mathcal{L}\subseteq\mathcal{H}$ be a solution returned
by the local-search algorithm
and let $\mathcal{O}\subseteq\mathcal{H}$ be an optimal
solution. Since we are interested in bounding $|\mathcal{L}|$
by $|\mathcal{O}|$, 
we remove the subgraphs in $\mathcal{L}\cap\mathcal{O}$ and replace the capacity
of each $K\in\mathcal{K}$ by its \emph{residual capacity}:
$\cp'(K)=\cp(K)-|\{X:X\cap K\neq\emptyset\mbox{ and }
X\in\mathcal{L}\cap\mathcal{O}\}|$.
We also remove the subgraphs in $\mathcal{K}$ whose residual
capacity is $0$. Let $\mathcal{L}'=\mathcal{L}\setminus(\mathcal{L}\cap\mathcal{O})$ and $\mathcal{O}'=\mathcal{O}\setminus(\mathcal{L}\cap\mathcal{O})$.
If we prove $|\mathcal{L}'|\le(1+\epsilon)|\mathcal{O}'|$, then
it implies that $|\mathcal{L}|\le(1+\epsilon)|\mathcal{O}|$. Hence,
we assume below that $\mathcal{L}\cap\mathcal{O}=\emptyset$, and
each $K\in\mathcal{K}$ has positive capacity bounded above by $\Delta$.

Consider the
intersection system $(G,\mathcal{L}\cup\mathcal{O},\mathcal{K})$. By Lemma~\ref{lem:DeltaK}, there is a collection $\mathcal{K}'\subseteq\mathcal{K}$ 
with $|\mathcal{K}'|=O(\Delta\cdot(g+ |\mathcal{L}\cup\mathcal{O}|))$ s.t.
for any $H, H'\in\mathcal{L}\cup\mathcal{O}$ that belong to $(\mathcal{L}\cup\mathcal{O})_K$ for some $K\in\mathcal{K}$, there is a $K'\in\mathcal{K}'$ s.t.
$H, H'\in(\mathcal{L}\cup\mathcal{O})_{K'}$. 

Now, consider the intersection system $(G,\mathcal{K}',\mathcal{L}\cup\mathcal{O})$. 
By Theorem~\ref{thm:intsupport}, there is an intersection
support $\tilde{Q}$ on $\mathcal{K}'$ 
of genus at most $g$. 
Abusing notation, 
we use $K'$ to also denote the vertex in $\tilde{Q}$ 
corresponding to $K'\in\mathcal{K}'$. 
For each $H\in\mathcal{L}\cup\mathcal{O}$, we use $H$ 
to also denote the induced subgraph of $\tilde{Q}$
defined by the vertices corresponding to subgraphs $K'\in\mathcal{K}'$ s.t. $K'\cap H\neq\emptyset$. 

Let $\Xi=(\mathcal{L}\cup\mathcal{O},E)$ denote the bipartite
intersection graph of $\mathcal{L}$ and $\mathcal{O}$ defined by $\tilde{Q}$.
That is, there is an edge between an $L\in\mathcal{L}$ and
an $O\in\mathcal{O}$ iff there is a $K'\in\mathcal{K}'$
that intersects both $L$ and $O$. In other words, if $L$ and $O$ intersect at a vertex of $\tilde{Q}$.

We claim that $\Xi$ satisfies the local-search condition
for maximization: That is, for any $\mathcal{O}'\subseteq\mathcal{O}$,
the modified solution
$(\mathcal{L}\cup\mathcal{O}')\setminus N(\mathcal{O}')$ is
feasible, where $N(\mathcal{O'})=\cup_{O\in\mathcal{O}'} N_{\Xi}(O)$ is 
the union of neighbors of each $O\in\mathcal{O}'$ in the graph $\Xi$. 
To see this, consider a $K\in\mathcal{K}$. 
If there is an $L\in\mathcal{L}_K$ and
an $O\in\mathcal{O}_K$, then there is a $K'\in\mathcal{K}'$ s.t. $L,O\in(\mathcal{L}\cup\mathcal{O})_{K'}$.
This implies that $L$ and $O$ are adjacent in $\Xi$.
Since $\mathcal{L}$
and $\mathcal{O}$ are feasible, $|\mathcal{L}_K|\le\cp(K)$ and
$|\mathcal{O}_K|\le\cp(K)$. Hence, we added at most $|\mathcal{O}_K|$ subgraphs for $K$, but removed all the subgraphs in $\mathcal{L}_K$. Therefore, 
$(\mathcal{L}\cup\mathcal{O}')\setminus N(\mathcal{O}')$ is feasible.

The graph $\Xi$ may not have bounded genus. Nevertheless,
we show that $\Xi$ has a sub-linear separator. 
Each subgraph $H\in\mathcal{L}\cup\mathcal{O}$ assigns a weight of $1/|H|$ to 
the vertices of $H$ in the intersection support $\tilde{Q}$. 
The weight of a vertex in $\tilde{Q}$ is the sum of the contributions it receives
from the subgraphs containing it. Let $w(K)$ denote the weight of a vertex $K\in\tilde{Q}$
and let $W=|\mathcal{L}\cup\mathcal{O}|$ denote the total weight of the vertices in $\tilde{Q}$. 
Since $\tilde{Q}$ has bounded genus, by Theorem \ref{thm:gilbert},
it has a weighted separator
of size $O(\sqrt{g|\mathcal{K}'|})$. That is, 
a set $S\subseteq V(\tilde{Q})$ of size $O(\sqrt{g|\mathcal{K}'|})$ s.t. $V(\tilde{Q})\setminus S$
separates $\tilde{Q}$ into disconnected components that can be partitioned into two groups
$A$ and $B$ with no edges between them, and that 
$w(A), w(B)\le\alpha W$, for some constant $0<\alpha<1$.

Since $\tilde{Q}$ is an intersection support, each
$L\in\mathcal{L}$ and $O\in\mathcal{O}$ induce a connected
subgraph of $\tilde{Q}$.
Therefore, $S$  yields a separator $S'$ of $\Xi$ consisting
of the subgraphs in $\mathcal{L}\cup\mathcal{O}$ that intersect
the subgraphs corresponding to vertices in $S$. 

Now, we bound $|S'|$.
Consider an arbitrary subgraph $K'\in\mathcal{K}'$.
Since both $\mathcal{L}$ 
and $\mathcal{O}$ are feasible solutions,
$1\le|\mathcal{L}_{K'}|\le\cp(K')$ and $1\le|\mathcal{O}_{K'}|\le\cp(K')$.
Hence, $|\mathcal{L}_{K'}\cup\mathcal{O}_{K'}|\le 2\cp(K')\le2\Delta$.
Therefore, any vertex $K'\in \tilde{Q}$ is contained in 
at most $2\Delta$ subgraphs of $\mathcal{L}\cup\mathcal{O}$.
Hence, $|S'|=O(\Delta\sqrt{g|\mathcal{K'}}|)$. Since $|\mathcal{K}'|=O(\Delta(g+ |\mathcal{L}\cup\mathcal{O}|))$, this implies
that $|S'|=O(\Delta^{3/2}\sqrt{g(g+|\mathcal{L}\cup\mathcal{O}|}))$. 

We finally show that $S'$ is a balanced separator. Since each component of $\tilde{Q}\setminus S$
corresponds to a connected component of $\Xi\setminus S'$, it follows that
$S'$ is a balanced separator since $S$ is a balanced separator.

Since $\Xi$ satisfies both the local-search property and
the sub-linear separator property, it follows
that there is a PTAS via the local-search framework and the framework in~\cite{ChanH12, mustafa2010improved}.
\end{proof}

\packpack*
\begin{proof}
\begin{enumerate}
\item Construct an intersection system $(G,\mathcal{H},\mathcal{K})$, where $\mathcal{K}$ consists of
singleton subgraphs $\{v\}$ for each $v\in V(G)$,
with capacity $\cp(v)$.
The subgraphs in $\mathcal{K}$ are trivially cross-free. Therefore, by Theorem~\ref{thm:crossfreePTAS}, there
is a PTAS for the {\bf Capacitated $\mathcal{H}$-Packing} problem for a cross-free system $(G,\mathcal{H})$ of
genus $g$.
\item Construct an intersection system $(G,\mathcal{V},\mathcal{H})$, where $\mathcal{V}$
consists of singleton subgraphs $\{v\}$ for each $v\in V(G)$, and
each $H\in\mathcal{H}$ has capacity $\cp(H)$. Again, the
intersection system thus defined is cross-free and has genus $g$.
Therefore, by Theorem~\ref{thm:crossfreePTAS}, there is
a PTAS for the {\bf Capacitated Vertex Packing} problem
for $(G,\mathcal{H})$.
\end{enumerate}
\end{proof}

\intcover*
\begin{proof}
Let $\mathcal{L}\subseteq\mathcal{H}$ be a solution returned by local-search and
let $\mathcal{O}\subseteq{\HH}$ be an optimal solution. 
We can assume that $\mathcal{L}\cap\mathcal{O}=\emptyset$.
Otherwise, we can remove the subgraphs in 
$\mathcal{L}\cap\mathcal{O}$, and argue instead
about the subgraphs
in $\mathcal{L}'=\mathcal{L}\setminus(\mathcal{L}\cap\mathcal{O})$
and $\mathcal{O}'=\mathcal{O}\setminus(\mathcal{L}\cap\mathcal{O})$.

Consider the intersection system 
$(G,\mathcal{L}\cup\mathcal{O},\mathcal{K})$.
By Theorem~\ref{thm:intsupport}, there is an intersection
support $\tilde{Q}$ on 
$\mathcal{L}\cup\mathcal{O}$ of genus at most $g$. From $\tilde{Q}$, extract the bipartite
graph $\Xi$ on $\mathcal{L}\cup\mathcal{O}$.
We show that $\Xi$ satisfies the local-search property.

Since $\tilde{Q}$ is a support, for each $K\in\mathcal{K}$, there
is an $L\in\mathcal{L}_K$ adjacent to an $O\in\mathcal{O}_K$ in $\tilde{Q}$,
and hence $L$ and $O$ are adjacent in $\Xi$.
Therefore, for any $\mathcal{L}'\subseteq\mathcal{L}$, $(\mathcal{L}\setminus\mathcal{L}')\cup N(\mathcal{L}')$
is again a feasible solution, where $N(\mathcal{L}')$ refers to the neighborhood of $\mathcal{L}'$ in $\Xi$.
By Theorem \ref{thm:gilbert},
$\tilde{Q}$ has a separator of size $O(\sqrt{g|\mathcal{L}\cup\mathcal{O}|})$, and hence such a separator for $\Xi$.
It follows that $\Xi$ satisfies the local-search Property.
Therefore, there is a PTAS for the Generalized Covering
problem via the local-search framework.
\end{proof}

For the Independent Set and Dominating Set problems, the algorithm follows directly via local-search. Notably, for the
Dominating Set problem, we require an intersection support.
For the Vertex Cover problem, we use the well-known half-integrality due to~\cite{nemhauser1975vertex} of the standard LP-relaxation for Vertex Cover.
That is, there is an optimal solution to the  LP $\min\{\sum x_v: x_u+ x_v\ge 1\;\forall \{u,v\}\in E, x_u\ge 0\}$ where
the $x_u$ take values in $\{0,1/2,1\}$. The vertices whose values are either $1$ or $1/2$ is a feasible vertex cover.
We remove the vertices of weight $0$, take all vertices of weight 1, and in the graph induced on the vertices of weight
$1/2$, it follows that any vertex cover requires at least $1/2$ the vertices. Therefore, a $(1-\epsilon)$-approximation
to the Independent Set problem on this induced subgraph implies a $(1+\epsilon)$-approximation to the Vertex Cover problem.
These ideas have been observed previously by~\cite{bar2011minimum} to obtain a $3/2$-approximation for the Vertex Cover problem
on the intersection graph of axis-aligned rectangles.

\dominatingcrossfree*
\begin{proof}
\begin{enumerate}
\item[$(a)$] The Independent Set problem is equivalent to the Set Packing problem with $\cp(v)=1$ for all $v\in V(G)$ and hence a PTAS follows by Corollary~\ref{cor:packpack}.
\item[$(b)$] For the Vertex Cover problem, we know from the Nemhauser-Trotter theorem as in~\cite{nemhauser1975vertex}
that the standard LP is $1/2$-integral. We only need to find the
subgraphs whose LP-weight is $1/2$. Let $\mathcal{H}'$ be the subgraphs of weight $1/2$. 
For $\mathcal{H}'$ a vertex cover has size at least $|\mathcal{H}'|/2$, since assigning $1/2$ is an optimal solution
for the LP-relaxation on the graph induced on the vertices in $\mathcal{H}'$. Since the complement of a vertex
cover is an independent set, 
using $(a)$ we compute a $(1-\epsilon)$-approximation to the Independent Set problem on $\mathcal{H}'$
and take its complement. This immediately implies a $(1+\epsilon)$-approximation to the Vertex Cover
problem as the solution $VC$ on $\mathcal{H}'$ that is the complement of the $(1-\epsilon)$-approximation to the independent set computed satisfies
$|VC| \le |\mathcal{H}'|-(1-\epsilon)|I|
     \le |\mathcal{H}'|-(1-\epsilon)|\mathcal{H}'|/2$.
This implies that $|VC|\le (1+\epsilon)|\mathcal{H}'|/2\le (1+\epsilon)|VC^*|$.

where $|VC^*|$ denotes the size of an optimal vertex cover for the graph induced on the vertices
of weight $1/2$.
\item[$(c)$]
Consider the cross-free intersection system $(G,\mathcal{H},\mathcal{H})$. 
There is an intersection
support $\tilde{Q}$ of genus at most $g$. 
$\tilde{Q}$ satisfies the local-search conditions and therefore,
the Dominating Set problem on the dual intersection graph $\mathcal{D}$ admits a PTAS
via the local-search framework.
\end{enumerate}
\end{proof}

\subsection{Regions on an Oriented Surface}\label{sec:regionsonsurfaces}
In this section, we give a natural family of geometric hypergraphs that results in cross-free graph systems, and hence the results in the previous section
follow for these hypergraphs.

A simple Jordan curve is the image of an injective map from
$\mathbb{S}^1$ to $\mathbb{R}^2$.  
A set of Jordan curves are a set of \emph{pseudocircles} if 
for any two curves, their boundaries intersect zero or two times.
By the Jordan curve theorem (Ch. 2 in~\cite{Mohar2001GraphsOS}),
a Jordan curve divides the plane into two regions.
For a Jordan curve $\gamma$, we call the
bounded region the \emph{interior} of $\gamma$ and the
unbounded region the \emph{exterior} of $\gamma$.

A set of bounded regions $\mathcal{D}$ is a set of \emph{pseudodisks}
if their boundaries are pseudocircles.
Note that each $D\in\mathcal{D}$ defines a simply
connected region\footnote{A region $R$ is simply connected if any simple loop can be continuously contracted to a point so that
the loop lies in $R$ throughout the contraction process. Intuitively, a simply connected region does not have \emph{holes}.} such that
for any $D,D'\in\mathcal{D}$, $D\setminus D'$ and
$D'\setminus D$ are connected.

If we only insist on the last property, we obtain a set of non-piercing regions, a generalization of pseudodisks.
Recall Definition~\ref{def:npregions} that
a set $\mathcal{D}$ of bounded, connected regions in the plane are a set
of non-piercing regions if for any $D,D'\in\mathcal{D}$,
$D\setminus D'$ and $D'\setminus D$ are connected.
However, we impose the restriction that the boundaries of $D$ and $D'$ intersect a finite number of times for any $D,D'\in\mathcal{D}$. Note that
the sets in $\mathcal{D}$ need not be simply connected.
Fig.~\ref{fig:pseudodisks, piercing and non-piercing} shows examples of pseudodisks,
piercing regions, and non-piercing regions in the plane.
Pseudodisks and non-piercing regions generalize several well-studied geometric objects 
in the plane such as disks, squares, homothets of a convex set, etc. 

\begin{figure}
    \centering
    \includegraphics[width=.85\linewidth]{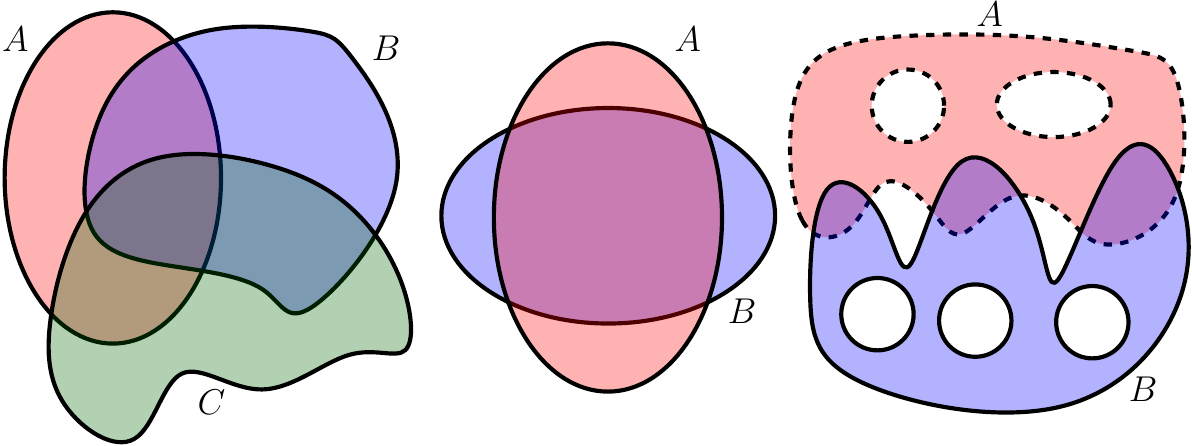}
    \caption{An arrangement of Pseudodisks (left), Piercing (middle), and Non-piercing (right) regions in the plane.}
    \label{fig:pseudodisks, piercing and non-piercing}
\end{figure}

Next, we introduce the intersection hypergraph of regions on a surface. 
The notion is similar to the graph-theoretic notion, but we define it in terms of regions for completeness.

\begin{definition}[Intersection hypergraph of regions] Let $\HH$ and $\KK$ be two collections of regions on an surface.
The intersection hypergraph of the regions is a hypergraph $(\HH,\{\HH_K\}_{K\in\KK})$, i.e., it takes a vertex for each $H\in\HH$, and a hyperedge $\HH_K$ for each $K\in\KK$ where $\HH_K=\{H\in\HH:H\cap K\ne\emptyset\}$.    
\end{definition}

Generalizing previous works~\cite{BasuRoy2018, ChanH12, mustafa2010improved}, the authors in~\cite{RR18} showed that the intersection
hypergraph of non-piercing regions in the plane admits a planar support. 
The results of~\cite{RR18} however, require that the non-piercing regions are in \emph{general position}. That is,
the boundaries of any pair of regions 
intersect only at a finite number of points where they cross, and the boundaries of no three regions intersect at a common point.
In addition, they require that
the points in the input are at some positive distance away from the boundary of the regions. In particular,
the non-piercing assumption does not include regions that share a part of their boundary, or have points on the
boundary of the regions, such as the regions shown in Fig.~\ref{fig:touching}. 
To handle regions of this type, we introduce the notion of \emph{weakly non-piercing regions} defined as follows.

\begin{definition}[Weakly non-piercing regions]\label{def:weaklynp}
A set $\mathcal{R}$ of connected regions on an oriented surface is said to be weakly non-piercing if for each pair of regions $R,R'\in\mathcal{R}$, either $R\setminus R'$ or $R'\setminus R$ is connected.
\end{definition}

In the Conclusion section of~\cite{RR18}, the authors show an example of a set of weakly non-piercing regions in the
plane that does not admit a planar support. 
We show that if we additionally impose the condition 
that the regions are
\emph{simply connected}, 
then there is a support of genus at most $g$.

\begin{figure}[h!]
\begin{center}
    \includegraphics[width=2in]{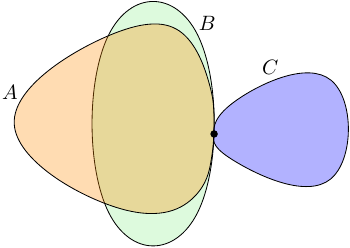}
    \caption{The regions $A$ colored orange, $B$ colored green, and $C$ colored blue are touching. Observe that
$A\setminus B$ is connected, while $B\setminus A$ induces two connected components. There is an input point on the boundary
of the three regions. The regions are weakly non-piercing, but are not non-piercing.}
\label{fig:touching}
\end{center}
\end{figure}

Let $\mathcal{R}$ be a collection of weakly
non-piercing regions on a surface of genus $g$.
Consider the boundaries $\partial R$ and $\partial R'$ of two regions
$R$ and $R'$ in $\mathcal{R}$, respectively. Then, $\partial R\cap\partial R'$ consists of a finite collection of
isolated points and arcs. If at an isolated point, the boundaries of the regions cross, then we call this point
a \emph{crossing point}. Otherwise, we call it a \emph{touching point}. Equivalently, a point $p\in\partial R\cap \partial R'$ is a touching point if in an arbitrarily small ball $B$ around $p$, the boundaries $\partial R$ and
$\partial R'$ can be modified by an arbitrarily small amount so that $\partial R$ and $\partial R'$ are disjoint in $B$.

For an intersection hypergraph defined by weakly non-piercing regions
$\HH$ and $\KK$ on a surface $\Sigma$, we now show that we can construct a graph
$G$ embedded on $\Sigma$, and subgraphs for each $H\in\HH$ and $K\in\KK$ s.t. they induce a cross-free intersection graph system.

\touchcross*
\begin{proof}
Let $\mathcal{R}=\HH\cup\KK$. The arrangement of the regions of $\HH\cup\KK$
partition the surface into \emph{vertices}, \emph{edges} and \emph{cells}.
There are two kinds of vertices in the arrangement: when the boundaries of
two regions cross, the intersection point is called a \emph{crossing vertex}.
For each region $R\in\mathcal{R}$ we subdivide its boundary into 
arcs such that for each arc $a$ on the boundary of $R$, and for any two points $p,q$ in the interior
of $a$, the boundaries of set of regions of $\mathcal{R}$ containing $p$ on its boundary, 
and the set of regions in $\mathcal{R}$ containing
$q$ on its boundary are identical. We
add the end-points of all such arcs $a$ to the set of vertices, and we call
these the \emph{touching vertices}. The arc between two consecutive
vertices on the boundary of a region are the edges of the arrangement. For 
regions in $\mathcal{R}$ not intersected by other regions, the edge contributed
by this region is its boundary.
Finally, the connected regions obtained by removing the vertices and edges
in the arrangement are the cells of the arrangement.

The vertices and edges of the arrangement define a graph $G'$. To obtain
a host graph $G$, we further subdivide each edge of $G'$ by a vertex. 
Next, we put a vertex
in the interior of each cell in the arrangement. We join the vertex corresponding 
to a cell to the vertices on its boundary
via non-crossing simple arcs that lie in the cell. Since each cell is connected, we can do so.
The graph $G$ is clearly embedded on the surface.
See Fig.~\ref{fig:graphofregions} for an example.

Consider a region $R\in\mathcal{R}$. Since $R$ is connected, by construction, it defines
a connected subgraph of $G$. Further, since $R$ is simply connected, its boundary
is a Jordan curve, and by construction is mapped to a simple cycle $C_R$ in $G$ s.t. 
this cycle separates the vertices of $G$ that lie in the subgraph corresponding to $R$
from the vertices of $G$ that do not.

We now show that the regions in $\HH$ induce cross-free subgraphs of $G$.
The fact that the regions in $\KK$ induce cross-free subgraphs of $G$ will then follow analogously. 
Abusing notation, we use $H$ to also denote the subgraph of $G$ 
induced on the vertices in $H$.
For contradiction,
suppose two subgraphs $H,H'\in G$ corresponding respectively, to regions
$H, H'\in\mathcal{H}$ are crossing. 
Then the reduced graph (See Definition~\ref{def:reducedgraph})
$R_G(H, H')$ contains a vertex $v$ and four edges in cyclic order, to vertices $v_1, v_2, v_3, v_4$ s.t. $v_1, v_3\in H\setminus H'$ and $v_2, v_4\in H'\setminus H$.
Since the corresponding regions $H, H'$ are weakly non-piercing, either
$H\setminus H'$ or $H'\setminus H$ is connected. Suppose wlog, the former holds.
Then, in $G$, there is a path $P$ from $v_1$ to $v_3$ that lies entirely in
the subgraph $H\setminus H'$. Adding the edges $\{v_1, v\}$ and
$\{v, v_3\}$ to $P$, we obtain a simple cycle $C$, all of whose vertices lie
in $H$. Since the boundary of the region $H$ defines a simple cycle $C_H$ in $G$ separating
the vertices in $H$ from those that do not lie in $H$, it implies that
$C$ lies in the subgraph defined by $H$.
But this implies that either $v_2$ or $v_4$ also lies in $H$, contradicting
the assumption that $H$ and $H'$ are crossing.
Therefore, the subgraphs defined by regions
in $\mathcal{H}$ induce a cross-free collection of subgraphs in $G$, and
analogously, so do the subgraphs defined by the regions in $\mathcal{K}$. Hence, $(G,\HH',\KK')$ is
a cross-free intersection system, where $\HH'$ and $\KK'$ are respectively,
the subgraphs in $G$ corresponding to the regions in $\HH$ and $\KK$.

Moreover, by construction, any two regions $R_1,R_2\in\HH\cup\KK$ intersect if and only if the corresponding subgraphs $R'_1,R'_2\in\HH'\cup\KK'$ share a vertex in $G$.
This implies the intersection hypergraph defined by $\HH$ and $\KK$, and that defined by $(G,\mathcal{H}',\mathcal{K}')$ are isomorphic.
\end{proof}

\begin{figure}[h!]
\begin{center}
    \includegraphics[width=2in]{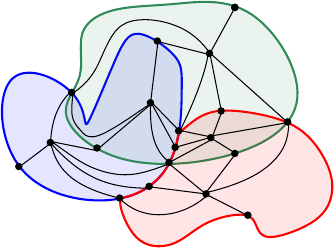}
    \caption{Construction of cross-free graph system for simply connected regions.}
\label{fig:graphofregions}
\end{center}
\end{figure}

By Theorem \ref{thm:touchcross}, all the results for packing and covering problems discussed in Subsection \ref{sec:packcover}, also hold for hypergraphs defined by simply connected weakly non-piercing regions on a surface of bounded genus.

\subsubsection{Hardness}
We believe that the cross-free condition is essential to obtain PTAS for packing
and covering problems when the host graph has bounded genus.~\cite{DBLP:journals/comgeo/ChanG14}
proved that for a hypergraph defined by a set of horizontal and vertical slabs 
in the plane and a set of points $P$, the Hitting Set problem and
the Set Cover problems are  APX-hard. A simple modification of their result implies the following.

\apxhard*
\begin{proof}
The proof follows directly from the corresponding APX-hardness proof of~\cite{DBLP:journals/comgeo/ChanG14}.
We only sketch the modification required.
Consider the Set Cover problem: Given a set of horizontal slabs $H$, a set $V$ of vertical slabs, and a set $P$ of points in the
plane. The authors show that it is APX-hard to select a minimum cardinality subset of $H\cup V$ to cover $P$. 
To obtain the claimed APX-hardness proof on the torus for non-piercing regions, we embed this construction on a torus, and then
modify the boundary of each region in $H$ to be a pair of parallel non-separating closed curves parallel to the hole.
Similarly, we map each vertical slab in $V$ to a region bounded by two parallel non-separating closed curves perpendicular to the hole.

Now, construct the dual arrangement graph $G$ with a representative point for each non-empty cell in the arrangement of the regions,
and let $\mathcal{H}$ denote the set of subgraphs of $G$ defined by the regions $H\cup V$.
In $(G,\mathcal{H})$, the subgraphs are non-piercing,
but are crossing. The APX-hardness of the problem follows from the corresponding result of~\cite{DBLP:journals/comgeo/ChanG14}.

The proof of APX-hardness for the Hitting Set problem for non-piercing crossing subgraphs of a graph follows by a similar modification
of the construction in~\cite{DBLP:journals/comgeo/ChanG14} for the Hitting Set problem with horizontal and vertical slabs in the plane.
\end{proof}

\subsection{Coloring Geometric Hypergraphs}\label{sec:color_geom}
~\cite{KellerS18} showed that the intersection hypergraph
of disks in the plane is 4-colorable i.e., a coloring with 4 colors such that no hyperedge is monochromatic (assuming that the size of each hyperedge is at least two).
This was generalized by~\cite{Keszegh20} for pseudodisks, which was 
further generalized by~\cite{RR18} to show that the intersection hypergraph of non-piercing regions is 4-colorable.
To bound the chromatic number of the hypergraph, suppose we had a 
weak support $G$ corresponding to the vertices of the hypergraph 
whose chromatic number is bounded by $k$. Since a $k$-coloring of $G$
is also a feasible coloring of the hypergraph, it implies that
the hypergraph can also be $k$-colored.
Since any support is also a weak support, 
as a consequence of Theorem~\ref{thm:intsupport} and Theorem~\ref{thm:touchcross},
we obtain the following.

\colorhypergraph*
\begin{proof}
By Theorem \ref{thm:touchcross}, there is a cross-free intersection system $(G,\HH',\KK')$ such that the intersection hypergraph defined by $\HH$ and $\KK$ is isomorphic to that defined by $\HH'$ and $\KK'$.
By Theorem~\ref{thm:intsupport}, $(G,\mathcal{H}',\mathcal{K}')$ has an intersection support $\tilde{Q}$ of genus at most $g$.
Now, $\chi(\tilde{Q})\le \frac{7 + \sqrt{1+24g}}{2}$ (see~\cite{diestel2005graph} for a reference). Since $\tilde{Q}$ is a support,
for each $K\in\mathcal{K'}$, there is an edge between some two subgraphs $H,H'\in {\mathcal{H}'_K}$.
Therefore,
no hyperedge ${\mathcal{H}'_K}$ is monochromatic.
\end{proof}

\section{Conclusion}
\label{sec:conclusion}
In this paper, we studied the problem of constructing primal, dual supports for graph systems $(G,\mathcal{H})$ defined on a host graph
$G$. We also considered the more general problem of constructing a support for an intersection system $(G,\mathcal{H},\mathcal{K})$.
We showed that
if $G$ has bounded genus, then the cross-free property is sufficient to obtain a support of genus at most that of $G$.

There are several intriguing open questions and research directions,
and we mention a few: We do not know if the algorithms to construct a primal, dual or intersection support run in polynomial time.

We obtained PTAS for the Generalized Packing Problems with bounded capacities. 
For Generalized Covering Problem with bounded demands however, we could not obtain a PTAS.~\cite{DBLP:journals/dcg/RamanR22} obtained a PTAS for covering points in the plane whose demands are bounded by a constant, with
non-piercing regions. Combining the results of~\cite{DBLP:journals/dcg/RamanR22} with the Sweeping theorem of~\cite{DBLP:conf/compgeom/DalalG0R24}, we can obtain a PTAS for hitting non-piercing regions in the plane with bounded demands by points from a set $P$.
In the graph setting however, there is no equivalent 
Sweeping Theorem. Developing such a Sweeping Theorem is an interesting direction for future research.

The construction of a support required an embedding of the graph that is cross-free with respect
to the subgraphs. What is the complexity of deciding if a graph of bounded genus has a cross-free embedding with respect to a collection
of subgraphs?
A broader line of research is to obtain necessary and sufficient conditions for a hypergraph to have a \emph{sparse support} - 
where sparsity could be a graph with sublinear-sized separators or even just a graph with a linear number of edges.

\acknowledgements
\label{sec:ack}
Part of this work was carried out when the first author was at LIMOS, Université Clermont Auvergne, and was partially supported by the French government research program “Investissements d’Avenir” through the IDEX-ISITE initiative 16-IDEX-0001 (CAP 20-25).

\bibliographystyle{abbrvnat}
\bibliography{ref}
\label{sec:biblio}

\end{document}